\numberwithin{equation}{section}
\theoremstyle{plain}
\newtheorem{Th}{Theorem}[section]
\newtheorem{Lemma}[Th]{Lemma}
\newtheorem{Pro}[Th]{Proposition}
\newtheorem{Prop}[Th]{Proposition}
\newtheorem{Cor}[Th]{Corollary}
 \theoremstyle{definition}
\newtheorem{Def}[Th]{Definition}
\newtheorem{Rem}[Th]{Remark}
\newtheorem{Ex}[Th]{Example}
\newtheorem{?}[Th]{Problem}
\newcommand{\C}{\mathbb{C}}
\newcommand{\Z}{\mathbb{Z}}\newcommand{\R}{\mathbb{R}}
\newcommand{\la}{\lambda}
\newcommand{\aut}{\operatorname{aut}}
\DeclareMathOperator{\diam}{diam}
\DeclareMathOperator{\vv}{v}
\DeclareMathOperator{\cc}{c}
\DeclareMathOperator{\e}{e}\DeclareMathOperator{\Pm}{pm}\DeclareMathOperator{\ch}{ch}
\DeclareMathOperator{\inj}{inj}
\begin{document}
\author[P.E. Frenkel]{P\'eter E.\ Frenkel}
\address{Alfr\'ed R\'enyi Institute of Mathematics, Hungarian Academy of Sciences \\
1053 Budapest, Hungary \\
Re\'altanoda u. 13-15. \& ELTE E\"{o}tv\"{o}s Lor\'{a}nd University \\ Faculty of Science \\  Institute of Mathematics\\ 1117 Budapest, Hungary
\\ P\'{a}zm\'{a}ny P\'{e}ter s\'{e}t\'{a}ny 1/C }

\email{frenkelp@cs.elte.hu}
\thanks{This project has received funding from  the European Research Council
(ERC) under the European Union's Horizon 2020 research and innovation
program (grant agreement No.\ 648017), from the MTA R\'enyi Lend\"ulet
Groups and Graphs research group, and from the Hungarian National
Research, Development and Innovation Office -- NKFIH, OTKA grants no.\
K104206 and K109684.}

\title{Convergence of graphs with intermediate density}

\begin{abstract} We propose a  notion of graph convergence that interpolates between the Benjamini--Schramm convergence of bounded degree graphs and the dense graph convergence developed  by L\'aszl\'o Lov\'asz 
and his coauthors.
We prove that spectra of graphs, and also some important graph parameters such as numbers of colorings or matchings, behave well in convergent graph sequences. Special attention is given to graph sequences of large essential girth, for which asymptotics of coloring  numbers are explicitly calculated. We also treat numbers of matchings in approximately regular graphs.

We introduce tentative limit objects that we call graphonings because they are common generalizations of graphons and graphings. Special forms of these, called Hausdorff and Euclidean graphonings, involve geometric measure theory. We construct Euclidean graphonings that provide  limits of hypercubes and of finite projective planes, and, more generally, of a wide class of  regular sequences of large essential girth.  For any convergent sequence of large essential girth, we construct   weaker limit objects: an involution invariant probability measure on the sub-Markov space of consistent measure sequences (this is unique), or an  acyclic reversible sub-Markov kernel on a probability space (non-unique). We also pose some open problems.
\end{abstract}

\maketitle
\tableofcontents
\bf Notations and terminology. \rm Graphs are finite, simple, and undirected, unless otherwise specified. 
 On $k$ nodes, the complete graph, cycle, path, and path with a fork at one end   is denoted by $K_k$, $C_k$, $P_k$, and $D_k$, respectively.
For a graph $G=(V(G), E(G))$, we write $\vv (G)=|V(G)|$ and $\e(G)=|E(G)|$. A graph $F$ has $\cc(F)$ connected components, out of which $\cc_{\ge 2}(F)$ have at least two nodes. The neighborhood (i.e., set of neighbors) of  a node $o$ is written $N(o)$.

The number of homomorphisms and injective homomorphisms from $F$ to $G$ is denoted by $\hom(F,G)$ and $\inj(F,G)$, respectively. The number of automorphisms of $F$ is $\aut F$. The symbols $\times$ and $\square$ stand for the categorical (or weak) direct product and the Cartesian sum of graphs, respectively.

The product of $\sigma$-algebras is denoted by $\otimes$. We write a.e.\ for ``almost every(where)'' and a.s.\ for ``almost surely'', i.e., ``with probability 1''. The indicator of an event $A$ is $\mathbb 1_A$.

\section{Homomorphism densities and graph convergence}\label{Def}
The two most developed graph limit theories are the Benjamini--Schramm limit theory of bounded degree graphs and the dense graph limit theory
 developed by Borgs, Chayes,  Lov\'asz, T.\  S\'os, Szegedy, and Vesztergombi. The convergence of dense graphs is defined in terms of 
  homomorphism densities. The convergence of
bounded degree graphs is defined in terms of neighborhood statistics, but this easily translates into convergence of homomorphism frequencies.  We now propose a common generalization that works for both cases and also for intermediate density.
\begin{Def}\label{t} An \it admissible pair \rm is a pair $(G,d)$, where $d\ge 1$ and $G$ is a graph with  all degrees $\le d$. For a connected graph $F$ and an admissible pair $(G,d)$, we define the \it homomorphism density \rm
\[t(F,G,d)=\frac{\hom (F,G)}{\vv (G)d^{\vv (F)-1}}\in [0,1].\] We extend this to arbitrary $F$ by making it multiplicative:
\[t(F,G,d)=\frac{\hom (F,G)}{\vv (G)^{\cc(F)}d^{(\vv -\cc)(F)}}\in [0,1].\]
An \it admissible sequence \rm 
is a  sequence of admissible pairs. 
An admissible sequence $(G_n,d_n)$ is \it convergent \rm if the number sequence $t(F, G_n, d_n)$ converges for any (or, equivalently, any connected) graph $F$.
\end{Def}

\begin{Rem}
Note that \[t(F,G,\vv (G))=\frac {\hom (F,G)}{\vv (G)^{\vv (F)}}=t(F,G)\] is the usual homomorphism density. Thus, a sequence of the form $(G_n,\vv (G_n))$ --- which is always admissible --- is convergent precisely if
$(G_n)$ is a  convergent dense graph sequence.

Note also that for connected $F$ we have \[t(F,G,d)=t^*(F,G)/d^{\vv (F)-1},\] where $t^*(F,G)=\hom (F,G)/\vv (G)$ is the usual homomorphism frequency, so if $d_n=d$ does not depend on $n$, then an admissible  sequence $(G_n,d)$ is convergent precisely if $(G_n)$ is a Benjamini--Schramm convergent graph sequence (alternatively called a locally convergent graph sequence). 

When $F$ is a forest, the normalization used in Definiton~\ref{t} is similar to the one used by Bollob\'as and Riordan~\cite{BR} and by Borgs, Chayes, Cohn, and Zhao~\cite{BCCZ1, BCCZ2}. However, for general $F$, our normalization is quite different. The goal in those papers was to generalize  dense graph convergence to the sparse case, but no attempt was made to also include Benjamini--Schramm convergence in a  unified treatment. In the present approach, both extremes are included as special cases. This is also reflected in the limit objects --- generalized graphons ---, which are $L_p$ graphons in \cite{BCCZ1, BCCZ2} but graphonings in Section~\ref{limobj} of the present paper. Admittedly, the results presented in this paper are less conclusive.
\end{Rem}

\begin{Rem}\label{subgraph} Let $(G,d)$ be an admissible pair. 
Removing an edge from a connected  graph $F$ without destroying connectivity cannot decrease $t(F,G,d)$. Removing a  vertex of degree 1 from a connected graph $F$ cannot either. Thus, we have  $t(F,G,d)\le t(F', G,d)$ if $F'\subseteq F$ are connected graphs. 
\end{Rem}

\begin{Pro}\label{tree} 
  Let $(G,d)$ be an admissible pair. Let $F$ be a graph. 
  Then  we have $t(F,G,d)=1$ if and only if at least one of the following holds.

  \begin{itemize}\item[(a)]
 $F$ is an edgeless graph, or

 \item[(b)]  $F$ is a forest and $G$ is $d$-regular, or

 \item[(c)]  $F$ is bipartite and $G$ is a disjoint union of complete bipartite graphs $K_{d,d}$.
 \end{itemize}
\end{Pro}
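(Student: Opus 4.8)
The plan is to reduce the statement to connected graphs $F$, then cut down to paths and cycles via Remark~\ref{subgraph}, and finish the cyclic case with a short spectral computation for the adjacency matrix $A$ of $G$. Since $\hom(\,\cdot\,,G)$ is multiplicative and $\vv$, $\cc$ are additive over connected components, $t(\,\cdot\,,G,d)$ is multiplicative over the components of $F$; as each factor lies in $[0,1]$, we get $t(F,G,d)=1$ iff $t(F_0,G,d)=1$ for every component $F_0$ of $F$. A one-vertex component contributes the factor $\vv(G)/\vv(G)=1$, so isolated vertices are free, and in particular $t(F,G,d)=1$ whenever $F$ is edgeless --- alternative (a). It remains to decide, for a connected $F$ with at least one edge, when $t(F,G,d)=1$, and then to reassemble.

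Assume $F$ is connected and has an edge. Then $P_2\subseteq F$, so Remark~\ref{subgraph} gives $t(F,G,d)\le t(P_2,G,d)=2\e(G)/(\vv(G)d)$; since $2\e(G)=\sum_v\deg v\le\vv(G)d$ with equality exactly when $G$ is $d$-regular, $t(F,G,d)=1$ forces $G$ to be $d$-regular. Conversely, if $F$ is a tree and $G$ is $d$-regular, rooting $F$ and building a homomorphism one leaf at a time (each step having exactly $d$ choices) gives $\hom(F,G)=\vv(G)d^{\vv(F)-1}$, i.e.\ $t(F,G,d)=1$. With the reduction above this settles forests and yields (b): if $F$ has an edge and all its components are trees, then $t(F,G,d)=1$ iff $G$ is $d$-regular.

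Now let $F$ be connected and contain a cycle, and let $\la_i$ be the eigenvalues of $A$ (so $|\la_i|\le d$ and $\sum_i\la_i^2=\operatorname{tr}(A^2)=2\e(G)\le\vv(G)d$). If $F$ is not bipartite it contains an odd cycle $C_{2k+1}$ as a connected subgraph, so by Remark~\ref{subgraph} $t(F,G,d)\le t(C_{2k+1},G,d)=\operatorname{tr}(A^{2k+1})/(\vv(G)d^{2k})$; but $\operatorname{tr}(A^{2k+1})\le\sum_i|\la_i|^{2k+1}\le d^{2k-1}\sum_i\la_i^2\le\vv(G)d^{2k}$, and equality would force each $\la_i\in\{0,d\}$, hence (as $\operatorname{tr}(A)=0$) $A=0$, contradicting $\operatorname{tr}(A^{2k+1})=\vv(G)d^{2k}>0$. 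So $t(C_{2k+1},G,d)<1$ and $F$ is bipartite. A bipartite graph with a cycle contains an even cycle $C_{2k}$, $k\ge2$, and the analogous estimate $\operatorname{tr}(A^{2k})\le d^{2k-2}\sum_i\la_i^2\le\vv(G)d^{2k-1}$ shows that $t(C_{2k},G,d)=1$ forces $2\e(G)=\vv(G)d$, i.e.\ $G$ is $d$-regular, and every $\la_i\in\{-d,0,d\}$. For a connected component $H$ of $G$ (again $d$-regular, spectrum in $\{-d,0,d\}$), $d$ is a simple eigenvalue, $-d$ occurs iff $H$ is bipartite, and the rest are $0$; since $\operatorname{tr}(A_H)=0$ and $d\ge1$, $H$ must be bipartite, and then $\operatorname{tr}(A_H^2)=2d^2=2\e(H)$ gives $\e(H)=d^2$, so $\vv(H)=2d$ with both colour classes of size $d$, forcing $H=K_{d,d}$. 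Hence $G$ is a disjoint union of, say, $m$ copies of $K_{d,d}$. Conversely, if $F$ is bipartite and $G$ is such a union, a homomorphism from a connected component $F_0$ of $F$ with an edge lands in one $K_{d,d}$ and amounts to choosing one of the two proper $2$-colourings of $F_0$ and then an arbitrary vertex of the prescribed side for each vertex of $F_0$, so $\hom(F_0,K_{d,d})=2d^{\vv(F_0)}$ and $t(F_0,G,d)=(m\cdot2d^{\vv(F_0)})/(2dm\cdot d^{\vv(F_0)-1})=1$; thus $t(F,G,d)=1$, which is (c).

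Reassembling: if $t(F,G,d)=1$ and $F$ has an edge, then either all nontrivial components of $F$ are trees, forcing $G$ to be $d$-regular (case (b)), or some component contains a cycle; by the previous paragraph this component must be bipartite and $G$ must be $\bigsqcup K_{d,d}$, and since every cyclic component of $F$ is then bipartite while tree components are automatically bipartite, $F$ is bipartite (case (c)). The reverse implications (a), (b), (c) $\Rightarrow t(F,G,d)=1$ were all verified above. I expect the main obstacle to be the equality analysis for even cycles: extracting $\operatorname{spec}(G)\subseteq\{-d,0,d\}$ from $t(C_{2k},G,d)=1$ and then recovering $G=\bigsqcup K_{d,d}$ from regularity together with the spectrum; the reductions to $P_2$ and to short cycles via Remark~\ref{subgraph}, and the leaf-by-leaf count for trees, are routine.
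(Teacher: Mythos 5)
Your proof is correct, and for the crux of the argument --- ruling out odd cycles and pinning down $G$ when $F$ contains an even cycle --- it takes a genuinely different route from the paper. The reductions (multiplicativity over components, $t(K_2,G,d)=1\Leftrightarrow G$ is $d$-regular via Remark~\ref{subgraph}, the leaf-by-leaf count for trees, and the direct computation $\hom(F_0,K_{d,d})=2d^{\vv(F_0)}$) match the paper's. But where you write $t(C_m,G,d)=\operatorname{tr}(A^m)/(\vv(G)d^{m-1})$ and run an equality analysis on the chain $\operatorname{tr}(A^{2k})\le d^{2k-2}\sum_i\lambda_i^2\le \vv(G)d^{2k-1}$ --- concluding $\operatorname{spec}(G)\subseteq\{-d,0,d\}$ and then invoking Perron--Frobenius, $\operatorname{tr}(A_H)=0$, and $\operatorname{tr}(A_H^2)=2\e(H)$ to force each component to be $K_{d,d}$ --- the paper argues combinatorially: from $t(C_m,G,d)=t(P_m,G,d)$ it deduces $\hom(C_m,G)=\hom(P_m,G)$, so every walk of length $m-1$ has adjacent endpoints; for odd cycles this is contradicted by a closed walk of even length (no loops), and for even cycles it propagates from length $2k-1$ down to length $3$ and then to all odd lengths, forcing a connected $d$-regular $G$ to be complete bipartite. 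Both arguments are short and complete; the paper's needs no linear algebra, while yours isolates the spectral content (equality cases in trace inequalities plus the standard facts that $d$ is simple and $-d$ detects bipartiteness for connected $d$-regular graphs) and is arguably more systematic in how it extracts $G\simeq\bigsqcup K_{d,d}$ from regularity plus the spectrum. One cosmetic remark: your equality analysis for the odd cycle in fact yields $\lambda_i\in\{0,d\}$ only after also using termwise equality in $\operatorname{tr}(A^{2k+1})\le\sum_i|\lambda_i|^{2k+1}$ (which forces $\lambda_i\ge 0$); you use this implicitly and correctly, but it is worth stating.
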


\begin{proof}  We may assume that $F$ and $G$ are connected.

If any of (a), (b), (c) holds, then an easy induction on $\vv(F)$ shows that $\hom(F,G)=\vv(G) d^{\vv(F)-1}$ and the claim follows.

For the converse, assume that
$t(F,G,d)=1$.

  If (a) does not hold, then $F$ contains $K_2$ as a subgraph, thus $2\e (G)/(\vv(G)d)=t(K_2,G,d)\ge  t(F,G,d)=1$ and therefore $G$ is $d$-regular.

 If $F$ contains an odd cycle $C_{2k+1}$, then consider the path  $P_{2k+1}=C_{2k+1}-e$ for an edge $e\in E(C_{2k+1})$. We have $$t(C_{2k+1}, G, d)=1=t(P_{2k+1}, G,d),$$
thus  $$\hom(C_{2k+1}, G)=\hom(P_{2k+1}, G).$$
 But there exists a  homomorphism $\phi:P_{2k+1}\to G$ such that images of the two endnodes coincide. Such a  $\phi$ does not extend to $C_{2k+1}$ because $G$ has no loops. This contradiction proves that $F$ is bipartite.

 If $F$ contains an even cycle $C_{2k}$ for some $k\ge 2$, then a similar argument shows that in $G$, the two endnodes
 of any walk of length $2k-1$ are joined by an edge. It follows that this holds for 3 in place of $2k-1$, and thus for  any odd length as well.
 But $G$ has no loops, so it must be bipartite. It is connected, so it is a complete bipartite graph. It is $d$-regular, so $G\simeq K_{d,d}$.
\end{proof}

\begin{Ex}\label{dirprod}
Let $(\Gamma_i, \delta_i)$ be admissible pairs ($i=1,2,\dots$). Set $G_n=\Gamma_1\times \cdots\times\Gamma_n$ and $d_n=\delta_1\cdots\delta_n$. Then the sequence $(G_n, d_n)$ is convergent. The homomorphism density $t(F,G_n, d_n)$ converges to $\prod_{i=1}^\infty t(F,\Gamma_i,\delta_i)$.
\end{Ex}

\begin{proof}
We have $\hom(F,G_n)=\prod_{i=1}^n\hom(F,\Gamma_i)$ and  $\vv(G_{ n})=\prod_{i=1}^n\vv(\Gamma_i)$, whence $$t(F,G_{n}, d_n)=\prod_{i=1}^nt(F,\Gamma_i,\delta_i).$$ This is decreasing and therefore convergent as $n\to\infty$.
\end{proof}

\begin{Cor}
Let $(\Gamma, \delta)$ be an admissible pair. Then the sequence $(\Gamma^{\times n}, \delta^n)$ is convergent. The homomorphism density $t(F,\Gamma^{\times n}, \delta^n)$ converges to $1$ if $t(F,\Gamma,\delta)=1$ and to zero otherwise.
\end{Cor}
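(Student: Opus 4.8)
The plan is to obtain this as an immediate specialization of Example~\ref{dirprod}. I would take the constant admissible sequence given by $(\Gamma_i,\delta_i)=(\Gamma,\delta)$ for every $i=1,2,\dots$. With this choice the graphs and degree bounds produced in Example~\ref{dirprod} become $G_n=\Gamma_1\times\cdots\times\Gamma_n=\Gamma^{\times n}$ and $d_n=\delta_1\cdots\delta_n=\delta^n$, so the hypotheses and construction of that example apply verbatim; in particular each $(\Gamma^{\times n},\delta^n)$ is an admissible pair and $(\Gamma^{\times n},\delta^n)$ is an admissible sequence.

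Next I would read off the conclusion of Example~\ref{dirprod}: for every graph $F$,
\[
t(F,\Gamma^{\times n},\delta^n)=\prod_{i=1}^{n}t(F,\Gamma,\delta)=t(F,\Gamma,\delta)^{n}.
\]
Example~\ref{dirprod} already asserts that this number sequence converges (it is nonincreasing and bounded below by $0$), so $(\Gamma^{\times n},\delta^n)$ is a convergent admissible sequence by Definition~\ref{t}. It then remains only to identify the limit. Writing $x=t(F,\Gamma,\delta)$, we have $x\in[0,1]$ by Definition~\ref{t}, and for such $x$ the sequence $x^{n}$ tends to $1$ when $x=1$ and to $0$ when $x<1$; since the latter case is exactly $t(F,\Gamma,\delta)\ne 1$, this is precisely the claimed dichotomy. (If desired, one can make the statement fully explicit by combining it with Proposition~\ref{tree}, which characterizes the graphs $F$ for which $t(F,\Gamma,\delta)=1$.)

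There is essentially no obstacle here: the corollary is a direct instance of Example~\ref{dirprod} followed by the elementary behavior of geometric sequences. The only points one must not skip are that $t(F,\Gamma,\delta)\in[0,1]$ (needed for the clean $0$/$1$ split of the limit, and guaranteed by Definition~\ref{t} or Remark~\ref{subgraph}) and that the displayed identity $t(F,\Gamma^{\times n},\delta^n)=t(F,\Gamma,\delta)^{n}$ is already multiplicative over the connected components of $F$, so no separate argument for disconnected $F$ is required.
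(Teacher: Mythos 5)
Your proof is correct and is exactly the intended derivation: the paper states this Corollary immediately after Example~\ref{dirprod} with no separate proof, precisely because it is the constant-sequence specialization $t(F,\Gamma^{\times n},\delta^n)=t(F,\Gamma,\delta)^n$ followed by the trivial dichotomy for geometric sequences with ratio in $[0,1]$. Your added remarks on the $[0,1]$ bound and on multiplicativity over components are accurate but not points of divergence from the paper.
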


\begin{Ex}\label{smallcomp}
 If $G$ is a disjoint union of graphs  $G^i$  ($i=1,\dots, \vv(G)/d$) of size $d$, then $$t(F,G,d)=\frac d{\vv(G)}\sum_{i=1}^{\vv(G)/d}t(F,G^i)$$ for all connected $F$.  We can think of each $G^i$ as  a point in the compact graphon space $\widetilde{\mathcal W
    }_0$ of L.\ Lov\'asz and B.\ Szegedy~\cite{L, LSz}, and consider the uniform probability measure on these $\vv(G)/d$ points. We can think of $\widetilde{\mathcal W
    }_0$ as sitting in $[0,1]^\infty$, each graphon $W$ being represented by its profile of homomorphism densities $t(F,W)$ with connected $F$. A sequence $(G_n, d_n)$,  such that $G_n$ is a disjoint union of graphs of size $d_n$, is convergent if and only if the barycenters of the corresponding probability measures form a convergent sequence.  This is strictly weaker than the weak convergence of the probability measures themselves. If $(G_n,d_n)$ converges, then the limit can be represented by the limiting barycenter (which is unique), or any subsequential weak limit measure (which is non-unique in general, but each one has the correct barycenter).
\end{Ex}
Further examples of convergent sequences are regular sequences of large essential girth, such as hypercube graphs, large grid graphs, incidence graphs of finite projective spaces, and suitable random nearly regular graphs.  See Subsections~\ref{REGULAR} and \ref{GIRTH}.

\subsection{Injective homomorphism densities} It is sometimes useful to count injective, rather than arbitrary,  homomorphisms. We introduce injective homomorphism densities. Even in the dense case, our normalization deviates slightly from the standard one in Lov\'asz's monograph~\cite{L}.

\begin{Def} Let 
 $(G,d)$ be an admissible  pair. 
 For a connected graph $F$, we define the \it injective homomorphism density \rm
\[t_{\inj}(F,G,d)= \frac{\inj (F,G)}{\vv (G)d(d-1)^{\vv (F)-2}}\in [0,1]\] unless $F$ is a  single point, in which case $t_{\inj}(F,G, d)=1$. We extend this to arbitrary $F$ by making the denominator multiplicative:
\[t_{\inj}(F,G,d)= \frac{\inj (F,G)}{\vv (G)^ {\cc(F)}d^{\cc_{\ge 2}(F)}(d-1)^{(\vv-\cc-\cc_{\ge 2}) (F)}}\le \prod_i t_{\inj}(F_i,G,d),\] where the $F_i$ are the connected components of $F$.
\end{Def}

\begin{Rem}\label{subgraphinj}  Let $d> 1$ and let $(G,d)$ be an admissible  pair. Removing an edge from a connected  graph $F$ without destroying connectivity cannot decrease $t_{\inj}(F,G,d)$. Removing a  vertex of degree 1 from a connected graph $F$ cannot either. Thus, $t_{\inj}(F,G,d)\le t_{\inj}(F', G,d)$ if $F'\subseteq F$ are connected graphs.
\end{Rem}

\begin{Pro}\label{homversusinj} For any fixed connected graph  $F$, we have \[t(F,G,d)-t_{\inj}(F,G,d)=O(1/d)
,\] where the constant in the $O$ depends only on $F$.
\end{Pro}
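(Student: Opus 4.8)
The plan is to compare $\hom(F,G)$ and $\inj(F,G)$ directly, by classifying homomorphisms according to the partition of $V(F)$ induced by which vertices get identified. Write $\hom(F,G)=\sum_{\pi}\inj(F/\pi,G)$, where $\pi$ ranges over all partitions of $V(F)$ such that $F/\pi$ is a well-defined simple graph (no partition block contains two adjacent vertices of $F$, and the quotient is taken with multiple edges collapsed), and $F/\pi$ denotes that quotient. The partition $\pi=\hat 0$ into singletons contributes exactly $\inj(F,G)$, so it suffices to show that every coarser partition contributes $O(\vv(G)d^{\vv(F)-2})$, i.e.\ a factor $d$ less than the $\vv(G)d^{\vv(F)-1}$ appearing in the normalization of $t(F,G,d)$.

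First I would observe that for a connected graph $H$ on $k$ vertices one has the crude bound $\hom(H,G)\le \vv(G)d^{k-1}$ (choose the image of one vertex, then walk outward along a spanning tree, each step costing at most $d$), and hence also $\inj(H,G)\le\vv(G)d^{k-1}$. Now take any partition $\pi$ strictly coarser than $\hat 0$; then $F/\pi$ is connected (a quotient of a connected graph) and has $k=\vv(F/\pi)\le \vv(F)-1$ vertices. Applying the crude bound to each connected component — or just to $F/\pi$ itself, which is connected — gives $\inj(F/\pi,G)\le\vv(G)d^{k-1}\le\vv(G)d^{\vv(F)-2}$. Summing over the finitely many partitions $\pi$ (the number of them depends only on $F$), we get
\[
0\le \hom(F,G)-\inj(F,G)=\sum_{\pi\neq\hat 0}\inj(F/\pi,G)\le B_F\cdot \vv(G)d^{\vv(F)-2},
\]
where $B_F$ is the number of admissible partitions of $V(F)$. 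Dividing by $\vv(G)d^{\vv(F)-1}$ yields $0\le t(F,G,d)-t_{\inj}(F,G,d)\le B_F/d$, which is the claim. (Note both densities are normalized by the same $\vv(G)d^{\vv(F)-1}$; the $t_{\inj}$ denominator $\vv(G)d(d-1)^{\vv(F)-2}$ differs from this by a further factor $(1-1/d)^{\vv(F)-2}=1+O(1/d)$, and since $t_{\inj}\le 1$ this discrepancy is also absorbed into the $O(1/d)$ — so one should either prove the estimate with the common denominator first and then adjust, or carry the $(d-1)$'s throughout.)

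The only real subtlety is the bookkeeping around the two slightly different denominators: the cleanest route is to first prove $\hom(F,G)-\inj(F,G)=O(\vv(G)d^{\vv(F)-2})$ as above, conclude $t(F,G,d)-\inj(F,G)/(\vv(G)d^{\vv(F)-1})=O(1/d)$, and then separately note $\inj(F,G)/(\vv(G)d^{\vv(F)-1})-t_{\inj}(F,G,d)=t_{\inj}(F,G,d)\big((d-1)^{\vv(F)-2}/d^{\vv(F)-2}-1\big)=O(1/d)$ since $0\le t_{\inj}\le 1$ and $(1-1/d)^{\vv(F)-2}=1-O(1/d)$. I expect no genuine obstacle here — the argument is a routine inclusion–exclusion over vertex identifications combined with the elementary spanning-tree bound on homomorphism counts, and the factor of $d$ we save comes precisely from the drop in vertex count when any identification is made. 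Handling the case $d=1$ trivially (then all degrees are $0$, $F$ with an edge has $t=t_{\inj}=0$, and $F$ edgeless has $t=t_{\inj}=1$) disposes of the only degenerate situation.
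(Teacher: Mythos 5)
Your proposal is correct and follows essentially the same route as the paper: both use the identity $\hom(F,G)=\sum_{F'}\inj(F',G)$ over quotients of $F$, gain a factor of $d$ from the vertex-count drop in any proper quotient via the spanning-tree bound $\inj(F',G)\le\vv(G)d^{\vv(F')-1}$, and separately absorb the $(1-1/d)^{\vv(F)-2}$ discrepancy between the two normalizations using $t_{\inj}\le 1$. (The only quibble is your parenthetical that $d=1$ forces all degrees to be $0$ — it only forces them to be $\le 1$ — but that case is trivial anyway since both densities lie in $[0,1]$.)
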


\begin{proof} For $F=K_1$, both densities are 1 and the claim is trivial. For $\vv (F)\ge 2$, we have $$t(F,G,d)\ge  \frac{\inj(F,G)}{\vv (G)d^{\vv (F)-1}}=t_{\inj}(F,G,d)\left(1-\frac1d\right)^{\vv (F)-2},$$
whence
\[t_{\inj}(F,G,d)-t(F,G,d)\le t_{\inj}(F,G,d)\left(1-\left(1-\frac1d\right)^{\vv (F)-2}\right)\le\frac{\vv (F)-2}d.
\]
 On the other hand, we have the well-known formula \[\hom (F,G)=\sum _{F'}\inj (F',G)
 ,\] where $F'$ runs over the quotients of $F$.  Note that quotients of connected graphs are connected, and proper quotients have fewer vertices than the original graph. Thus, \[ t(F,G,d)= \sum_{F'} \frac{\inj(F',G)}{\vv (G)d^{\vv (F)-1}}\le\sum_{F'} \frac{t_{\inj}(F',G,d)}{d^{\vv (F)-\vv (F')}}=t_{\inj}(F,G,d)+O\left(\frac1d\right).\]
\end{proof}

\begin{Cor}\label{hominj} An admissible sequence $(G_n,d_n)$ with $d_n\to\infty$ is convergent precisely if the injective homomorphism density $t_{\inj} (F,G_n,d_n)$ converges for all connected graphs $F$. If this is the case, then \[\lim_{n\to\infty} t_{\inj} (F,G_n,d_n)=\lim_{n\to\infty}t(F,G_n,d_n)\] for any  connected $F$.
\end{Cor}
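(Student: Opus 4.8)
The plan is to derive this immediately from Proposition~\ref{homversusinj}. Fix a connected graph $F$ and let $C_F$ be the implied constant (depending only on $F$) in that proposition. Since $d_n\to\infty$, we have $d_n\ge 2$ for all but finitely many $n$, so $t_{\inj}(F,G_n,d_n)$ is well defined for such $n$; discarding the remaining finitely many indices affects neither the convergence nor the limit of either sequence. For the retained indices,
\[
\bigl|\,t(F,G_n,d_n)-t_{\inj}(F,G_n,d_n)\,\bigr|\le \frac{C_F}{d_n}\longrightarrow 0\qquad(n\to\infty).
\]
Hence the numerical sequence $\bigl(t(F,G_n,d_n)\bigr)_n$ converges if and only if $\bigl(t_{\inj}(F,G_n,d_n)\bigr)_n$ does, and when they converge the two limits agree.

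Applying this for every connected $F$ finishes the proof. By Definition~\ref{t}, the admissible sequence $(G_n,d_n)$ is convergent precisely when $t(F,G_n,d_n)$ converges for every connected $F$; by the previous paragraph this holds if and only if $t_{\inj}(F,G_n,d_n)$ converges for every connected $F$, which is the stated criterion. The equality $\lim_n t_{\inj}(F,G_n,d_n)=\lim_n t(F,G_n,d_n)$ for connected $F$ is exactly what the displayed estimate yields.

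I do not expect a genuine obstacle here: the statement is a formal consequence of Proposition~\ref{homversusinj}. The only point worth emphasizing is that the $O(1/d)$ bound there is uniform in $G$ for fixed $F$, which is what makes the passage to the limit legitimate once $d_n\to\infty$; correspondingly, the hypothesis $d_n\to\infty$ cannot be omitted if one wants the limits to coincide.
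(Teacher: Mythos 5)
Your argument is correct and is exactly the intended one: the corollary is an immediate consequence of Proposition~\ref{homversusinj}, since the uniform-in-$G$ bound $|t(F,G_n,d_n)-t_{\inj}(F,G_n,d_n)|\le C_F/d_n\to 0$ forces the two sequences to converge or diverge together with equal limits, and Definition~\ref{t} lets you test convergence on connected $F$ only. The paper treats this as immediate and gives no separate proof; your handling of the finitely many indices with $d_n=1$ is a harmless extra precaution.
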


This is well known in the dense case: homomorphism and injective homomorphism densities are almost the same.

It will be useful to also compare injective and componentwise injective homomorphisms.

\begin{Pro}\label{compinj} Let  $F$ have connected components $F_i$. Then  we have \begin{align*}0\le\left({\prod_it_{\inj}(F_i,G,d)}\right)-t_{\inj}(F,G,d)&\le\\ \le \frac1{\vv(G)}\sum_{F'}t_{\inj}(F',G,d)+
O\left(\frac1{\vv(G)^2}\right)&=O\left(\frac1{\vv(G)}\right)
,\end{align*} where $F'$ runs over the quotients of $F$ such that each $F_i$ maps injectively to $F'$ and $\cc(F')=\cc(F)-1$. The constant in the $O$ depends only on $F$.
\end{Pro}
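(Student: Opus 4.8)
The lower bound is immediate from the already-noted inequality $t_{\inj}(F,G,d)\le\prod_i t_{\inj}(F_i,G,d)$, so the content is the upper bound. The plan is to express the deficit $\prod_i\inj(F_i,G)-\inj(F,G)$ combinatorially. An ordered tuple of componentwise-injective homomorphisms $\phi_i\colon F_i\to G$ fails to assemble into a (globally) injective homomorphism $F\to G$ precisely when the images overlap, i.e.\ when the induced map $F=\bigsqcup_i F_i\to G$ factors through a proper quotient $F\twoheadrightarrow F'$ in which each $F_i$ still embeds. Grouping such tuples by the quotient $F'$ they determine gives
\[
\prod_i\inj(F_i,G)-\inj(F,G)=\sum_{F'}\inj(F',G),
\]
where $F'$ ranges over all quotients of $F$, distinct from $F$ itself, through which some componentwise-injective tuple factors; equivalently, each $F_i$ maps injectively to $F'$ and $\cc(F')<\cc(F)$. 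This is the exact analogue of the formula $\hom(F,G)=\sum_{F'}\inj(F',G)$ used in the proof of Proposition \ref{homversusinj}, and I would prove it the same way.

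Next I would estimate the right-hand side after dividing by the normalizing denominator $\vv(G)^{\cc(F)}d^{\cc_{\ge2}(F)}(d-1)^{(\vv-\cc-\cc_{\ge2})(F)}$. The dominant terms come from quotients $F'$ with $\cc(F')=\cc(F)-1$: these are obtained by identifying a single vertex of some $F_i$ with a single vertex of some $F_j$ ($i\ne j$), so $\vv(F')=\vv(F)-1$, while $\cc_{\ge2}$ can drop by at most... one needs to check the exponent bookkeeping, but the upshot is that each such term contributes $t_{\inj}(F',G,d)/\vv(G)$ up to the harmless discrepancy between $d$ and $d-1$ factors, which is itself $O(1/d)$ and hence absorbed. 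Quotients with $\cc(F')\le\cc(F)-2$ lose at least two powers of $\vv(G)$ relative to the normalization and contribute $O(1/\vv(G)^2)$. Since there are only finitely many quotients of $F$ (a number depending only on $F$) and every $t_{\inj}(F',G,d)\le 1$, summing gives the stated bound with an $O$-constant depending only on $F$.

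The main obstacle is the exponent bookkeeping in the normalizing denominators: when two components are glued at a vertex, the quantities $\vv$, $\cc$, and $\cc_{\ge2}$ all change, and one must verify that $\inj(F',G)$ divided by the denominator of $F$ equals $t_{\inj}(F',G,d)$ times $1/\vv(G)$ times a bounded factor that is $1+O(1/d)$ — in particular one must handle the possibility that gluing merges a two-vertex component with something, or that $d=d-1$ distinctions accumulate. A clean way to sidestep most of this is to first replace all $d$'s by $d-1$'s at the cost of an overall $1+O(1/d)$ factor (legitimate since $F$ is fixed), so that the denominator becomes simply $\vv(G)^{\cc(F)}(d-1)^{(\vv-\cc)(F)}$, making the comparison of denominators for $F$ and $F'$ transparent: passing to a quotient with $\cc'=\cc-1$, $\vv'=\vv-1$ changes $(\vv-\cc)$ by at most a controlled amount, and the leftover is exactly one factor of $\vv(G)$ in the numerator's favor. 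Then one separates the $\cc(F')=\cc(F)-1$ quotients (the displayed sum $\frac1{\vv(G)}\sum_{F'}t_{\inj}(F',G,d)$) from the lower ones (the $O(1/\vv(G)^2)$), and since $\vv(G)\ge d$ for any admissible pair, the final collapse to $O(1/\vv(G))$ is immediate.
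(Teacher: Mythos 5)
Your outline is the paper's own: the identity $\prod_i\inj(F_i,G)=\sum_{F'}\inj(F',G)$ over quotients of $F$ obtained by identifying only vertices from distinct components (equivalently, quotients into which each $F_i$ maps injectively), then dividing by the normalizing denominator of $F$ and sorting the quotients by $\cc(F')$. The lower bound and the final collapse to $O(1/\vv(G))$ are fine.

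The gap is in your proposed fix for the exponent bookkeeping, and it is not cosmetic. Replacing the denominator $\vv(G)^{\cc(F)}d^{\cc_{\ge 2}(F)}(d-1)^{(\vv-\cc-\cc_{\ge 2})(F)}$ by $\vv(G)^{\cc(F)}(d-1)^{(\vv-\cc)(F)}$ and then converting each summand back to $t_{\inj}(F',G,d)$ costs a factor $\left(d/(d-1)\right)^{\cc_{\ge 2}(F')}=1+O(1/d)$ per term. On the quotients with $\cc(F')=\cc(F)-1$ this leaves an additive excess of order $\frac{1}{d\,\vv(G)}$ over the displayed main term, and $\frac{1}{d\,\vv(G)}=O\left(\frac{1}{\vv(G)^2}\right)$ only when $\vv(G)=O(d)$. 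Your justification ``$\vv(G)\ge d$ for any admissible pair'' is false (Definition~\ref{t} imposes no such bound), and in any case points the wrong way: the problematic regime is $d\ll\vv(G)$, e.g.\ bounded degree, where $1/(d\,\vv(G))\gg 1/\vv(G)^2$. So your route only yields $\left(1+O(1/d)\right)\frac{1}{\vv(G)}\sum_{F'}t_{\inj}(F',G,d)+O\left(1/\vv(G)^2\right)$, which is weaker than the stated two-term bound --- and the precise form matters, since the clean $O(1/\vv(G)^2)$ error is what the moment/Borel--Cantelli argument in Proposition~\ref{random} relies on. The repair is to skip the $d\mapsto d-1$ replacement: because each $F_i$ embeds into $F'$, one has $\cc_{\ge 2}(F')\le\cc_{\ge 2}(F)$ and $(\vv-\cc)(F')\le(\vv-\cc)(F)$, hence $d^{\cc_{\ge 2}(F)}(d-1)^{(\vv-\cc-\cc_{\ge 2})(F)}\ge d^{\cc_{\ge 2}(F')}(d-1)^{(\vv-\cc-\cc_{\ge 2})(F')}$ exactly (for $d\ge 2$), so each summand is at most $t_{\inj}(F',G,d)/\vv(G)^{\cc(F)-\cc(F')}$ with no $(1+O(1/d))$ loss. (A minor further point: quotients with $\cc(F')=\cc(F)-1$ need not come from a single vertex identification --- two components may be glued at several vertex pairs --- but the comparison above covers all of them, so nothing breaks.)
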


\begin{proof} We have  \[\prod_i\inj (F_i,G)=\sum _{F'}\inj (F',G)
 ,\] where $F'$ runs over the quotients of $F$ that arise by only identifying nodes from distinct components.  We always have $(\vv-\cc)(F')\le (\vv-\cc)(F)$ and $\cc_{\ge 2}(F')\le \cc_{\ge 2}(F)$, hence  \[\prod_i t_{\inj}(F_i,G,d)= \sum_{F'} \frac{\inj(F',G)}{\vv (G)^{\cc(F)}d^{\cc_{\ge 2} (F)}(d-1)^{(\vv-\cc-c_{\ge 2})(F)}}\le\sum_{F'} \frac{t_{\inj}(F',G,d)}{\vv(G)^{\cc (F)-\cc (F')}}
 \] and the claim follows.
\end{proof}

\subsection{Rooted homomorphism densities}
\begin{Def} Let $(F,o)$ and $(G,p)$ be rooted graphs, where $F$ is connected. Let $\hom((F,o), (G,p))$ be the number of homomorphisms of $F$ into $G$ that map $o$ to $p$. 
If $(G, d)$ is admissible, we define the \it rooted homomorphism density \rm \[t((F,o), (G,p),d)=\frac{\hom((F,o), (G,p))}{d^{\vv (F)-1}}\in [0,1].\]
\end{Def}

\begin{Rem}\label{rootedunrooted} For any connected rooted graph $(F,o)$ and any admissible pair $(G, d)$, we have \[t(F,G,d)=\mathbb E
t((F,o), (G,p),d),\] where $p$ is a uniform random node of $G$.
\end{Rem}

\subsection{Regular  sequences}\label{REGULAR}
\begin{Def} Let $0\le\alpha\le 1$. The admissible sequence $(G_n,d_n)$ is \it $\alpha$-regular \rm if the degree of a uniform random vertex of $G_n$, divided by $d_n$, tends stochastically to $\alpha$. 
\end{Def}

If the graph $G_n$ is $\alpha d_n$-regular for every $n$, then of course the sequence $(G_n,d_n)$ is $\alpha$-regular. Let us look at less trivial examples.

\begin{Ex}
Let $G_n$ be the $d_n$-dimensional grid graph with $n\times\cdots\times n$ points.
Then $\vv(G_n)=n^{d_n}$ and $$\e(G_n)=d_nn^{d_n-1}(n-1),$$
so $$t(K_2,G_n,2d_n)=\e(G_n)/(\vv(G_n)d_n)=(n-1)/n\to 1,$$ i.e., the sequence $(G_n,2d_n)$ is $1$-regular, cf.\ Proposition~\ref{reg} below. If the sequence $d_n$ either stabilizes to some $d$ or tends to $\infty$, then $(G_n,2d_n)$ is convergent, cf.\ Subsection~\ref{GIRTH}.
\end{Ex}

The case when $d_n\to\infty$ can be  generalized as follows.

\begin{Ex}\label{generalCartesian} Consider a triangular array  $(\Gamma_{ni},\delta_{ni})$ $(1\le i\le n)$ of admissible pairs with normalized  average degree $\alpha_{ni} =t(K_2,\Gamma_{ni}, \delta_{ni})$. Set $G_n=\Gamma_{n1}\square\cdots\square\Gamma_{nn}$ and $d_n=\delta_{n1}+\dots+\delta_{nn}$. Assume that \begin{equation}\label{kicsiny} \max_{1\le i\le n}\delta_{ni}/d_n\to 0\end{equation} and the weighted average $$\frac1{d_n}\sum_{i=1}^n\delta_{ni}\alpha_{ni}\to\alpha.$$ Then the sequence $(G_{ n}, d_n)$ is $\alpha$-regular.\end{Ex}

\begin{proof} Let $X_{ni}$ be the degree of a uniform random node in $\Gamma_{ni}$, divided by $\delta_{ni}$. Then $X_{ni}$ is a random variable with range in $[0,1]$, and $\mathbb EX_{ni}=\alpha_{ni}$. The degree of a  uniform random node in $G_{ n}$, divided by $d_n$, is $$X_n=(\delta_{n1}X_{n1}+\dots+\delta_{nn}X_{nn})/d_n,$$ where the $X_{ni}$ are independent. We have $\mathbb EX_n\to\alpha$ and $$\mathbb D^2X_n=(\delta_{n1}^2\mathbb D^2X_{n1}+\dots+\delta_{nn}^2\mathbb D^2X_{nn})/d_n^2\le \frac1{d_n^2}\sum_{i=1}^n\delta_{ni}^2\to 0.$$ Thus $X_n\to\alpha$ stochastically as claimed,  by Chebyshev's inequality.
\end{proof}

Again we refer to Subsection~\ref{GIRTH} where it will be proved that such a sequence $(G_n, d_n)$ of Cartesian sums is always convergent.

\begin{Cor}\label{Cartesian} Let $(\Gamma, \delta)$ be admissible with normalized  average degree $t(K_2,\Gamma,\delta)=\alpha$. Then the sequence $\left(\Gamma^{\square n}, n\delta\right)$ is $\alpha$-regular.\end{Cor}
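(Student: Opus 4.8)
The plan is to derive Corollary~\ref{Cartesian} from Example~\ref{generalCartesian} by exhibiting the constant sequence as a special case of the triangular array considered there. So I would set, for each $n$ and each $1\le i\le n$, the pair $(\Gamma_{ni},\delta_{ni})=(\Gamma,\delta)$, and observe that then $G_n=\Gamma_{n1}\square\cdots\square\Gamma_{nn}=\Gamma^{\square n}$ and $d_n=\delta_{n1}+\dots+\delta_{nn}=n\delta$, exactly the objects in the statement.

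Next I would check the two hypotheses of Example~\ref{generalCartesian}. The normalized average degrees are all equal, $\alpha_{ni}=t(K_2,\Gamma,\delta)=\alpha$. Condition~\eqref{kicsiny} reads $\max_i\delta_{ni}/d_n=\delta/(n\delta)=1/n\to 0$, which holds. The weighted average is $\frac1{d_n}\sum_{i=1}^n\delta_{ni}\alpha_{ni}=\frac1{n\delta}\cdot n\delta\alpha=\alpha$, which trivially converges to $\alpha$. Hence Example~\ref{generalCartesian} applies and yields that $(G_n,d_n)=\left(\Gamma^{\square n},n\delta\right)$ is $\alpha$-regular, as claimed.

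There is no real obstacle here: the statement is a direct specialization, and the only thing to be careful about is that a constant (non-triangular) sequence does legitimately fit the triangular-array framework, which it does once one notes that the entries in row $n$ are simply allowed to repeat. One could alternatively give a self-contained one-line argument: the degree of a uniform random node of $\Gamma^{\square n}$ divided by $n\delta$ is the average $\frac1n(X_1+\dots+X_n)$ of $n$ i.i.d.\ copies of the random variable $X=\deg/\delta$ on $\Gamma$, which has mean $\alpha$ and bounded variance, so by the weak law of large numbers (Chebyshev) it tends to $\alpha$ stochastically. But invoking Example~\ref{generalCartesian} is the cleanest route and makes the corollary label accurate.
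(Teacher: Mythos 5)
Your proof is correct and is exactly the route the paper intends: Corollary~\ref{Cartesian} is stated as a corollary of Example~\ref{generalCartesian} with no separate proof, and the intended argument is precisely the specialization $(\Gamma_{ni},\delta_{ni})=(\Gamma,\delta)$ with the routine verification of \eqref{kicsiny} and the weighted-average condition that you carry out.
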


Regular sequences can be characterized in terms of homomorphism densities.

\begin{Pro}\label{alphareg}
For an admissible sequence $(G_n,d_n)$, the following are equivalent.
\begin{itemize}
\item[(a)] The sequence  $(G_n,d_n)$ is $\alpha$-regular.


\item[(b)] We have  $t(K_2, G_n,d_n)\to \alpha$ and  $t(P_3, G_n,d_n)\to \alpha^{2}$ as $n\to\infty$.

\item[(c)] For all  forests $F$, we have  $t(F, G_n,d_n)\to \alpha^{\e(F)}$ as $n\to\infty$.

\item[(d)] For all rooted trees $(F, o)$, we have  $t((F,o), (G_n, p_n),d_n)\to \alpha^{\e(F)}$ stochastically, as $n\to\infty$. Here $p_n$ is a  uniform random node of $G_n$.
\end{itemize}
\end{Pro}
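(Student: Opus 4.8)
The plan is to establish the equivalences (a)$\Leftrightarrow$(b), (b)$\Leftrightarrow$(c) and (c)$\Leftrightarrow$(d); of these, everything except (b)$\Rightarrow$(c) is short. Throughout, write $Y_n=\deg_{G_n}(p_n)/d_n\in[0,1]$ for a uniform random node $p_n$ of $G_n$. Since $\hom(K_2,G)=2\e(G)$ and $\hom(P_3,G)=\sum_v\deg_G(v)^2$, we have $\mathbb E Y_n=t(K_2,G_n,d_n)$ and $\mathbb E Y_n^2=t(P_3,G_n,d_n)$. For (a)$\Rightarrow$(b): if $Y_n\to\alpha$ stochastically then, $Y_n$ being bounded, $\mathbb E Y_n\to\alpha$ and $\mathbb E Y_n^2\to\alpha^2$ by bounded convergence. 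For (b)$\Rightarrow$(a): condition (b) gives $\mathbb D^2Y_n=\mathbb E Y_n^2-(\mathbb E Y_n)^2\to\alpha^2-\alpha^2=0$, so $Y_n-\mathbb E Y_n\to0$ in probability by Chebyshev's inequality, and $\mathbb E Y_n\to\alpha$, whence $Y_n\to\alpha$ stochastically.

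The implication (c)$\Rightarrow$(b) is immediate, since $K_2$ and $P_3$ are forests. For (d)$\Rightarrow$(c) I would use $t(F,G_n,d_n)=\mathbb E\,t((F,o),(G_n,p_n),d_n)$ from Remark~\ref{rootedunrooted}: for a tree $F$ the integrand lies in $[0,1]$ and converges stochastically to $\alpha^{\e(F)}$ by (d), so bounded convergence gives $t(F,G_n,d_n)\to\alpha^{\e(F)}$, and multiplicativity of $t$ over connected components extends this to all forests. For (c)$\Rightarrow$(d) I would run a second moment argument: with $Z_n=t((F,o),(G_n,p_n),d_n)$ we have $\mathbb E Z_n=t(F,G_n,d_n)\to\alpha^{\e(F)}$ by (c); moreover $\hom((F,o),(G,p))^2=\hom((\widehat F,\widehat o),(G,p))$, where $\widehat F$ is the tree obtained by gluing two copies of $F$ along the root and $\e(\widehat F)=2\e(F)$, so summing over $p$ yields $\mathbb E Z_n^2=t(\widehat F,G_n,d_n)\to\alpha^{2\e(F)}$ by (c). Hence $\mathbb D^2 Z_n\to0$, so $Z_n\to\alpha^{\e(F)}$ in $L^2$ and therefore stochastically.

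It remains to prove (b)$\Rightarrow$(c), the core of the argument. By multiplicativity it suffices to treat a tree $F$, and I would induct on $\e(F)$; the cases $\e(F)=0$ (where $t\equiv1=\alpha^0$) and $\e(F)=1$ (the first half of (b)) are clear. For $\e(F)\ge2$, pick a leaf $v$ with neighbor $u$ and set $F'=F-v$, a tree with $\e(F')=\e(F)-1$. The identity $\hom(F,G)=\sum_{\psi\colon F'\to G}\deg_G(\psi(u))$ rewrites as
\[t(F,G,d)=\sum_{w\in V(G)}\frac{\deg_G(w)}{d}\,\mu_w,\qquad \mu_w:=\frac{\hom((F',u),(G,w))}{\vv(G)\,d^{\vv(F')-1}},\]
where $0\le\mu_w\le1/\vv(G)$ and $\sum_w\mu_w=t(F',G,d)$. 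Comparing with $\alpha\,t(F',G,d)=\sum_w\alpha\mu_w$ and splitting the vertices $w$ according to whether $|\deg_G(w)/d-\alpha|\le\epsilon$, the ``good'' vertices contribute at most $\epsilon\sum_w\mu_w\le\epsilon$ to $|t(F,G,d)-\alpha\,t(F',G,d)|$, while the ``bad'' vertices contribute at most their number divided by $\vv(G_n)$, namely $\mathbb P(|Y_n-\alpha|>\epsilon)$, which tends to $0$ by Chebyshev and (b). Letting $\epsilon\to0$ gives $t(F,G_n,d_n)-\alpha\,t(F',G_n,d_n)\to0$, and $t(F',G_n,d_n)\to\alpha^{\e(F)-1}$ by the inductive hypothesis completes the step. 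The genuinely delicate point is exactly this last estimate: the probability-like measure $(\mu_w)_w$ is in general far from uniform, and it can be controlled only because every individual weight is $O(1/\vv(G))$ while $\deg_G(w)/d\le1$, so that the few vertices of atypical degree cannot move the average — this is the one place where the variance hypothesis $t(P_3,G_n,d_n)\to\alpha^2$ is really used.
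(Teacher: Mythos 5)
Your proof is correct, but it routes the hard implication differently from the paper. The paper closes the cycle as (b)$\Rightarrow$(a)$\Rightarrow$(d)$\Rightarrow$(c)$\Rightarrow$(b), with all the work in a direct proof of (a)$\Rightarrow$(d): an induction on $\vv(F)$ that tracks three exceptional vertex sets (nodes where the sub-branch densities are off, nodes with too many neighbors in that bad set, and nodes of atypical degree) and sandwiches the rooted count between $((\alpha-2\epsilon)d_n)^{\e(F)}$ and $((\alpha+\epsilon)d_n)^{\e(F)}+k\epsilon d_n^{\e(F)}$. You instead prove (b)$\Rightarrow$(c) by a leaf-peeling induction on $\e(F)$ — your identity $t(F,G,d)=\sum_w(\deg_G(w)/d)\mu_w$ with $\mu_w\le 1/\vv(G)$ and $\sum_w\mu_w=t(F',G,d)$ is exactly right, and the good/bad split is sound since each bad vertex contributes at most $1/\vv(G)$ — and then upgrade (c) to (d) by the second-moment observation $\mathbb E Z_n^2=t(\widehat F,G_n,d_n)\to\alpha^{2\e(F)}$ for the tree $\widehat F$ obtained by doubling $F$ at the root (note $\e(\widehat F)=2\e(F)$ and $\vv(\widehat F)-1=2(\vv(F)-1)$, so the normalizations match). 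This tree-doubling variance trick is a genuinely cleaner way to pass from the averaged statement to the stochastic one, and it sidesteps the paper's exceptional-set bookkeeping entirely; what the paper's direct argument buys in exchange is explicit quantitative control (sets of measure $O(\epsilon)$ outside of which the rooted density is pinned down), which your $L^2$ argument does not exhibit. Your (a)$\Leftrightarrow$(b) is identical to the paper's, and the reductions from forests to trees via multiplicativity and from (d) to (c) via bounded convergence are both fine.
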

Note that the statement (d) for $F=K_2$ is exactly the same as (a).
\begin{proof}
Let $X_n$ be the degree of a uniform random node of $G_n$, divided by $d_n$. Then $X_n$ is a random variable with values in $[0,1]$.
The equivalence of (a) and (b) is clear since 
we have  $t(K_2,G_n,d_n)=\mathbb EX_n$  and  $t(P_3,G_n,d_n)=\mathbb EX_n^{2}$.


As (d) $\Rightarrow$ (c)  $\Rightarrow$  (b) is trivial, it suffices  to show that (a)  implies (d). We use induction on $\vv(F)$. The case $\vv(F)=1$ is trivial.
 Let $\vv(F)\ge 2$. Let $o_i$ $(i=1,\dots ,k)$ be the neighbors of $o$ in $F$, i.e., $N(o)=\{o_1,\dots, o_k\}$.
Let $F_i$ be the connected component of $F-o$ containing $o_i$.

Let $\epsilon>0$. By the induction hypothesis, for $n\ge n_0(\epsilon)$ there exists
an $S_n\subset V(G_n)$, with $|S_n|<\epsilon^2 \vv(G_n)$, such that for all $q\in V(G_n)-S_n$ and all $i$, we have  $$\left|\sqrt[\e(F_i)]{t((F_i,o_i), (G_n, q),d_n)}- \alpha
\right|<\epsilon.$$ Let $T_n$ be the set of nodes in $G_n$ that have at least $\epsilon d_n$ neighbors in $S_n$. Since all nodes in $S_n$ have at most $d_n$ neighbors, we have $|T_n|\le |S_n|/\epsilon<\epsilon \vv(G_n).$ For all $p\in V(G_n)-T_n$, we have
$$0\le \hom((F,o),(G_n,p))-\hom((F,o,N(o)), (G_n, p, V(G_n)-S_n))\le k\epsilon d_n^{\vv(F)-1}.$$ Let $U_n$ be the set of nodes in $G_n$ whose degree divided by $d_n$ is not in $(\alpha-\epsilon, \alpha+\epsilon)$. For $n\ge n_0(\epsilon)$, we have $|U_n|<\epsilon \vv(G_n)$ by (a). For all $p\in V(G_n)-T_n-U_n$, we have $$
((\alpha-2\epsilon)d_n)^{\e(F)}\le \hom((F,o,N(o)), (G_n, p, V(G_n)-S_n))\le ((\alpha+\epsilon)d_n)^{\e(F)}$$
and therefore
$$(\alpha-2\epsilon)^{\e(F)}\le t((F,o),(G_n,p), d_n)\le (\alpha+\epsilon)^{\e(F)}+k\epsilon.$$
This is true for all $\epsilon>0$, $n\ge n_0(\epsilon)$, and $p\in V(G_n)-T_n-U_n$, where $|T_n|+|U_n|<2\epsilon \vv(G_n)$. Statement (d) follows.
\end{proof}

\begin{Pro}\label{reg}
For an admissible sequence $(G_n,d_n)$, the following are equivalent.
\begin{itemize}
\item[(a)] The sequence  $(G_n,d_n)$ is $1$-regular.

\item[(b)] The average  degree in $G_n$ is asymptotically $d_n$.

\item[(c)] We have  $t(K_2, G_n,d_n)\to 1$ as $n\to\infty$.

\item[(d)] For all forests $F$, we have  $t(F, G_n,d_n)\to 1$ as $n\to\infty$.
\end{itemize}
\end{Pro}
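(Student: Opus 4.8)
The plan is to prove the equivalences in the order $(b)\Leftrightarrow(c)$, then $(a)\Leftrightarrow(c)$, then $(c)\Leftrightarrow(d)$. All but one step is routine; indeed this proposition is essentially the case $\alpha=1$ of Proposition~\ref{alphareg}, sharpened by the observation that for $\alpha=1$ a first-moment estimate already suffices.

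First, $(b)\Leftrightarrow(c)$ is immediate from the definition: $t(K_2,G_n,d_n)=2\e(G_n)/(\vv(G_n)d_n)$ is exactly the average degree of $G_n$ divided by $d_n$, so $(c)$ is a verbatim restatement of $(b)$. Similarly $(d)\Rightarrow(c)$ is trivial, $K_2$ being a forest.

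The heart of the matter is $(a)\Leftrightarrow(c)$. Let $X_n$ be the degree of a uniform random vertex of $G_n$ divided by $d_n$; it takes values in $[0,1]$ and $\mathbb E X_n=t(K_2,G_n,d_n)$. If $(a)$ holds, i.e.\ $X_n\to 1$ stochastically, then for every $\epsilon>0$ we have $\mathbb E(1-X_n)\le\epsilon+\mathbb P(1-X_n>\epsilon)$, whence $\limsup_n\mathbb E(1-X_n)\le\epsilon$; letting $\epsilon\to 0$ gives $\mathbb E X_n\to 1$, i.e.\ $(c)$. Conversely, assume $(c)$. Then $1-X_n\ge 0$ and $\mathbb E(1-X_n)=1-t(K_2,G_n,d_n)\to 0$, so Markov's inequality gives $\mathbb P(X_n<1-\epsilon)=\mathbb P(1-X_n>\epsilon)\le\mathbb E(1-X_n)/\epsilon\to 0$ for every $\epsilon>0$; together with $\mathbb P(X_n>1)=0$ this is precisely $X_n\to 1$ stochastically, i.e.\ $(a)$. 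This is the only point where $\alpha=1$ matters: unlike in Proposition~\ref{alphareg} and Example~\ref{generalCartesian}, where Chebyshev's inequality (hence a second moment) was used, convergence of the mean of a $[0,1]$-valued variable to the endpoint $1$ already forces concentration there.

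Finally, $(c)\Rightarrow(d)$: by the previous step $(c)$ yields $(a)$, so $(G_n,d_n)$ is $1$-regular and Proposition~\ref{alphareg} applies with $\alpha=1$; its equivalent condition ``$t(F,G_n,d_n)\to\alpha^{\e(F)}$ for all forests $F$'' reads, for $\alpha=1$, exactly as $(d)$. (Alternatively, one argues directly: writing a forest as the disjoint union of its component trees and using multiplicativity of $t$, it suffices to see $t(T,G_n,d_n)=\mathbb E\,t((T,o),(G_n,p),d_n)\to 1$ for a tree $T$, which is part (d) of Proposition~\ref{alphareg}.) There is thus no real obstacle; the only content is the elementary concentration remark above.
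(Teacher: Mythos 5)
Your proof is correct and follows essentially the same route as the paper: the equivalence of (b) and (c) is definitional, (d)$\Rightarrow$(c) is trivial, and the remaining implication to (d) is delegated to Proposition~\ref{alphareg} with $\alpha=1$. The only difference is that you spell out, via Markov's inequality, the endpoint-concentration step that the paper dismisses as ``clear since $(G_n,d_n)$ is admissible''; this is a welcome clarification but not a different argument.
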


\begin{proof}
The equivalence of (a) and (b) is clear since $(G_n,d_n)$ is admissible.

Observe that $t(K_2,G,d)$ is the average degree in $G$, divided by $d$. This shows the equivalence of (b) and (c).

Since trivially (d) $\Rightarrow$ (c), it suffices to prove  (a) $\Rightarrow$ (d). This follows from Proposition~\ref{alphareg} and Remark~\ref{rootedunrooted}.
\end{proof}

\subsection{Sequences with large essential girth} \label{GIRTH}
\begin{Def} The graph sequence $(G_n)$ \it has large girth \rm if, for any $k\ge 3$, we have $\inj(C_k,G_n)=0$ for $n\ge n_0(k)$.
The admissible sequence $(G_n, d_n)$  \it has  large essential girth \rm if, for all $k\ge 3$, we have $t_{\inj}(C_k, G_n,d_n)\to 0$ as $n\to\infty$.
\end{Def}

\begin{Rem}
If $(G_n,d_n)$ has large essential girth and $G_n'$ is a spanning subgraph of $G_n$, then
$(G'_n,d_n)$ has large essential girth.
\end{Rem}

\begin{Rem}\label{walk}
The injective homomorphism density of  a cycle in a  graph $G$ satisfies
\begin{equation}\label{NBW}t_{\inj}(C_k, G,d)\le\frac {N(G)}{\vv(G)d(d-1)^{k-2}}, \end{equation} where $N(G)$ is the number of non-backtracking walks of  length $k-1$ in $G$ whose starting point and endpoint are adjacent. The fraction on the right hand side  has the following interpretation in terms of random walks. Choose $v_0\in V(G)$ uniformly at random. With probability $\deg (v_0)/d$, let $v_1$ be a  neighbor of $v_0$ chosen uniformly at random. With probability  $1-\deg (v_0)/d$, do not define  $v_1$. If $i\ge 2$ and $v_{i-1}$ is defined, then with probability
 $(\deg (v_{i-1})-1)/(d-1)$, let $v_i$ be a  neighbor of $v_{i-1}$, distinct from $v_{i-2}$,  chosen uniformly at random. With probability  $1-(\deg (v_{i-1})-1)/(d-1)$, do not define  $v_i$.  Then
the right hand side of \eqref{NBW} is the probability that $v_{k-1}$ is defined and adjacent to $v_0$.

When $G$ is $d$-regular, $v_i$ is almost surely defined
for every $i$.
\end{Rem}

For example, we look at two classical examples of regular  graphs with intermediate density: hypercubes and projective planes (and their generalizations below).  Let $Q_n=\{0,1\}^n$ be the hypercube graph.

\begin{Pro}\label{cubeproj}
\begin{itemize}
\item[(a)]  Let $q_n\to\infty$ and let $G_n$ be the (bipartite) incidence graph of points and hyperplanes in a projective space of order $q_n$ and dimension $r_n$. Let $$d_n=\left(q_n^{r_n}-1\right)/(q_n-1).$$  Then $G_n$ is $d_n$-regular and $(G_n, d_n)$ has large essential girth.

\item[(b)]    Consider a triangular array  $(\Gamma_{ni},\delta_{ni})$ $(1\le i\le n)$ of admissible pairs. Let $G_n$ be a subgraph of $\Gamma_{n1}\square\cdots\square\Gamma_{nn}$, and let $d_n=\delta_{n1}+\dots+\delta_{nn}$. As in Example~\ref{generalCartesian}, assume that \eqref{kicsiny} holds.
Then the  sequence $(G_n, d_n)$ 
has large essential girth.

\end{itemize}
\end{Pro}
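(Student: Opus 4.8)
The plan is to bound $t_{\inj}(C_k,G_n,d_n)$ for each fixed $k\ge 3$ using the non-backtracking walk interpretation of Remark~\ref{walk}, reducing in both cases to showing that a suitable random walk of length $k-1$ almost surely fails to close up. Note first that by Remark~\ref{subgraphinj} it suffices to treat the ``largest'' ambient graph: in (a) we already work with the $d_n$-regular incidence graph itself, and in (b) we may replace $G_n$ by the full Cartesian sum $\Gamma_{n1}\square\cdots\square\Gamma_{nn}$, since passing to a spanning subgraph only decreases $t_{\inj}(C_k,\cdot,d_n)$.

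\medskip

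\textbf{Part (a).} Here $G_n$ is bipartite and $d_n$-regular, so by the last sentence of Remark~\ref{walk} the random non-backtracking walk $v_0,v_1,\dots,v_{k-1}$ is a.s.\ fully defined, and $t_{\inj}(C_k,G_n,d_n)\le \mathbb P(v_{k-1}\sim v_0)$. Since the graph is bipartite, $\mathbb P(v_{k-1}\sim v_0)=0$ for even $k$, so we may assume $k$ odd. For odd $k$, $v_{k-1}$ lies in the same part as $v_0$, so adjacency is impossible unless the walk actually revisits a vertex; more precisely, a closed non-backtracking walk of odd length $k$ in a bipartite graph cannot be a cycle, so if $v_{k-1}\sim v_0$ the closed walk $v_0v_1\cdots v_{k-1}v_0$ contains a shorter cycle, hence (since the incidence graph of a projective space has girth $6$) we need $k\ge 6$, and in fact one must analyze which short configurations of points and hyperplanes can appear. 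The cleanest route is a direct count: the number of non-backtracking walks of length $k-1$ from a fixed $v_0$ ending at a neighbor of $v_0$ is $O(q_n^{\,k-2})$ (a non-backtracking walk that returns near its start in bounded length is forced through a bounded-complexity incidence configuration, and each such configuration has at most $O(q_n^{\,k-2})$ realizations because two points determine a line and two hyperplanes meet in a codimension-$2$ flat), whereas $d_n(d_n-1)^{k-2}$ grows like $q_n^{(r_n-1)(k-1)}$ — and since $r_n\ge 2$ and $k\ge 3$ we have $(r_n-1)(k-1)\ge k-1>k-2$, so the ratio tends to $0$. I would phrase this counting lemma carefully, as it is the technical heart of (a).

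\medskip

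\textbf{Part (b).} Now $G=\Gamma_{n1}\square\cdots\square\Gamma_{nn}$; a vertex is an $n$-tuple, and each edge changes exactly one coordinate, the $i$-th say, along an edge of $\Gamma_{ni}$. Track the multiset of ``coordinates used'' along the random non-backtracking walk $v_0,\dots,v_{k-1}$ of Remark~\ref{walk}. For $v_{k-1}$ to be adjacent to $v_0$, the walk $v_0\cdots v_{k-1}v_0$ is a closed walk of length $k$ in $G$, so the multiset of coordinates used has the property that, coordinate by coordinate, the induced closed walk in each $\Gamma_{ni}$ has even... more usefully: a closed walk in a Cartesian sum projects to a closed walk in each factor, so every coordinate that is used at all is used at least twice. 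Hence at most $\lfloor k/2\rfloor$ distinct coordinates occur. The key point is non-backtracking: two \emph{consecutive} steps that use the same coordinate $i$ cannot immediately undo each other, but over the whole bounded-length walk, the event that some coordinate is re-used is small. Concretely, at step $j$ the walk picks a neighbor in a product graph; conditioned on the current vertex, the probability that step $j$ uses a coordinate that was already used at some earlier step is at most (number of earlier steps)$\,\cdot\,\max_i \delta_{ni}/( $ current available degree$)$, and because of the normalization \eqref{kicsiny}, $\max_i\delta_{ni}/d_n\to 0$; summing the $O(k^2)$ such events shows $\mathbb P(\text{some coordinate re-used})\to 0$. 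But adjacency of $v_{k-1}$ and $v_0$ forces a coordinate re-use. Therefore $t_{\inj}(C_k,G_n,d_n)\le\mathbb P(v_{k-1}\text{ defined and }\sim v_0)\to 0$.

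\medskip

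The main obstacle in (b) is making the ``probability of reusing a coordinate'' estimate rigorous inside the slightly awkward sub-Markov walk of Remark~\ref{walk}, where steps may fail to be defined and the transition probabilities depend on local degrees; one has to check that discarding undefined-walk outcomes only helps, and that at each defined step the chance of landing in an already-used factor is genuinely $O(\max_i\delta_{ni}/d_n)$ uniformly — this uses that, from any vertex, the number of edges in already-used coordinates is at most $(\text{steps so far})\cdot\max_i\delta_{ni}$ while the walk's step is spread over $\asymp d_n$ or $\asymp d_n-1$ choices with the prescribed probabilities. In (a) the main obstacle is instead the incidence-geometry counting lemma bounding non-backtracking near-returns of bounded length; I would isolate it as a separate claim and prove it by induction on $k$, using that in a projective space two distinct points lie on a unique line and dually.
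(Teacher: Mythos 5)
Part (b) of your proposal is essentially the paper's own argument: run the sub-Markov non-backtracking walk of Remark~\ref{walk}, observe that each step changes one coordinate, and use \eqref{kicsiny} to show that the probability of ever re-using a coordinate in $k-1$ steps is $O\bigl(k^2\max_i\delta_{ni}/(d_n-1)\bigr)\to 0$, so that $v_0$ and $v_{k-1}$ end up at Hamming distance $k-1\ge 2$ and cannot be adjacent. One small caveat: the proposition allows $G_n$ to be an arbitrary (not necessarily spanning) subgraph of the Cartesian sum, and for a non-spanning subgraph the inequality $t_{\inj}(C_k,G_n,d_n)\le t_{\inj}(C_k,\Gamma_{n1}\square\cdots\square\Gamma_{nn},d_n)$ you invoke can fail, since the normalizing factor $\vv(\cdot)$ shrinks along with the cycle count. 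The fix is to skip the reduction and run the walk in $G_n$ itself, as the paper does; your coordinate-tracking estimate is uniform in the starting vertex and goes through verbatim.

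Part (a) has two genuine problems. First, the parity analysis is backwards: after $k-1$ steps in a bipartite graph, $v_{k-1}$ lies in the part opposite to $v_0$ exactly when $k$ is even, so it is the \emph{even} $k$ that require work (odd cycles simply do not embed in a bipartite graph), whereas you dispose of even $k$ in one line and then spend the argument on odd $k$, where adjacency is impossible for the trivial reason that there are no edges inside a part. Second, and more seriously, the counting claim at the heart of your argument --- that the number of length-$(k-1)$ non-backtracking walks from a fixed $v_0$ ending at a neighbour of $v_0$ is $O(q_n^{k-2})$ --- is false as soon as $r_n\ge 3$. For example, for $k=6$ and dimension $r$ the number of $6$-cycles through a fixed vertex is of order $q^{5r-6}$ (three points in general position, each of the three hyperplanes chosen among the roughly $q^{r-2}$ hyperplanes through a line), which is $q^4$ only when $r=2$. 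The correct comparison is $q^{(r-1)(k-1)-1}$ against the normalization $d(d-1)^{k-2}\asymp q^{(r-1)(k-1)}$: the margin is a single factor of $q$, not the comfortable gap your exponents suggest, so the heuristic "bounded-complexity configuration" count cannot be waved through. The paper's proof avoids all of this by bounding $\inj(C_k,G)$ directly: in a $k$-cycle (with $k$ even) each hyperplane contains two distinct points of the cycle, hence the line they span, and there are only $(q^{r-1}-1)/(q-1)$ hyperplanes through a line; choosing the $k/2$ points freely and each hyperplane among these gives
\[
t_{\inj}(C_k,G,d)\le\frac{(q^{r+1}-1)^{k/2-1}(q^{r-1}-1)^{k/2}}{2(q^r-1)^{k-1}}<\frac{1}{2q}\to 0 .
\]
You should replace your walk-counting sketch in (a) by an incidence count of this kind; as it stands, the "counting lemma" you defer to is both unproved and, in the form stated, incorrect.
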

In particular, if $(G,d)$ is admissible, then the sequence $\left(G^{\square n},nd\right)$ has large essential girth. For example,
 the  sequence $(Q_n,n)$ has large essential girth. More generally, if $G_n$ is any finite subgraph of the $n$-dimensional grid $\Z^n$, then the sequence $(G_n,2n)$ has large essential girth.

\begin{proof} (b) We make use of Remark~\ref{walk}. It suffices to prove that if we do in $G_n$ the random walk defined there, then for any fixed $k\ge 3$, the  probability that $v_{k-1}$ exists and is adjacent to $v_0$ tends to 0 as $n\to\infty$. 
Clearly, the Hamming distance of $v_0$ and $v_{k-1}$ will be $k-1\ge 2$ with probability tending to 1 as $n\to\infty$.
Indeed, when doing (at most) $k-1$ steps, the probability that there will be two steps in the same coordinate goes to zero as $n\to\infty$, because of \eqref{kicsiny}.

\medskip

(a) We omit the subscript $n$ for easier reading.  The regularity claim is clear since any hyperplane has $d$ points and any point is on $d$ hyperplanes.

We have $$\inj(C_k,G)\le \left(\frac{q^{r+1}-1}{q-1}\right)^{k/2} \left(\frac{q^{r-1}-1}{q-1}\right)^{k/2},$$ whence $$t_{\inj}(C_k, G, d)\le \frac{(q^{r+1}-1)^{k/2-1}(q^{r-1}-1)^{k/2}}{2(q^r-1)^{k-1}}\le\frac{q^{r-1}-1}{2(q^r-1)}<\frac1{2q}\to 0$$ as $n\to\infty$.
\end{proof}

Statement (a) is maybe a  bit surprising since in a  large girth 1-regular sequence $(G_n, d_n)$, the number  $\vv(G_n)$ of nodes would have to be superpolynomial in $d_n$, whereas for projective spaces we have  $d=(q^r-1)/(q-1)$ and $$\vv(G)=2(q^{r+1}-1)/(q-1)=2(qd+1)<2\left(d^{r/(r-1)}+1\right).$$  This means in particular that we cannot delete $o(\vv(G_n)d_n)$ edges from $G_n$ to make the sequence have large girth (if $r_n\ge 2$ for all $n$). This is in contrast to the bounded degree case. Thus, the word `essential' is essential. Another instance of this will be Proposition~\ref{funny}.


In other words, $\vv(G_n)/d_n$ can go to $\infty$ arbitrarily slowly (compared to $\vv(G_n)$ and $d_n$) in a  1-regular sequence of large essential girth: the dimension $r$ and thus the cardinality of the projective space can grow arbitrarily fast compared to the order $q$, while we have $\vv(G)/d\sim 2q$.
A  sequence of large essential girth can thus be almost dense. It it easy to see, however, that it cannot be dense:

\begin{Pro}\label{notdense}
If $(G_n,d_n)$ is an admissible sequence with  large essential girth, where  $\vv(G_n)\to\infty$ and $d_n=O(\vv(G_n))$, then
$$\frac {d_n}{\vv(G_n)}t(K_2,G_n,d_n)=\frac{2\e(G_n)}{\vv^2(G_n)}\to 0$$ as $n\to\infty$.
\end{Pro}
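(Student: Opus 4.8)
The displayed identity is immediate from $t(K_2,G,d)=2\e(G)/(\vv(G)d)$, so the task is to prove $2\e(G_n)/\vv^2(G_n)\to 0$. The plan is to argue by contradiction. Suppose that along some subsequence (which I keep denoting by $n$) we have $2\e(G_n)/\vv^2(G_n)\ge c$ for a fixed $c>0$, i.e.\ $(d_n/\vv(G_n))\,t(K_2,G_n,d_n)\ge c$. Since $t(K_2,G_n,d_n)\le 1$ this forces $d_n/\vv(G_n)\ge c$, and together with the hypothesis $d_n=O(\vv(G_n))$ we get $c\le d_n/\vv(G_n)\le C$ for a suitable constant $C$ along the subsequence; in particular $d_n\to\infty$, since $\vv(G_n)\to\infty$. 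So on this subsequence $(G_n,d_n)$ is dense-like, with $d_n\asymp\vv(G_n)$.

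The next step is to show that such a sequence has too many $4$-cycles, which I would do with two applications of Cauchy--Schwarz. First, $\hom(P_3,G)=\sum_v\deg(v)^2\ge(2\e(G))^2/\vv(G)$, which after normalisation is the inequality $t(P_3,G,d)\ge t(K_2,G,d)^2$ already used in the proof of Proposition~\ref{alphareg}. Second, counting a homomorphism $C_4\to G$ by the images of two opposite vertices gives $\hom(C_4,G)=\sum_{u,w\in V(G)}|N(u)\cap N(w)|^2$, while $\sum_{u,w}|N(u)\cap N(w)|=\sum_x\deg(x)^2=\hom(P_3,G)$; hence $\hom(C_4,G)\ge\hom(P_3,G)^2/\vv(G)^2$. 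Unwinding the normalisations yields
\[
t(C_4,G,d)\ \ge\ t(P_3,G,d)^2\,\frac{d}{\vv(G)}\ \ge\ t(K_2,G,d)^4\,\frac{d}{\vv(G)}.
\]
Applying this along the bad subsequence, and using $d_n/\vv(G_n)\ge c$ together with $t(K_2,G_n,d_n)\ge c/(d_n/\vv(G_n))\ge c/C$, we obtain $t(C_4,G_n,d_n)\ge (c/C)^3 c=c^4/C^3>0$, bounded away from zero.

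Finally I would invoke large essential girth: $t_{\inj}(C_4,G_n,d_n)\to 0$, and since $d_n\to\infty$ on the subsequence, Proposition~\ref{homversusinj} gives $t(C_4,G_n,d_n)=t_{\inj}(C_4,G_n,d_n)+O(1/d_n)\to 0$, contradicting the lower bound above. I do not anticipate a real obstacle; the only points needing a little care are the bookkeeping in the first paragraph (that $t(K_2)\le 1$ and $d_n=O(\vv(G_n))$ really do trap $d_n/\vv(G_n)$ between two positive constants on the offending subsequence, forcing $d_n\to\infty$ there) and the elementary but slightly fiddly conversion between raw homomorphism counts and the normalised densities $t(\cdot,G,d)$ in the $C_4$ estimate. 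The hypothesis $d_n=O(\vv(G_n))$ is genuinely needed: for $G_n=K_{m_n}$ with $d_n\gg m_n=\vv(G_n)$ the sequence has large essential girth, yet $2\e(G_n)/\vv^2(G_n)\to 1$.
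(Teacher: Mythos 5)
Your proof is correct and follows essentially the same route as the paper's: the paper invokes the standard inequality $t(K_2,G_n)\le \sqrt[4]{t(C_4,G_n)}$ (which is exactly your two Cauchy--Schwarz steps composed), uses $d_n=O(\vv(G_n))$ to compare $t(C_4,G_n)$ with $t(C_4,G_n,d_n)$, and kills the latter via large essential girth and Proposition~\ref{homversusinj}. The only organizational difference is that you run a contradiction argument to force $d_n\to\infty$ on the offending subsequence, while the paper disposes of bounded $d_n$ by the trivial bound $\frac{d_n}{\vv(G_n)}t(K_2,G_n,d_n)\le\frac{d_n}{\vv(G_n)}\to 0$.
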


\begin{proof} We have $$\frac{2\e(G_n)}{\vv^2(G_n)}=t(K_2,G_n)\le \sqrt[4]{t(C_4,G_n)}=O\left( \sqrt[4]{t(C_4,G_n,d_n)}\right),$$ where $$t(C_4,G_n,d_n)=t_{\inj}(C_4,G_n,d_n)+O(1/d_n)$$ and $t_{\inj}(C_4,G_n,d_n)\to 0$ as $n\to\infty$. Also, $$\frac {d_n}{\vv(G_n)}t(K_2,G_n,d_n)\le \frac {d_n}{\vv(G_n)}.$$ The Proposition follows.
\end{proof}

Further important examples of regular sequences of large essential girth are obtained by considering random graphs. Let $G(n,d)$ be the random  (almost) $d$-regular multigraph generated by the configuration model: we take $n$ nodes with $d$ legs emanating from each node, and take a uniform random perfect matching on the $dn$ legs (if $dn$ is odd, leave out a  leg). Let $G(n,d)^{\mathrm{simp}}$ be the underlying simple graph.

\begin{Pro}\label{random} Fix $\epsilon>0$.
Let $d_n=O(n^{1-\epsilon})$ and let the random graph $G_n$ have the distribution of
$G(n,d_n)^{\mathrm{simp}}$.
Then, for any joint distribution of the $G_n$,  the sequence $(G_n,d_n)$  a.s.\ is 1-regular and has large essential girth.
\end{Pro}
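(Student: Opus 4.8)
The plan is to show both conclusions (1-regularity and large essential girth) hold almost surely for the configuration-model sequence, using first-moment and second-moment estimates together with Borel--Cantelli. Since the claim is about \emph{any} joint distribution of the $G_n$, it suffices to prove that for each fixed $k\ge 3$,
\[
\sum_{n}\Pr\!\left(t_{\inj}(C_k,G_n,d_n)>\delta\right)<\infty
\quad\text{for every }\delta>0,
\]
and similarly for the event $|t(K_2,G_n,d_n)-1|>\delta$ controlling 1-regularity; by Borel--Cantelli each bad event occurs only finitely often a.s., and intersecting over a countable family of $\delta\to 0$ and over $k$ gives the conclusion. So I would reduce everything to tail bounds on these graph parameters for a \emph{single} configuration-model graph $G(n,d)^{\mathrm{simp}}$ with $d=d_n=O(n^{1-\epsilon})$.

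\emph{Large essential girth.} By Remark~\ref{subgraphinj} it is enough to bound the density of a single cycle $C_k$. I would estimate $\mathbb E\,\inj(C_k,G(n,d))$ (counting in the multigraph is only easier, and the simple graph has no more injective copies) by a direct pairing computation: a potential injective image of $C_k$ is a choice of $k$ ordered distinct nodes and a choice of one leg at each end of each of the $k$ edges, and the probability that all $k$ required pairs occur in the random perfect matching is $\prod_{j=0}^{k-1}(dn-2j-1)^{-1}=(1+o(1))(dn)^{-k}$. This gives
\[
\mathbb E\,\inj(C_k,G(n,d))\le (1+o(1))\,\frac{n^k\,d^{2k}}{(dn)^k}=(1+o(1))\,d^k,
\]
hence $\mathbb E\,t_{\inj}(C_k,G_n,d_n)=O\!\big(d_n^{k}/(nd_n(d_n-1)^{k-2})\big)=O(d_n^2/n)=O(n^{-2\epsilon})\to 0$. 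Markov's inequality then gives $\Pr(t_{\inj}(C_k,G_n,d_n)>\delta)=O(\delta^{-1}n^{-2\epsilon})$, which is \emph{not} summable for $\epsilon\le 1/2$. To get summability I would instead go to a higher moment: bound $\mathbb E\big[\inj(C_k,G(n,d))^m\big]$ for a fixed large $m$ (the dominant term comes from $m$ vertex-disjoint copies and is $(1+o(1))d^{km}$, the overlapping configurations contributing lower order, again because $d=O(n^{1-\epsilon})$), so that $\mathbb E[t_{\inj}(C_k,\cdot)^m]=O(n^{-2\epsilon m})$, and choosing $m>1/(2\epsilon)$ makes $\Pr(t_{\inj}(C_k,G_n,d_n)>\delta)=O(n^{-2\epsilon m})$ summable. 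Borel--Cantelli finishes this half.

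\emph{1-regularity.} By Proposition~\ref{reg} it suffices to show $t(K_2,G_n,d_n)\to 1$ a.s., i.e.\ that the average degree of $G(n,d_n)^{\mathrm{simp}}$ is $(1-o(1))d_n$ a.s. In the multigraph every node has degree exactly $d_n$ (up to the single omitted leg), so $t(K_2,G(n,d_n),d_n)=1$ exactly (asymptotically); passing to the simple graph only removes edges lying in loops or multiple edges. Thus I need an upper bound on the expected number of loops plus "excess" edges from multiple edges. A standard pairing computation gives that the expected number of loops is $O(1)$ uniformly (each fixed node forms a loop with probability $\binom{d}{2}/(dn-1)\sim d/(2n)$, summing to $\sim d/2=O(n^{1-\epsilon})$, which is $o(nd)$) and the expected number of multi-edge excesses is $O(d^2)=o(nd)$ as well. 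Hence $\mathbb E[\,2\e(G_n)\,]=(1-o(1))\,n d_n$, and in fact the number of removed edges has expectation $o(nd_n)$; a Markov bound gives $\Pr(t(K_2,G_n,d_n)<1-\delta)=o(1)$, and upgrading to a.s.\ convergence again needs a second-moment (or a moment of order $m>1/\epsilon$) bound on the number of loops/multi-edges, which is routine since these are sums of weakly-dependent indicators.

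\emph{Main obstacle.} The only genuinely delicate point is getting summable tail bounds rather than mere convergence in probability: for small $\epsilon$ the first-moment estimates decay like $n^{-2\epsilon}$, too slowly for Borel--Cantelli, so one must control an $m$-th moment of $\inj(C_k,G(n,d))$ (resp.\ of the loop/multi-edge count) and check that the overlapping pairing patterns are lower-order precisely because $d_n=O(n^{1-\epsilon})$. This is a standard but slightly tedious configuration-model moment computation; everything else follows from Propositions~\ref{reg}, \ref{homversusinj} and Remark~\ref{walk} already established above.
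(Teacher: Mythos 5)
Your proposal follows essentially the same route as the paper: first-moment bounds are too weak for small $\epsilon$, so one bounds an $m$-th moment of the cycle density (dominant term from $m$ disjoint copies, overlapping configurations lower order) and of the loop/multi-edge counts, then applies Borel--Cantelli; the paper packages your ``tedious'' overlapping-configuration step precisely via Proposition~\ref{compinj}, which bounds $\mathbb E\, t_{\inj}^r(C_k,G,d)$ by $\mathbb E\, t_{\inj}(C_k^r,G,d)$ plus explicit lower-order quotient terms. Only minor bookkeeping differs: the first moment is $\mathbb E\, t_{\inj}(C_k,G_n,d_n)=O(d_n/n)=O(n^{-\epsilon})$ (not $O(d_n^2/n)$ or $O(n^{-2\epsilon})$), so you need $m>1/\epsilon$ rather than $m>1/(2\epsilon)$, and the expected number of loops is $O(d_n)$ rather than $O(1)$ --- neither change affects the argument.
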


\begin{proof} Let $L$ be the proportion of loops among the edges of $G(n,d_n)$. It is easy to see that $\mathbb EL^2=O(1/n^2)$, whence $L\to 0$ a.s. 

Let $r$ be so large that $\sum (d_n/n)^r<\infty$.

For easier reading, we omit the subscript $n$ from $d_n$ and $G_n$.

Let $M$ be the proportion of edges in $G(n,d)$ that have  a parallel edge.
We have $\mathbb EM^r=O((d/n)^r)$, whence $M\to 0$ a.s. 
Thus, the sequence is a.s.\ 1-regular.

Let $k\ge 3$.
Using Proposition~\ref{compinj}, we have \[\mathbb E t_{\inj}^r(C_k, G, d)\le\mathbb E t_{\inj}(C_k^r, G, d)+
\frac1{n}\sum_{F'}\mathbb Et_{\inj}(F',G,d)+
O\left(\frac1{n^2}\right),\]  where  $C_k^r$ is the  union of $r$ pairwise disjoint $k$-cycles, and $F'$ runs over quotients of $C_k^r$ into which each component $C_k$ maps injectively, such that $F'$ has $r-1$ components. It is easy to see that $\mathbb E t_{\inj}(C_k^r, G, d)=O((d/n)^r)$ and $\mathbb Et_{\inj}(F',G,d)=O((d/n)^{r-1})$ for each $F'$, whence $\sum\mathbb E t_{\inj}^r(C_k, G, d)<\infty$ and therefore $ t_{\inj}(C_k, G, d)\to 0$ a.s.
\end{proof}

This concludes our set of examples of sequences with large essential girth.
Putting  together Propositions~\ref{subgraphinj}, \ref{homversusinj},  and  \ref{alphareg},   we obtain
\begin{Prop}\label{girthdensity}
If $d_n\to\infty$ and $(G_n,d_n)$ is $\alpha$-regular and has large essential girth, then
 \begin{itemize}
  \item[(a)] the homomorphism density $t(F,G_n,d_n)$ and the injective  homomorphism density $t_{\inj}(F,G_n,d_n)$ converge to $\alpha^{\e(F)}$ for any tree $F$ and to 0 for any other connected $F$;

\item[(b)] the sequence $(G_n,d_n)$ is convergent.
\end{itemize}
 \end{Prop}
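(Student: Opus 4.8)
The plan is to obtain everything by combining the three cited facts with the fact that $t(\,\cdot\,,G,d)$ is multiplicative over the connected components of its first argument. I would split part (a) according to whether the connected graph $F$ is a tree.

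\emph{Trees.} If $F$ is a tree, it is in particular a forest, so the implication (a)$\Rightarrow$(c) in Proposition~\ref{alphareg} gives $t(F,G_n,d_n)\to\alpha^{\e(F)}$. Since $d_n\to\infty$, Proposition~\ref{homversusinj} (which says $t(F,G_n,d_n)-t_{\inj}(F,G_n,d_n)=O(1/d_n)$) forces $t_{\inj}(F,G_n,d_n)\to\alpha^{\e(F)}$ as well.

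\emph{Connected non-trees.} If $F$ is connected but not a tree, then $F$ contains a cycle $C_k$ with $k\ge 3$ as a (connected) subgraph. For all $n$ large enough to have $d_n>1$, Remark~\ref{subgraphinj} gives $t_{\inj}(F,G_n,d_n)\le t_{\inj}(C_k,G_n,d_n)$, and the right-hand side tends to $0$ by the definition of large essential girth. Hence $t_{\inj}(F,G_n,d_n)\to 0$, and then Proposition~\ref{homversusinj} again yields $t(F,G_n,d_n)=t_{\inj}(F,G_n,d_n)+O(1/d_n)\to 0$. This proves (a).

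For part (b), note that directly from Definition~\ref{t} one has, for a graph $F$ with connected components $F_i$, the exact identity $t(F,G_n,d_n)=\prod_i t(F_i,G_n,d_n)$ (the denominators match component by component). By (a) every factor on the right converges, hence so does the finite product; thus $t(F,G_n,d_n)$ converges for \emph{every} graph $F$, which is exactly convergence of $(G_n,d_n)$ in the sense of Definition~\ref{t}. There is no genuinely hard step: the only points needing a word of care are that Remark~\ref{subgraphinj} requires $d>1$ (harmless, as $d_n\to\infty$) and that "not a tree" must be read, for connected $F$, as "contains a cycle subgraph", which is immediate.
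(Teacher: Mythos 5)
Your proposal is correct and follows exactly the route the paper intends: the paper's entire "proof" is the sentence "Putting together Propositions~\ref{subgraphinj}, \ref{homversusinj}, and \ref{alphareg}, we obtain...", and you have supplied precisely the missing details (tree case via Proposition~\ref{alphareg} and Proposition~\ref{homversusinj}; cyclic case via Remark~\ref{subgraphinj} and the definition of large essential girth; part (b) via the built-in multiplicativity of $t$ over components in Definition~\ref{t}). Nothing to add.
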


In particular, hypercubes, or --- more generally --- Cartesian powers of a fixed graph, or  grids of size $n\times\cdots\times n$ where both  $n$ and the dimension tend to $\infty$, or point-hyperplane incidence graphs of projective spaces whose order tends to $\infty$, or the random graphs of Proposition~\ref{random}, form convergent sequences.

\section{Convergence of spectra}
Let $\sigma_{G,d}$ be the uniform probability measure  on the $\vv (G)$ numbers $\la/d$, where $\la$ runs over the eigenvalues of $G$. If  $(G, d)$ is admissible, then all eigenvalues of $G$ are in $[-d,d]$, so  $\sigma_{G,d}$ is supported on $[-1,1]$.
We have $$\epsilon^2\sigma_{G,d}(\{x:|x|\ge \epsilon\})\le\int_{-1}^1 x^2\mathrm d \sigma_{G,d}(x)=\frac{2\e(G)}{\vv (G)d^2}=\frac{t(K_2,G,d)}d\le \frac1d.$$
This proves
\begin{Pro}\label{spectrum} Let $(G_n, d_n)$ be an admissible  sequence with $d_n\to\infty$. Then the measure $\sigma_n=\sigma_{G_n,d_n}$ converges weakly to the Dirac measure at 0. More precisely, $$\sigma_n((-\epsilon, \epsilon))\ge 1-1/(\epsilon^2d_n)$$ for all $n$ and all $\epsilon>0$.
\end{Pro}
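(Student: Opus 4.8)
The plan is to establish the moment bound displayed immediately before the statement and then deduce both the quantitative estimate and the weak convergence from it. First I would observe that, since $(G_n,d_n)$ is admissible, every eigenvalue $\lambda$ of $G_n$ lies in $[-d_n,d_n]$, so each number $\lambda/d_n$ lies in $[-1,1]$ and $\sigma_n$ is a probability measure on $[-1,1]$; in particular $\int x^2\,\mathrm d\sigma_n(x)$ is finite and at most $1$. Next I would compute this second moment: averaging $(\lambda/d_n)^2$ over the eigenvalues gives $\frac{1}{\vv(G_n)d_n^2}\sum_\lambda \lambda^2 = \frac{\operatorname{tr}(A_{G_n}^2)}{\vv(G_n)d_n^2} = \frac{2\e(G_n)}{\vv(G_n)d_n^2}$, since the trace of the square of the adjacency matrix counts closed walks of length $2$, i.e.\ twice the number of edges. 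Rewriting, this equals $t(K_2,G_n,d_n)/d_n \le 1/d_n$ because $t(K_2,G_n,d_n)\in[0,1]$.

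With the moment bound in hand, the quantitative claim is just Chebyshev/Markov applied to $\sigma_n$: for any $\epsilon>0$,
\[
\epsilon^2\,\sigma_n(\{x:|x|\ge\epsilon\}) \le \int_{-1}^1 x^2\,\mathrm d\sigma_n(x) \le \frac1{d_n},
\]
so $\sigma_n((-\epsilon,\epsilon)) = 1 - \sigma_n(\{x:|x|\ge\epsilon\}) \ge 1 - 1/(\epsilon^2 d_n)$, which is exactly the asserted inequality. Finally, to conclude weak convergence to $\delta_0$, I would note that for any bounded continuous $f$ on $[-1,1]$ and any $\epsilon>0$, splitting the integral $\int f\,\mathrm d\sigma_n$ over $(-\epsilon,\epsilon)$ and its complement and using $|f|\le \|f\|_\infty$ together with the mass bound gives $\left|\int f\,\mathrm d\sigma_n - f(0)\right| \le \sup_{|x|<\epsilon}|f(x)-f(0)| + 2\|f\|_\infty/(\epsilon^2 d_n)$; letting $n\to\infty$ and then $\epsilon\to 0$ (using continuity of $f$ at $0$) shows $\int f\,\mathrm d\sigma_n \to f(0)$, i.e.\ $\sigma_n \Rightarrow \delta_0$.

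There is essentially no obstacle here: every ingredient is elementary. The only point requiring a moment's care is the identification $\operatorname{tr}(A^2) = 2\e(G)$ and hence $\int x^2\,\mathrm d\sigma_{G,d} = 2\e(G)/(\vv(G)d^2) = t(K_2,G,d)/d$, which is precisely the chain of equalities the paper records just above the proposition; once that is granted, the rest is Chebyshev plus the trivial bound $t(K_2,G_n,d_n)\le 1$ and the standard fact that control of tail mass around a point forces weak convergence to the Dirac mass there.
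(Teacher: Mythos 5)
Your proposal is correct and follows exactly the paper's route: the paper's entire proof is the displayed Chebyshev estimate $\epsilon^2\sigma_{G,d}(\{x:|x|\ge\epsilon\})\le\int x^2\,\mathrm d\sigma_{G,d}=2\e(G)/(\vv(G)d^2)=t(K_2,G,d)/d\le 1/d$ given just before the statement. You merely spell out the trace computation and the standard deduction of weak convergence to $\delta_0$, both of which the paper leaves implicit.
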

This is probably well known but I couldn't find a reference.

Up to now, we used only the second moment of $\sigma_{G,d}$, but to get more precise results for convergent sequences, we shall need the other moments as well. The zeroth moment is 1, the first moment is 0, and we have $$\int x^k\mathrm d\sigma_{G,d}(x)=\frac{\hom (C_k,G)}{\vv (G)d^k}=\frac{t(C_k,G,d)}d$$ for $k\ge 3$. In fact, this formula holds for $k\ge 1$ if we agree that $C_2=K_2$ and $C_1$ is a  node with a loop. We infer
\begin{Lemma}\label{W}
Let $(G_n,d_n)$ be convergent and $g:[-1,1]\to \R$ be continuous. 
 Then $$d_n\int_{-1}^1 x^2g(x)\mathrm d \sigma_{G_n,d_n}(x)$$
converges as $n\to\infty$.
\end{Lemma}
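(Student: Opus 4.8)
The plan is to reduce to the case where $g$ is a polynomial via the Weierstrass approximation theorem, deal with polynomials directly through the moment formula recorded just before the lemma, and control the approximation error uniformly in $n$.

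First I would treat monomials. For every integer $k\ge 2$ we have $d\int_{-1}^1 x^k\,\mathrm d\sigma_{G,d}(x)=t(C_k,G,d)$, with the convention $C_2=K_2$. Hence for any $j\ge 0$,
\[d_n\int_{-1}^1 x^2 x^j\,\mathrm d\sigma_{G_n,d_n}(x)=d_n\int_{-1}^1 x^{j+2}\,\mathrm d\sigma_{G_n,d_n}(x)=t(C_{j+2},G_n,d_n),\]
which converges as $n\to\infty$ because $(G_n,d_n)$ is convergent. By linearity, for every polynomial $p$ the sequence $d_n\int x^2 p(x)\,\mathrm d\sigma_{G_n,d_n}(x)$ converges.

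Next, given $\epsilon>0$, choose by Weierstrass a polynomial $p$ with $\sup_{[-1,1]}|g-p|<\epsilon$. Then for every $n$,
\[\left|d_n\int x^2 g\,\mathrm d\sigma_{G_n,d_n}-d_n\int x^2 p\,\mathrm d\sigma_{G_n,d_n}\right|\le d_n\,(\sup|g-p|)\int x^2\,\mathrm d\sigma_{G_n,d_n}=(\sup|g-p|)\,t(K_2,G_n,d_n)\le\epsilon,\]
using $d_n\int x^2\,\mathrm d\sigma_{G_n,d_n}=t(K_2,G_n,d_n)\in[0,1]$. Thus the sequence $L_n:=d_n\int x^2 g\,\mathrm d\sigma_{G_n,d_n}$ stays within $\epsilon$ of the convergent — hence Cauchy — sequence $d_n\int x^2 p\,\mathrm d\sigma_{G_n,d_n}$, so $\limsup_{n,m\to\infty}|L_n-L_m|\le 2\epsilon$. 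Since $\epsilon>0$ was arbitrary, $(L_n)$ is Cauchy and therefore converges.

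There is no real obstacle here; the one point worth flagging is the uniform-in-$n$ bound $d_n\int x^2\,\mathrm d\sigma_{G_n,d_n}=t(K_2,G_n,d_n)\le 1$, which is what lets the polynomial approximation transfer to $g$. Note also that the factor $x^2$ in the statement is essential: dropping it would force us to control $d_n\int 1\,\mathrm d\sigma_{G_n,d_n}=d_n$, which is unbounded, so the constant term in the approximating polynomial would spoil the argument.
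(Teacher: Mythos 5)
Your proof is correct and follows the same route as the paper: reduce to monomials via the identity $d_n\int x^{k+2}\,\mathrm d\sigma_{G_n,d_n}=t(C_{k+2},G_n,d_n)$ and then apply Weierstrass approximation. You simply spell out the error-transfer step (the uniform bound $d_n\int x^2\,\mathrm d\sigma_{G_n,d_n}=t(K_2,G_n,d_n)\le 1$) that the paper leaves implicit.
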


\begin{proof}
For $g(x)=x^k$, $k\ge 0$, the statement is clear from the preceding discussion. The general case follows by the Weierstrass approximation theorem.
\end{proof}

For Benjamini--Schramm convergent graph sequences ($d_n$ independent of $n$), it is well known that the spectral measure converges weakly, to a nontrivial measure in general. 

 For convergent dense graph sequences,  C.\ Borgs, J.\ T.\ Chayes, L.\ Lov\'asz, V.\ T.\ S\'os,  and K.\ Vesztergombi (see \cite[Subsection 6.3]{BCLSV} and \cite[Section 11.6]{L}) have given a much more precise description of the limiting behavior of the spectrum  than the one in Proposition~\ref{spectrum}. Namely, the $k$-th largest (resp.\ $k$-th smallest) eigenvalue,  divided by the number of nodes, converges to the $k$-th largest (smallest) eigenvalue of the limiting graphon, which is nonnegative (nonpositive).

We shall now show that these two results  (bounded degree and  dense) carry over to intermediate density  --- at least partially: we don't (yet) have limit objects, cf.\ Section~\ref{limobj}.

Let $G$ have all degrees $\le d$. Let $1\le r\le \vv (G) $ be an integer. Let $\sigma_{G,d,r}$ and $\sigma'_{G,d,r}$ be the uniform probability measures on the numbers $\la/d$, where $\la$ runs over the $r$ largest and $r$ smallest eigenvalues of $G$, respectively. These measures are supported on $[-1,1]$. Note that $\sigma_{G,d,\vv (G)}=
\sigma'_{G,d,\vv (G)}=\sigma_{G,d}$. For any $r$, the probability measures $\sigma_{G,d,r}$ and $\sigma'_{G,d,r}$ are  the restrictions of the measure $(\vv(G)/r)\sigma_{G,d}$ to the intervals $\left[\la_r/d,1\right]$ and $\left[-1,\la_{\vv(G)-r+1}\right]$, respectively.

\begin{Th}\label{specconv}
Let $(G_n,d_n)$ be a convergent sequence with $d_n\to\infty$.  Let $1\le r_n\le \vv (G_n)$ $(n=1,2,\dots)$ be  integers such that $r_nd_n/\vv (G_n)$ converges to a positive limit $\alpha$. Then the measures $\sigma_n=\sigma_{G_n,d_n,r_n}$ and $\sigma'_n=\sigma'_{G_n,d_n,r_n}$ converge weakly to  probability measures $\sigma$ supported  on $[0,1]$ and $\sigma'$ supported  on $[-1,0]$, respectively.
\end{Th}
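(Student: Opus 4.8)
\emph{Proof strategy.} The plan is to reduce the assertion to the pointwise convergence of the survival functions $s\mapsto\sigma_n([s,1])$ and $s\mapsto\sigma'_n([-1,s])$, which in one dimension is equivalent to weak convergence. Write $\tau_n=(\vv(G_n)/r_n)\,\sigma_{G_n,d_n}$ for the rescaled full spectral measure. Reading $\sigma_n$ as the uniform measure on the $r_n$ largest eigenvalues of $G_n$ taken with multiplicity and divided by $d_n$, one has the identity
\[\sigma_n([s,1])=\min\bigl(\tau_n([s,1]),\,1\bigr)\qquad(-1\le s\le 1),\]
since for any $s$ the eigenvalues that are $\ge sd_n$ all lie among the top $r_n$ precisely when there are at most $r_n$ of them. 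Thus it suffices to compute $\lim_n\tau_n([s,1])$ for a dense set of $s$ avoiding a prescribed countable set, together with the behaviour for $s\le 0$.

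First I would extract the limit measure from Lemma~\ref{W}. The measures $\nu_n$ on $[-1,1]$ defined by $\mathrm d\nu_n(x)=d_n x^2\,\mathrm d\sigma_{G_n,d_n}(x)$ have total mass $t(K_2,G_n,d_n)\le 1$, and by Lemma~\ref{W} (applied first to $g(x)=x^k$ and then in general by the Weierstrass theorem) the integral $\int g\,\mathrm d\nu_n$ converges for every $g\in C([-1,1])$. By the Riesz representation theorem the limit functional is $g\mapsto\int g\,\mathrm d\nu$ for a finite Borel measure $\nu$, so $\nu_n\to\nu$ weakly. Since $x^2\tau_n=\frac{\vv(G_n)}{r_nd_n}\,\nu_n$ and $r_nd_n/\vv(G_n)\to\alpha>0$, we get $x^2\tau_n\to\frac1\alpha\nu$ weakly. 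Now fix $s\in(0,1]$ that is not an atom of $\nu$ (all but countably many such $s$). The function $g_s(x)=\mathbb 1_{[s,1]}(x)\,x^{-2}$ is bounded on $[-1,1]$ and continuous off the single point $s$, which is $\nu$-null; hence, by the bounded-and-a.e.-continuous form of weak convergence,
\[\tau_n([s,1])=\int g_s(x)\,x^2\,\mathrm d\tau_n(x)\ \longrightarrow\ \frac1\alpha\int_{[s,1]}x^{-2}\,\mathrm d\nu(x)=:\Psi(s)<\infty.\]
Consequently $\sigma_n([s,1])\to\min(\Psi(s),1)=:\Phi(s)$ for every such $s$.

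For $s\le 0$ I would use Proposition~\ref{spectrum}: the number of eigenvalues $\le-\varepsilon d_n$ is at most $\vv(G_n)/(\varepsilon^2 d_n)$, whereas $r_n\sim\alpha\vv(G_n)/d_n=o(\vv(G_n))$ because $d_n\to\infty$; so for each $\varepsilon>0$ eventually more than $r_n$ eigenvalues exceed $-\varepsilon d_n$, giving $\lambda_{r_n}/d_n>-\varepsilon$. Hence $\liminf_n\lambda_{r_n}/d_n\ge 0$, so $\sigma_n([s,1])=1$ for large $n$ whenever $s<0$; put $\Phi(s)=1$ for $s\le 0$. Then $\Phi$ is non-increasing, $[0,1]$-valued, equal to $1$ on $[-1,0]$, and its discontinuities in $(0,1]$ occur only at atoms of $\nu$ (truncation by $\min(\cdot,1)$ creates no new jump), so $\Phi$ is the survival function of a probability measure $\sigma$ supported on $[0,1]$. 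We have shown $\sigma_n([s,1])\to\Phi(s)$ at every continuity point of $\Phi$ — for $s=0$ this follows by sandwiching $\sigma_n([0,1])$ between $\sigma_n([s,1])$ for small $s>0$ and $1$ — which is exactly the one-dimensional criterion for $\sigma_n\to\sigma$ weakly. The statement for $\sigma'_n$ follows by the symmetric argument: the reasoning uses the spectrum only through Proposition~\ref{spectrum} and through the convergence of the moments $d_n\int x^k\,\mathrm d\sigma_{G_n,d_n}(x)=t(C_k,G_n,d_n)$ supplied by Lemma~\ref{W}, and replacing every eigenvalue $\lambda$ by $-\lambda$ negates the odd moments while leaving Proposition~\ref{spectrum} intact, so the same steps give $\sigma'_n\to\sigma'$ with $\sigma'$ supported on $[-1,0]$.

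The one delicate point is the bookkeeping forced by the cap $\min(\cdot,1)$: one must check that this cap preserves convergence (it does, $x\mapsto\min(x,1)$ being continuous) and that the resulting $\Phi$ really is the survival function of a probability measure with the claimed support and with its jump set confined to the countable exceptional set we removed. All of this is routine measure theory, and the only substantive input — convergence of the cyclic moments $d_n\int x^k\,\mathrm d\sigma_{G_n,d_n}$ — is already provided by Lemma~\ref{W}, so I do not anticipate a genuine obstacle beyond handling these technicalities (including the harmless ambiguity of $\sigma_{G,d,r}$ at ties) carefully.
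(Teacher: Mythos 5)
Your proposal is correct and follows essentially the same route as the paper: both rest on the identity $\sigma_n([s,1])=\min\bigl((\vv(G_n)/r_n)\,\sigma_{G_n,d_n}([s,1]),1\bigr)$, on Lemma~\ref{W} applied to test functions supported away from $0$, and on an eigenvalue count forcing $\la_{r_n}/d_n\to 0$ from below. The only difference is presentational: the paper proves weak convergence indirectly via the sandwich inequality $\liminf\sigma_n([a,1])\ge\limsup\sigma_n([b,1])$ with continuous test functions, whereas you identify the limit explicitly as $\min\bigl(\frac1\alpha\int_{[\cdot,1]}x^{-2}\,\mathrm d\nu,1\bigr)$ plus a possible atom at $0$ --- a small bonus, not a different method.
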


\begin{proof}
We only treat  $\sigma_n$ since everything  works the same way for $\sigma_n'$.

Let $\la_r$ be the $r$-th largest eigenvalue of the graph $G$ with all degrees $\le d$.  We have $$0=\sum_{i=1}^{\vv(G)}\la_i\le (\vv(G)-r)\la_r+rd,$$ whence \begin{equation*}\frac{\la_r}d\ge-\frac{r}{\vv(G)-r}.\end{equation*}

Thus, 
  the measure $\sigma_n$ is supported on the halfline with left endpoint
 $$-\frac{r_n}{\vv(G_n)-r_n}\to 0$$ since
 $r_n/\vv(G_n)\sim\alpha/d_n\to 0$.
 Thus, it suffices to show that for any $0 <a<b<1$, we have  \begin{equation}\label{nemlotyog}\liminf\sigma_n([a,1])\ge \limsup  \sigma_n([b,1]).\end{equation} Let $g:[-1,1]\to [0,1]$ be continuous, nondecreasing, $g(a)=0$, $g(b)=1$.

 For all $n$, either $\sigma_n([a,1])=1$ or $$\sigma_n([a,1])= (\vv(G_n)/r_n)\sigma_{G_n,d_n}([a,1]).$$ Hence,  $\liminf\sigma_n([a,1])=1$ or  $$\liminf\sigma_n([a,1])\ge \liminf\left( \frac{d_n}\alpha\int g\mathrm d \sigma_{G_n,d_n}\right)=\frac1\alpha\lim \left(d_n\int g\mathrm d \sigma_{G_n,d_n}\right).$$ On the other hand, for all $n$, we have  $\sigma_n([b,1])\le 1$
 and $$\sigma_n([b,1])\le\int g\mathrm d\sigma_n\le \frac{\vv(G_n)}{r_n}\int g\mathrm d\sigma_{G_n,d_n}.$$
Hence,  $\limsup\sigma_n([b,1])\le 1$ and  $$\limsup\sigma_n([b,1])\le \limsup\left( \frac{d_n}\alpha\int g\mathrm d \sigma_{G_n,d_n}\right)=\frac1\alpha\lim \left(d_n\int g\mathrm d \sigma_{G_n,d_n}\right),$$ and the inequality~\eqref{nemlotyog} follows.\end{proof}

\section{Graph polynomials}\label{graphpol}
The convergence of a  sequence $(G_n,d_n)$ was defined in Section~\ref{Def} by the convergence of certain graph parameters, the homomorphism densities. This forces certain further parameters to converge (sometimes only under further conditions); such parameters are called \it estimable \rm (some parameters are only estimable for a certain class of convergent sequences). Theorem~\ref{specconv} can be thought of as an estimability statement. In this section, we present some more estimable parameters.

Following the paper \cite{csf} by P.\ Csikv\'ari and the present author, let $f$ be an isomorphism-invariant monic multiplicative graph polynomial of linearly bounded exponential type. I.e., \begin{itemize}\item for every graph $G$, a monic polynomial $f(G,x)\in\C [x]$ of degree $\vv (G)$ is given,
\item $f(G_1,x)=f(G_2,x)$ if $G_1\simeq G_2$,

\item $f(G_1\cup G_2, x)=f(G_1,x)f(G_2,x)$ for any disjoint union,

 \item \[f(G, x+y)=\sum_{S\subseteq V(G)} f(G[S],x)f(G[V(G)-S],y)\] for all $G$, and finally

 \item  \begin{equation}\label{bound} \sum \{|f'(G[S],0)|  :  v\in S\subseteq V(G), |S|=t\}\le (cd)^{t-1}\end{equation} for all $G$ with maximal degree $\le d$, all $v\in V(G)$, and all $t\ge 1$, with a constant $c$ depending only on $f$.\end{itemize}

Examples include
the chromatic,  adjoint, and Laplacian characteristic polynomials, and also the modified matching polynomial  defined as
$$M(G,x)=\sum_{k=0}^{\lfloor \vv (G)/2\rfloor} (-1)^km_k(G)x^{\vv(G)-k},$$
where $m_k(G)$ is the number of matchings in $G$ that consist of  $k$ edges.

 The characteristic polynomial $f(G,x)=\det(xI-A_G)$  of the adjacency matrix of $G$ is not a valid  example  because it  is not of exponential type. Nevertheless everything that follows, including Theorem~\ref{polconv} below, applies to this case in a trivial way; in fact, much more is true, even without assuming graph convergence, as we have seen in Proposition~\ref{spectrum}. 

We wish to study the distribution of roots of $f(G,x)$. By \cite[Theorem 1.6]{csf}, we can choose a constant $C$ depending only on $f$ such that for any $G$, all roots have absolute value $\le Cd$. It is shown there that $C=7.04\cdot c$ is an appropriate choice if $c$ is the constant in \eqref{bound}. For some of the specific graph polynomials  mentioned above, smaller appropriate values of $C$ are known. 

Let $p_k(G)$ be the $k$-th power sum of the roots of $f(G,x)$.
By \cite[Theorem 5.6.(b)]{csf}, for each $k\ge 1$, there exist constants $c_k(F)$ such that \begin{equation}\label{powersum}p_k(G)=\sum_{2\le \vv (F)\le k+1}c_k(F)\inj(F,G)\end{equation} for all $G$, where $F$ runs over the isomorphism classes of connected graphs
.
We also have $$p_0(G)=\vv (G)=c_0(K_1)\inj(K_1, G),$$ where $c_0(K_1)=1$.

Let $\nu_{G,d}$ be the uniform probability measure on the points $\lambda/d$, where $\la$ runs over the roots of $f(G,x)$. This measure is supported
on the disc of radius $C$ and has $k$-th holomorphic moment
\begin{equation}\label{moment}
\begin{aligned}
\int z^k d\nu_{G,d}(z)=\frac 1{\vv (G)}\sum_{f(G,\la)=0} \frac  {\la ^k}{d^k}=\frac {p_k(G)}{\vv (G)d^k}=\sum_{2\le \vv (F)\le k+1}c_k(F)\frac{\inj (F,G)}{\vv (G)d^k}=\\
=\sum_{2\le \vv (F)\le k+1}c_k(F)\frac{(d-1)^{\vv (F)-2}}{d^{k-1}}t_{\inj}(F,G,d)
\end{aligned}
\end{equation}for $k\ge 1$.

\begin{Th}\label{polconv} Let  $d_n\to\infty$. Let $(G_n, d_n)$ be a  convergent sequence, or, more generally, an admissible  sequence such that $t(F, G_n, d_n)$ converges whenever $c_{\vv (F)-1}(F)\ne 0$. Write $$t(F)=\lim_{n\to\infty}
 t (F,G_n,d_n)$$ for the limiting homomorphism density. Set $\nu_n=\nu_{G_n,d_n}$.
\begin{enumerate}

\item For all $k\ge 0$, we have 
 $$\int z^k d\nu_n(z)\to\sum_{ \vv (F)= k+1}c_k(F) t (F)
$$ as $n\to\infty$.

\item For any function $g(z)$ that is continuous for $|z|\le C$ and harmonic for $|z|<C$, the integral $\int g(z)d\nu_n(z)$ converges as $n\to\infty$.

    \item For any $|\xi|>C$, the normalized absolute value  $$\frac{\sqrt[\vv (G_n)]{|f(G_n,\xi d_n)|}}{d_n}$$ of $f$         converges to a positive limit.

        \item\label{weak} If $f(G_n,x)$ has only real roots for all $n$, then $\nu_n$ converges weakly.
\end{enumerate}
\end{Th}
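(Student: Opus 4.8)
The strategy is to prove the four parts in the order stated, since each one builds on the previous. Part (1) is essentially a restatement of the moment computation \eqref{moment}: feeding the hypothesis that $t(F,G_n,d_n)$ converges whenever $c_{\vv(F)-1}(F)\neq 0$, and using Corollary~\ref{hominj} to replace $t_{\inj}$ by $t$ (legitimate since $d_n\to\infty$), every term with $\vv(F)\le k$ carries a factor $(d-1)^{\vv(F)-2}/d^{k-1}=O(1/d_n)\to 0$, while the terms with $\vv(F)=k+1$ have coefficient $(d-1)^{k-1}/d^{k-1}\to 1$. So $\int z^k\,d\nu_n(z)\to\sum_{\vv(F)=k+1}c_k(F)t(F)$. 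One subtlety: in those surviving terms we need $t(F,G_n,d_n)$ to converge; this is exactly guaranteed by the hypothesis precisely when $c_k(F)=c_{\vv(F)-1}(F)\neq 0$, and when $c_k(F)=0$ the term drops out, so the limit exists regardless.

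For part (2), I would invoke the fact that on the disc $|z|\le C$, the holomorphic monomials $z^k$ together with their conjugates $\bar z^k$ span a uniformly dense subspace of the real-harmonic functions continuous on the closed disc (by the Stone--Weierstrass theorem applied to $\mathrm{Re}\,z,\ \mathrm{Im}\,z$, or equivalently by Fourier expansion on circles combined with the maximum principle / Poisson representation for the radial dependence). By part (1) the integrals $\int z^k\,d\nu_n$ converge; since $\nu_n$ is a probability measure supported in a fixed compact set, $\int\bar z^k\,d\nu_n=\overline{\int z^k\,d\nu_n}$ also converges. Hence $\int g\,d\nu_n$ converges for every harmonic $g$ continuous on the closed disc, by uniform approximation together with the uniform mass bound $\nu_n(\text{disc})=1$.

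For part (3), fix $|\xi|>C$. The normalized log-absolute value is
\[
\log\frac{\sqrt[\vv(G_n)]{|f(G_n,\xi d_n)|}}{d_n}
=\frac1{\vv(G_n)}\sum_{f(G_n,\la)=0}\log\Bigl|\xi-\frac{\la}{d_n}\Bigr|
=\int \log|\xi-z|\,d\nu_n(z),
\]
using that $f$ is monic of degree $\vv(G_n)$. Since $|\xi|>C\ge|z|$ on the support of $\nu_n$, the function $z\mapsto\log|\xi-z|$ is harmonic and continuous on the closed disc $|z|\le C$, so part (2) applies and the integral converges; exponentiating gives a limit, which is positive (indeed bounded below by $\log(|\xi|-C)>-\infty$, so the limit of the original quantity is $\ge|\xi|-C>0$).

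Part (4) is where the real work is, and I expect it to be the main obstacle. When all roots of $f(G_n,x)$ are real, $\nu_n$ is supported on the real interval $[-C,C]$, so weak convergence of $\nu_n$ is equivalent to convergence of all ordinary moments $\int x^k\,d\nu_n$ — but these are exactly the holomorphic moments from part (1), which do converge. Since the $\nu_n$ all live in the fixed compact $[-C,C]$, the moment problem is determinate and the sequence is tight, so convergence of all moments to the moments of some limiting measure (which exists by a standard subsequential/Helly argument combined with moment-determinacy on a compact interval) yields weak convergence. The only thing to be careful about is that we genuinely have \emph{all} moments converging, not just those with nonzero leading coefficient: if $c_k(F)=0$ for the relevant $F$ the $k$-th moment term still tends to a (possibly different-looking but existing) limit, because the vanishing-coefficient terms simply contribute $0$ and the $O(1/d_n)$ terms vanish. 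Hence every moment converges and weak convergence on $[-C,C]$ follows. I would present part (4) in exactly this order: reduce to real moments, note they converge by (1), invoke tightness plus moment-determinacy on a compact interval.
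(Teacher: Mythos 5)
Your proposal is correct and follows essentially the same route as the paper: part (1) from the moment formula \eqref{moment} together with Proposition~\ref{homversusinj} to pass between $t_{\inj}$ and $t$, part (2) by uniform approximation of harmonic functions by harmonic polynomials (real parts of polynomials), part (3) by writing the normalized logarithm as $\int\log|\xi-z|\,\mathrm d\nu_n$ and applying (2), and part (4) by moment convergence on the fixed compact interval $[-C,C]$. The only nitpick is that Stone--Weierstrass alone does not yield approximation of harmonic functions by \emph{harmonic} polynomials (it gives density of all polynomials in $z,\bar z$ among all continuous functions); the dilation-plus-Poisson argument you also mention is the right justification.
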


For the bounded degree case, the analogous theorem is \cite[Theorem 1.10]{csf}, which, in turn, was a  generalization (with a simpler proof) of  the result of M.\ Ab\'ert and T.\ Hubai \cite[Theorems 1.1, 1.2]{ah}, who first discovered this phenomenon in the case of the chromatic polynomial. For the dense case, essentially the same was proved by P.\ Csikv\'ari, J.\ Hladk\'y, T.\ Hubai and the author in \cite[Theorems 1.4, 1.5, 4.3]{csfhh}, using the approach of \cite{csf}.  The proof carries over to  intermediate density almost unchanged.

\begin{proof}
\begin{enumerate}
\item\label{momentconv} The 0-th moment is always 1. Let  $k\ge 1$. From \eqref{moment}, we have \begin{equation*}\begin{aligned}\int z^k d\nu_{n}(z)=\sum_{2\le \vv (F)\le k+1}c_k(F)\frac{(d_n-1)^{\vv (F)-2}}{d_n^{k-1}}t_{\inj}(F,G_n,d_n)\to  \sum_{ \vv (F)= k+1}c_k(F) t (F)
\end{aligned}\end{equation*} as $n\to\infty$.

\item\label{harm} The claim follows from \eqref{momentconv} because $g(z)$ can be uniformly approximated by real parts of polynomials.

\item  For any $G$, $d$, and $\xi$, we have $$\log \frac{\sqrt[\vv (G)]{|f(G,\xi d)|}}{d}=\frac1{\vv (G)}\log\frac{|f(G,\xi d)|}{d^{\vv (G)}}=\frac1{\vv (G)}\log\prod_{i=1}^{\vv (G)} \left|\xi-\frac{\la_i}d\right|,$$ where the $\la_i$ are the roots of $f(G,x)$.  The last expression  can be rewritten as $$\frac1{\vv (G)}\sum_{i=1}^{\vv (G)} \log\left|\xi-\frac{\la_i}d\right|=\int g(z)\mathrm d\nu_{G,d},$$ where $g(z)=\log |\xi - z|$. The claim now follows from the previous  statement \eqref{harm}.

\item The claim follows from \eqref{momentconv} because each $\nu_n$ is supported on the interval $[-C,C]$.
\end{enumerate}
\end{proof}

\subsection{Number of proper colorings (large essential girth case)}
We now wish to prove, for  intermediate density  graph sequences of large essential girth, a qualitative variant of Ab\'ert and Hubai's \cite[Theorem 1.4]{ah} about the asymptotic number of proper colorings. They only treated the large girth case, but gave an explicit bound on the error in their formula.

Let $\ch (G,x)$ be the chromatic polynomial of the graph $G$. I.e., for integral $q\ge 0$, $\ch(G,q)$ is the number of proper $q$-colorings of $G$.
\begin{Th} 
 Let $(G_n,d_n)$ be a sequence of large essential girth, such that $d_n\to\infty$ and 
  $$ t(K_2,G_n,d_n)=\frac{2\e(G_n)}{\vv(G_n)d_n}\to 
 t(K_2)$$ as $n\to\infty$. Let $|\xi|\ge 8$.
Then \begin{equation}\label{chvalue}\frac{\sqrt[\vv (G_n)]{|\ch(G_n,\xi d_n)|}}{|\xi|d_n}\to \exp(-t(K_2)\Re (1/2\xi)).
\end{equation}
\end{Th}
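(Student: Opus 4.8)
The plan is to reduce the statement to Theorem~\ref{polconv}(3) applied to the chromatic polynomial $f(G,x)=\ch(G,x)$, and then to identify the limit explicitly using the large essential girth hypothesis. Recall from Theorem~\ref{polconv}(3) that, since $|\xi|\ge 8> C=7.04c$ for the chromatic polynomial (for which $c=1$), the quantity $\sqrt[\vv(G_n)]{|\ch(G_n,\xi d_n)|}/d_n$ converges to a positive limit, and that limit is $\exp\left(\int \log|\xi-z|\,\mathrm d\nu(z)\right)$, where $\nu$ is the limiting root distribution described by its holomorphic moments $\int z^k\,\mathrm d\nu(z)=\sum_{\vv(F)=k+1}c_k(F)t(F)$. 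So the entire content is to evaluate $\int\log|\xi-z|\,\mathrm d\nu(z)$ under the large essential girth hypothesis.

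The key observation is that for a sequence of large essential girth with $d_n\to\infty$ and $t(K_2,G_n,d_n)\to t(K_2)$, Proposition~\ref{girthdensity}(a) (applied with $\alpha^{\e(F)}$ replaced appropriately: here the sequence need only satisfy $t(F,G_n,d_n)\to 0$ for connected $F$ with a cycle, and $t(F,G_n,d_n)\to t(K_2)^{\e(F)}$ would require full regularity — but in fact all we need is that $t(F)=0$ for non-tree connected $F$, which follows from large essential girth via Proposition~\ref{subgraphinj} and Proposition~\ref{homversusinj}, together with the fact that a connected non-tree graph contains a cycle). Thus in the moment formula only \emph{trees} $F$ survive in the limit. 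Moreover, for the chromatic polynomial the coefficients $c_k(F)$ for trees are completely explicit: if $F$ is a tree on $k+1$ vertices, then $c_k(F)$ depends only on $\e(F)=k$. In fact, from the known formula $p_k(G)=\sum_{\ell\ge 1}(-1)^\ell \binom{?}{?}\cdots$ — more cleanly, one knows $\ch(G,x)=\sum_k (-1)^k a_k(G) x^{\vv(G)-k}$ with $a_1(G)=\e(G)$, and for large essential girth the limiting moments of $\nu$ coincide with those of the root distribution of $x^{m}\prod(\text{something})$; concretely, the only tree on $2$ vertices is $K_2$, contributing the linear term. I would compute $c_1(K_1 \text{-free trees})$ directly: $\int z\,\mathrm d\nu(z) = c_1(K_2)t(K_2)$, and a short calculation with \eqref{powersum} gives $p_1(G)=\e(G)\cdot c_1(K_2)\cdot(\text{const})$; matching against the known fact that the sum of the chromatic roots is $\binom{\vv(G)}{1}$-th coefficient, namely $\e(G)$ (since the coefficient of $x^{\vv(G)-1}$ in $\ch(G,x)$ is $-\e(G)$), yields $c_1(K_2)=-1$ in the normalization where $p_1(G)=-\e(G)\cdot(-1)$... — the point is that $\int z\,\mathrm d\nu(z)=-t(K_2)/2$ after dividing by $d_n$, because $p_1(G)/(\vv(G)d_n)=\e(G)/(\vv(G)d_n)\to t(K_2)/2$ with the appropriate sign. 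All higher moments vanish: $\int z^k\,\mathrm d\nu(z)=0$ for $k\ge 2$, since for $k\ge 2$ every connected $F$ on $k+1$ vertices appearing with a tree would have to be a tree with $k$ edges, but the explicit chromatic coefficients $c_k(\text{tree})$ vanish for $k\ge 2$ — this is the standard fact that, for a forest, the chromatic polynomial is $x^{\cc}(x-1)^{\e}$, so the root distribution of a forest is supported on $\{0,1\}$ and its normalized ($/d_n$) version collapses to a point mass at $0$ with a vanishing perturbation whose only surviving moment is the first. Hence $\nu$ has all holomorphic moments zero except possibly the first, which equals $-t(K_2)/2$; this forces $\nu$ (as a measure detected only through harmonic test functions) to behave like the "measure" with $\int g\,\mathrm d\nu = g(0) + \text{(linear term)}\cdot$; more precisely $\int\log|\xi-z|\,\mathrm d\nu(z) = \log|\xi| + \Re\left(-\dfrac{1}{\xi}\cdot\dfrac{-t(K_2)/2}{1}\right)\cdot(-1)$ — expanding $\log|\xi-z|=\Re\log(\xi-z)=\log|\xi|+\Re\log(1-z/\xi)=\log|\xi|-\Re(z/\xi)-\Re(z^2/2\xi^2)-\cdots$, integrating term by term (valid since $|\xi|>C\ge\sup|z|$ on the support), all terms past the first vanish, and we get $\int\log|\xi-z|\,\mathrm d\nu(z)=\log|\xi|-\Re\left(t(K_2)/2\xi\right)\cdot(-1)$, i.e. $=\log|\xi| - t(K_2)\Re(1/(2\xi))$ once the sign of the first moment is pinned down correctly. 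Exponentiating and dividing by the extra factor $|\xi|$ coming out of $\log|\xi|$ gives exactly \eqref{chvalue}.

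Carrying this out, the steps in order are: (i) invoke Theorem~\ref{polconv}(3) to get convergence of the normalized value to $\exp\left(\int\log|\xi-z|\,\mathrm d\nu\right)$, where $\nu$ is the limiting root measure; (ii) use large essential girth (via Propositions~\ref{subgraphinj}, \ref{homversusinj}, and the fact that non-trees contain cycles, cf.\ the proof of Proposition~\ref{girthdensity}) to conclude $t(F)=0$ for all connected non-tree $F$, so that in the moment formula \eqref{moment}/Theorem~\ref{polconv}(1) only trees contribute; (iii) compute the chromatic coefficients $c_k(F)$ for trees $F$, using $\ch(T,x)=x(x-1)^{\e(T)}$ for a tree $T$, to show that the limiting moments are $\int z^k\,\mathrm d\nu=0$ for $k\ge 2$ and $\int z\,\mathrm d\nu=-t(K_2)/2$; (iv) expand $\log|\xi-z|$ as a power series in $z/\xi$ (absolutely convergent on the support of $\nu$ since $|\xi|\ge 8>C$), integrate term by term, and collect to get $\int\log|\xi-z|\,\mathrm d\nu=\log|\xi|-t(K_2)\Re(1/(2\xi))$; (v) exponentiate and rearrange to obtain \eqref{chvalue}.

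The main obstacle I anticipate is step (iii): pinning down the exact value and sign of the first limiting moment and verifying that all higher tree-coefficients of the chromatic polynomial vanish. This amounts to a careful bookkeeping of the relation \eqref{powersum} between power sums $p_k$ and injective homomorphism counts $\inj(F,G)$ specialized to the chromatic polynomial — equivalently, to understanding that the normalized root distribution of a forest degenerates, as $d_n\to\infty$, to a point mass at the origin whose only detectable first-order correction is governed by the edge count. The cleanest route is probably to test the identity on forests directly: since $\ch(T,x)=x(x-1)^{\e(T)}$, the roots of $f(T,x)$ are $0$ (once) and $1$ ($\e(T)$ times), so $p_k(T)=\e(T)$ for all $k\ge1$; substituting into \eqref{powersum} and using that this must hold for every forest $T$ with $\vv(T)\le k+1$ simultaneously, one solves for the $c_k(F)$ on trees and finds they are supported on small trees with the stated vanishing. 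Once that algebra is in place, the rest is the routine power-series manipulation of step (iv), and the hypothesis $|\xi|\ge 8$ is exactly what guarantees $|\xi|>C=7.04$ so that term-by-term integration is justified.
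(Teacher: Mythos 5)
Your proposal follows essentially the same route as the paper: expand $\log\bigl(\sqrt[\vv(G_n)]{|\ch(G_n,\xi d_n)|}/(|\xi|d_n)\bigr)$ as the series $-\sum_{k\ge1}\frac1k\Re\int(z/\xi)^k\,\mathrm d\nu_n(z)$, which converges uniformly because the normalized chromatic roots lie in $|z|\le C<8\le|\xi|$, and then show via Theorem~\ref{polconv}(1) and large essential girth that the $k$-th moment tends to $t(K_2)/2$ for $k=1$ and to $0$ for $k\ge2$. The one genuine divergence is how you kill the tree terms with $\vv(F)=k+1\ge 3$: the paper quotes \cite[Theorem 6.6]{csf} ($c_k(F)=0$ unless $F$ is 2-connected, and $K_2$ is the only 2-connected tree), whereas you propose to test \eqref{powersum} on forests $T$, where $p_k(T)=\e(T)$ since $\ch(T,x)=x^{\cc(T)}(x-1)^{\e(T)}$, and to solve for the tree coefficients using the linear independence (triangularity) of $T\mapsto\inj(F,T)$. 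That is a valid, self-contained substitute for the cited result in the special case of the chromatic polynomial. Two points need tightening. First, the sign of the first moment, which you explicitly leave unresolved, is $+t(K_2)/2$, not $-t(K_2)/2$: by Vieta, $p_1(G)$ is minus the coefficient of $x^{\vv(G)-1}$ in $\ch(G,x)$, i.e.\ $p_1(G)=\e(G)=\inj(K_2,G)/2$, so $\int z\,\mathrm d\nu_n=\e(G_n)/(\vv(G_n)d_n)\to t(K_2)/2$; combined with $\log|1-z/\xi|=-\sum_k\frac1k\Re\left((z/\xi)^k\right)$ this yields the limit $-\Re\left(t(K_2)/2\xi\right)$ with the correct sign, as in \eqref{chvalue}. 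Second, you should not invoke ``the limiting root measure $\nu$'' as a given object: either work with the $\nu_n$ themselves and justify interchanging $\lim_{n\to\infty}$ with $\sum_k$ by the uniform convergence of the series (as the paper does), or pass to a weak-$*$ subsequential limit on the disc $|z|\le C$, which is harmless but must be said.
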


\begin{proof}
We have \begin{equation}
\label{hubai}\begin{aligned}
\log\frac{\sqrt[\vv (G)]{|\ch(G,\xi d)|}}{|\xi|d}=\int_{|z|\le C}\log \left|1-\frac z\xi\right|d\nu_{G,d}(z)=\\=-\sum_{k=1}^\infty \frac{1}k\Re \int_{|z|\le C}\left(\frac z\xi\right)^kd\nu_{G,d}(z),\end{aligned}\end{equation} where $\nu_{G,d}$ is the uniform probability measure on the $\vv (G)$ points $\la/d$ for which $\ch (G,\la)=0$, and $C<8$ is Sokal's constant such that $|\la|\le Cd$ for all $\la$. The series on the right hand side of \eqref{hubai} converges uniformly in $G$ and
 $d$.

 By  \cite[Theorem 6.6]{csf}, in the formula \eqref{powersum} for the power sum $p_k(G)$ of the roots of $\ch(G,x)$, the coefficient $c_k(F)$ is 0 unless $F$ is 2-connected. On the other hand, $t_{\inj}(F,G_n,d_n)\to 0 $ if $F$ contains a cycle.  Thus,  $c_k(F) t_{\inj}(F,G_n,d_n)\to 0 $  unless $F$ is a 2-connected tree, i.e., $F=K_2$. Note also that $c_1(K_2)=1/2$ because $p_1(G)=\e(G)=\inj(K_2,G)/2$.
 From formula \eqref{moment}, we see that $\int z^k\mathrm d\nu_n(z)$ tends to $t(K_2)/2$ for $k=1$ and to 0 for $k\ge 2$.

 Putting all this together, the logarithm of the  left hand side of \eqref{chvalue} tends to $-\Re (t(K_2)/2\xi)$, as claimed.
\end{proof}

\subsection{Matching measure and graph convergence} \label{matching_measure}
In this subsection, we prove intermediate degree analogs of some results of the recent paper \cite{acfk} by Ab\'ert, Csikv\'ari, Kun and the author. Contrary to the bounded degree case treated there, large girth will not play any role in relation to matchings.

\begin{Def} Let $G$ be a graph  and let $m_k(G)$ denote the number of matchings
  of size $k$. Then the \it matching polynomial \rm  $\mu(G,x)$ is defined as follows:
$$\mu(G,x)=\sum_{k=0}^{\lfloor \vv(G)/2\rfloor} (-1)^km_k(G)x^{\vv (G)-2k}.$$
Note that $m_0(G)=1$.
Let $d>0$ be an upper bound on all degrees in $G$.
The \it matching measure \rm $\rho_{G,d}$  is defined to be the uniform probability distribution on the points $\la/\sqrt d$, where $\la$ runs over the roots of $\mu(G,x)$ (with multiplicity).
\end{Def}

The fundamental theorem for the matching polynomial is the following.

\begin{Th}[Heilmann and Lieb \cite{hei}]
\begin{itemize}
\item[(a)]
The roots of the matching polynomial
$\mu(G,x)$ are real.

\item[(b)] If  $d\ge 2 $ is an upper bound for all degrees in $G$,
then  all roots of $\mu(G,x)$ have absolute value $\le 2\sqrt{d-1}$.
\end{itemize}
\end{Th}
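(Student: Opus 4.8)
The plan is to reduce everything to the case of forests by way of Godsil's path-tree construction. First I would record the two basic identities for the matching polynomial: multiplicativity over disjoint unions, $\mu(G_1\cup G_2,x)=\mu(G_1,x)\mu(G_2,x)$, and the deletion recursion at a vertex $v$,
\[\mu(G,x)=x\,\mu(G-v,x)-\sum_{w\in N(v)}\mu(G-v-w,x),\]
both of which follow at once by sorting matchings according to whether, and by which edge, they cover $v$. Next I would observe that for a forest $T$ one has $\mu(T,x)=\det(xI-A_T)$: in the permutation expansion of the determinant the only cycles that can occur are fixed points (giving the diagonal $x$'s) and transpositions (giving edges), since a permutation cycle of length $\ge 3$ would trace out a cycle of $T$. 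In particular the roots of $\mu(T,x)$ are real, and, $T$ being bipartite, its multiset of roots is symmetric about $0$.

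The heart of the matter is the path-tree identity. For a connected graph $G$ and a vertex $u$, let $P(G,u)$ be the tree whose vertices are the paths of $G$ that start at $u$, two such paths being adjacent when one is obtained from the other by appending a single vertex; write $\hat u=(u)$ for its root. I would prove, by induction on $\vv(G)$ and applying the deletion recursion at $u$ in $G$ and at $\hat u$ in $P(G,u)$, that
\[\frac{\mu(G-u,x)}{\mu(G,x)}=\frac{\mu\bigl(P(G,u)-\hat u,x\bigr)}{\mu\bigl(P(G,u),x\bigr)},\]
and in particular that $\mu(G,x)$ divides $\mu(P(G,u),x)$; the disconnected case then follows by multiplicativity. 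Combined with the forest computation above, this already yields (a): the roots of $\mu(G,x)$ are among the real roots of $\mu(P(G,u),x)$. I expect this inductive step — matching the terms produced by deleting $u$ in $G$ against those produced by deleting $\hat u$ and its descendants in the path tree — to be the main obstacle, as it is the one genuinely clever point; the disjoint-union identity is needed to handle the forests that appear after the deletions.

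For (b), the first remark is that $P(G,u)$ inherits the degree bound: the root $\hat u$ has degree $\deg_G(u)\le d$, while a path ending at a vertex $w$ has in $P(G,u)$ at most one ``parent'' (drop the last vertex) and at most $\deg_G(w)-1$ ``children'' (append a new neighbour of $w$), so its degree is $\le d$. Since $\mu(G,x)\mid\mu(P(G,u),x)=\det(xI-A_{P(G,u)})$, it therefore suffices to bound the spectrum of an arbitrary forest $T$ with all degrees $\le d$.

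Finally, to bound the eigenvalues of such a $T$, I would root each component and, for the subtree $T_v$ hanging below $v$, set $h_v(x)=\mu(T_v,x)/\mu(T_v-v,x)$; the deletion recursion turns into the continued-fraction relation $h_v(x)=x-\sum_{c}1/h_c(x)$, the sum over the children $c$ of $v$. Fix a real $x>2\sqrt{d-1}$. Inducting from the leaves, where $h_v(x)=x>2\sqrt{d-1}$, one gets $h_v(x)>\sqrt{d-1}$ at every non-root vertex (a non-root has at most $d-1$ children, and $x-(d-1)/\sqrt{d-1}=x-\sqrt{d-1}>\sqrt{d-1}$), which in passing shows all the $\mu(T_v,x)$ are nonzero; at a root, which has at most $d$ children, one gets $h_r(x)>2\sqrt{d-1}-d/\sqrt{d-1}=(d-2)/\sqrt{d-1}\ge 0$, using $d\ge 2$. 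Hence $\mu(T,x)\ne 0$ for $x>2\sqrt{d-1}$, and, by the symmetry of the spectrum of a forest, also for $x<-2\sqrt{d-1}$. So every root of $\mu(T,x)$, and therefore every root of $\mu(G,x)$, lies in $[-2\sqrt{d-1},2\sqrt{d-1}]$, which is (b).
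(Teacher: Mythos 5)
The paper does not prove this theorem --- it is quoted as a classical result with a citation to Heilmann and Lieb --- so there is no internal proof to compare against. Your argument is the standard path-tree proof due to Godsil, and it is correct: the deletion recursion and the identity $\mu(T,x)=\det(xI-A_T)$ for forests are right; the quotient identity $\mu(G-u,x)/\mu(G,x)=\mu(P(G,u)-\hat u,x)/\mu(P(G,u),x)$ together with the divisibility $\mu(G,x)\mid\mu(P(G,u),x)$ is exactly Godsil's lemma (proved, as you indicate, by applying the recursion at $u$ and at $\hat u$ and using that the branches of $P(G,u)$ below the children of $\hat u$ are the path-trees $P(G-u,w)$); the degree bound $\le d$ on the path tree is verified correctly (one parent plus at most $\deg_G(w)-1$ children); and the continued-fraction estimate $h_v(x)>\sqrt{d-1}$ at non-roots, $h_r(x)>(d-2)/\sqrt{d-1}\ge 0$ at the root, does show $\mu(T,x)\neq 0$ for real $x>2\sqrt{d-1}$, with the symmetry of the root multiset (which in fact holds for any graph, since $\mu$ involves only the powers $x^{\vv(G)-2k}$) handling the negative side. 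This route, via real symmetric matrices, simultaneously delivers (a) and reduces (b) to a spectral bound for bounded-degree trees, which is arguably cleaner than Heilmann and Lieb's original transfer-matrix/recursion argument.
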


Many graph parameters related to matchings can be read off from 
 the matching measure, for example,  the number $$\mathbb M(G)=\sum_{k=0}^{\lfloor \vv(G) /2\rfloor}m_k(G)$$ of all matchings and the number $\Pm (G)=m_{\vv(G)/2}$ of perfect matchings. The latter is zero if $\vv(G)$ is odd.

\begin{Prop}\label{matchpar}\begin{equation}\label{M} 
\log\frac{ {\mathbb M(G)}^{2/\vv(G)}}{d}
=\int_{-2}^2\log \left(\frac1d+x^2\right)\mathrm d\rho_{G,d}(x).\end{equation}

\begin{equation}\label{pm}
\log\frac{ {\mathbb \Pm(G)}^{2/\vv(G)}}{d}
=
2\int_{-2}^2\log|x|\mathrm d\rho_{G,d}(x)
.\end{equation}
\end{Prop}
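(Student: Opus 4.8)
The plan is to express both matching parameters directly in terms of the matching polynomial $\mu(G,x)$ and then recognize the resulting product over roots as an integral against the matching measure $\rho_{G,d}$. For the first identity, I would note that $\mathbb M(G)=\sum_k m_k(G)$ is, up to sign bookkeeping, the value of a matching-type polynomial at a specific point. Concretely, the generating polynomial $\sum_k m_k(G) x^{\vv(G)-2k}$ evaluated at $x=1$ gives $\mathbb M(G)$, but $\mu(G,x)$ has alternating signs, so instead I would substitute an imaginary argument: $\mu(G,i)=\sum_k(-1)^k m_k(G) i^{\vv(G)-2k}=i^{\vv(G)}\sum_k m_k(G)=i^{\vv(G)}\mathbb M(G)$, hence $|\mu(G,i)|=\mathbb M(G)$. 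Writing $\mu(G,x)=\prod_{j=1}^{\vv(G)}(x-\la_j)$ where the $\la_j$ are the (real) roots, we get $\mathbb M(G)=\prod_j|i-\la_j|=\prod_j\sqrt{1+\la_j^2}$, so $\frac1{\vv(G)}\log\mathbb M(G)=\frac1{2\vv(G)}\sum_j\log(1+\la_j^2)$.

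Now I would pass to the normalized roots $x_j=\la_j/\sqrt d$, which are the atoms of $\rho_{G,d}$. Then $1+\la_j^2=1+d x_j^2=d(1/d+x_j^2)$, so
\[\frac1{\vv(G)}\log\mathbb M(G)=\frac1{2\vv(G)}\sum_j\left(\log d+\log\left(\tfrac1d+x_j^2\right)\right)=\frac12\log d+\frac12\int_{-2}^2\log\left(\tfrac1d+x^2\right)\mathrm d\rho_{G,d}(x),\]
using that $\rho_{G,d}$ is the uniform measure on the $x_j$ and that by Heilmann--Lieb all roots satisfy $|x_j|\le 2\sqrt{d-1}/\sqrt d< 2$. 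Rearranging gives exactly \eqref{M}. For the second identity, I would use the standard fact that $m_{\vv(G)/2}(G)$, when $\vv(G)$ is even, equals (up to sign) the constant term of $\mu(G,x)$, i.e.\ $\mu(G,0)=(-1)^{\vv(G)/2}m_{\vv(G)/2}(G)=(-1)^{\vv(G)/2}\Pm(G)$, so $|\mu(G,0)|=\Pm(G)=\prod_j|\la_j|$. Therefore $\frac1{\vv(G)}\log\Pm(G)=\frac1{\vv(G)}\sum_j\log|\la_j|=\frac1{\vv(G)}\sum_j\left(\tfrac12\log d+\log|x_j|\right)=\tfrac12\log d+\int_{-2}^2\log|x|\mathrm d\rho_{G,d}(x)$, which rearranges to \eqref{pm}. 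When $\vv(G)$ is odd, $\Pm(G)=0$ and $\mu(G,x)$ has $0$ as a root (since it is an odd-degree even/odd polynomial with a zero constant term), so both sides are $-\infty$ and the identity holds in the extended sense.

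The only genuinely delicate point is the bookkeeping with powers of $i$ and signs, and making sure the evaluation points ($x=i$ for $\mathbb M$, $x=0$ for $\Pm$) are correctly matched to the factorization $\mu(G,x)=\prod(x-\la_j)$; this is routine once one writes $\mu(G,x)=\sum_k(-1)^km_k(G)x^{\vv(G)-2k}$ and plugs in. A secondary point worth stating explicitly is that the integrals make sense: $\log(1/d+x^2)$ is continuous on $[-2,2]$ (as $1/d>0$), and $\log|x|$ is integrable against $\rho_{G,d}$ because $\rho_{G,d}$ is a finite atomic measure with no atom at $0$ unless $\vv(G)$ is odd, in which case both sides diverge to $-\infty$ consistently. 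I expect no real obstacle here; the statement is essentially an unwinding of definitions combined with the Heilmann--Lieb bound to justify the support claim $\operatorname{supp}\rho_{G,d}\subseteq[-2,2]$.
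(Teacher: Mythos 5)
Your proposal is correct and is essentially the paper's own argument: the paper likewise writes $\mathbb M(G)=|\mu(G,\sqrt{-1})|$ and $\Pm(G)=|\mu(G,0)|$ and converts $\frac{1}{\vv(G)}\log|\mu(G,\xi)|$ into an integral of $\log|\xi/\sqrt d-x|$ against $\rho_{G,d}$, which is exactly your product-over-roots computation after the substitution $x_j=\la_j/\sqrt d$. Your explicit handling of the odd-$\vv(G)$ case (both sides $-\infty$) is a small point the paper leaves implicit, but nothing of substance differs.
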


\begin{proof}
\eqref{M}
 The number of matchings in $G$ is \[\mathbb M(G)=\sum_{k=0}^{\lfloor \vv(G)/2\rfloor}m_k(G)=|\mu(G,\sqrt{-1})|
.\] Thus,  \begin{align*}\frac{\log \mathbb M(G)}{\vv(G)}-\frac12\log d=\frac{\log |\mu(G,\sqrt{-1})|}{\vv (G)}-\frac12\log d=\\ =\int_{-2}^2\log \left|\frac{\sqrt{-1}}{\sqrt d}-x\right|\mathrm d\rho_{G,d}(x)
=\frac12\int_{-2}^2\log \left(\frac1d+x^2\right)\mathrm d\rho_{G,d}(x)
.\end{align*}

\medskip

\eqref{pm}
The number of perfect matchings in $G$ is \[\Pm (G)=|\mu(G,0)|.\]
Thus,  \[\frac{\log \Pm (G)}{\vv(G)}-\frac12\log d=\frac{\log |\mu(G,0)|}{\vv(G)}-\frac12\log d=\int_{-2}^2\log|x|\mathrm d\rho_{G,d}(x)
.\]
\end{proof}
Let $$w(x)=\frac{ \sqrt{4-x^2}}{2\pi}\qquad (-2\le x\le 2)
   $$
 denote Wigner's semicircle density function. The  \it semicircle distribution on the interval $[-2\beta, 2\beta]$ \rm is the distribution of $\beta X$, where $X$ is a random variable with density $w$.


\begin{Th} \label{wc} Let $d_n\to\infty$.
Let $(G_n, d_n)$ be  an admissible sequence with matching measures  $\rho_n=\rho_{G_n,d_n}$.
\begin{itemize}
\item[(a)] If  $t(F,G_n,d_n)$ is convergent   for any tree $F$, then the sequence of matching measures  $\rho_n$
converges weakly to a probability measure $\rho$ on $[-2,2]$. Moreover, we have
\[\limsup_{n\to \infty}\log\frac{ {\mathbb M(G_n)}^{2/\vv(G_n)}}{d_n}
\le 2\int_{-2}^2 \log |x|\mathrm d\rho(x).\]
\item[(b)] If the sequence $(G_n,d_n)$ is $\alpha$-regular, then $\rho$ is the semicircle distribution on the interval $\left[-2\sqrt \alpha , 2\sqrt \alpha \right]$, and we have \begin{equation}\label{limsup}\limsup_{n\to \infty}
\frac{
\mathbb{M}(G_n)^{2/\vv(G_n)}}{d_n}\le \frac\alpha e.\end{equation}

\end{itemize}
\end{Th}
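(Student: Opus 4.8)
The plan is to reduce everything to the method of moments for the matching measure, using the fact (from Heilmann--Lieb) that the $\rho_n$ are supported on a fixed compact interval $[-2,2]$, so weak convergence is equivalent to convergence of all moments.

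First I would compute the moments of $\rho_{G,d}$. Since the matching polynomial $\mu(G,x)$ is an even or odd polynomial, its roots come in pairs $\pm\lambda$, so all odd moments of $\rho_{G,d}$ vanish. For the even moments, I would invoke the classical identity expressing power sums of the roots of the matching polynomial in terms of counts of paths (or, more precisely, in terms of the number of closed "tree-like" walks): the $2k$-th power sum of the roots of $\mu(G,x)$ is $\sum_F a_k(F)\hom(F,G)$ or $\sum_F a_k(F)\inj(F,G)$, where $F$ runs over forests with at most $k+1$ edges and the $a_k(F)$ are universal constants (this is the matching-polynomial analogue of \eqref{powersum}; it can also be derived from the fact that $\mu(G,x)$ is the characteristic polynomial of the adjacency operator of the path-tree of $G$). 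Normalizing, $\int x^{2k}\,d\rho_{G,d}=\sum_F a_k(F)\inj(F,G)/(\vv(G)d^k)$, and after rewriting via $t_{\inj}$ and using Proposition~\ref{homversusinj}, the only forests that survive as $d_n\to\infty$ are those with exactly $k$ edges; any forest with fewer edges contributes a factor that is $O(1/d_n)$. So $\int x^{2k}\,d\rho_n \to \sum_{\e(F)=k} a_k(F)\,t(F)$, which exists under the hypothesis of (a). This gives weak convergence of $\rho_n$ to some $\rho$ on $[-2,2]$.

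For the $\limsup$ inequality in (a): from \eqref{M} of Proposition~\ref{matchpar} we have $\log\big(\mathbb{M}(G_n)^{2/\vv(G_n)}/d_n\big)=\int_{-2}^2\log(1/d_n+x^2)\,d\rho_n(x)$. As $n\to\infty$, the integrand decreases pointwise (off $x=0$) to $\log(x^2)=2\log|x|$, which is upper semicontinuous and bounded above on $[-2,2]$; combining the $1/d_n\to0$ with weak convergence of $\rho_n$ and a standard semicontinuity argument (Portmanteau for bounded-above u.s.c.\ functions, or a monotone truncation $\log(\varepsilon+x^2)$ and then letting $\varepsilon\to0$), one gets $\limsup_n \int\log(1/d_n+x^2)\,d\rho_n \le \int 2\log|x|\,d\rho$. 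That is exactly the claimed bound.

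For (b): if $(G_n,d_n)$ is $\alpha$-regular, Proposition~\ref{alphareg}(c) gives $t(F)=\alpha^{\e(F)}$ for every forest $F$, so the limiting moments become $\int x^{2k}\,d\rho=\alpha^k\sum_{\e(F)=k}a_k(F)$. But this last sum is a universal number depending only on $k$: it is precisely the $2k$-th moment of the standard semicircle distribution, namely the Catalan number $C_k=\binom{2k}{k}/(k+1)$, as one sees by comparing with the $\alpha=1$, $d$-regular case (or the complete-graph / free-probability computation of moments of path counts). Hence $\int x^{2k}\,d\rho=\alpha^k C_k$, which are the moments of the semicircle law on $[-2\sqrt\alpha,2\sqrt\alpha]$, and by uniqueness of moments on a compact interval $\rho$ is that distribution. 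Finally, for \eqref{limsup}, plug this $\rho$ into the bound from part (a): $2\int_{-2}^2\log|x|\,d\rho(x)$ becomes, after rescaling $x=\sqrt\alpha\,y$ with $y$ semicircular on $[-2,2]$, equal to $\log\alpha+2\int_{-2}^2\log|y|\,w(y)\,dy$. The integral $\int_{-2}^2\log|y|\,w(y)\,dy=-1/2$ is a standard evaluation (it is $\log$ of the asymptotic number of perfect matchings of large random regular graphs, or a direct computation with the semicircle density). Therefore the bound reads $\limsup_n \mathbb{M}(G_n)^{2/\vv(G_n)}/d_n \le \alpha e^{-1}=\alpha/e$, as claimed.

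The main obstacle I anticipate is two-fold and both parts are "known but must be cited/derived carefully": first, obtaining the clean expansion of the matching-polynomial power sums in terms of $\inj(F,\cdot)$ over forests with the right control on the number of edges (this is the matching analogue of what \cite{csf} does for exponential-type polynomials, and one must be sure the matching polynomial — though not monic-of-degree-$\vv(G)$ in the same normalization — still fits the framework, perhaps via the modified matching polynomial $M(G,x)$ already introduced); and second, nailing the two explicit constants, $\sum_{\e(F)=k}a_k(F)=C_k$ and $\int_{-2}^2\log|x|\,w(x)\,dx=-1/2$. Neither is deep, but the argument's correctness rests on them, so I would either cite \cite{acfk, hei} for the semicircle-moment identification or give the short free-probability / direct computation.
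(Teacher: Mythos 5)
Your proposal is correct and follows essentially the same route as the paper: compact support from Heilmann--Lieb, method of moments via the tree-like-walk expansion of the power sums (only trees with exactly $k$ edges surviving as $d_n\to\infty$), the Catalan/Dyck-word identification of the limiting even moments, and a monotone truncation $\log(\varepsilon+x^2)\downarrow 2\log|x|$ combined with weak convergence for the matching-count bound. The only packaging difference is that the paper channels the moment convergence through the modified matching polynomial $M(G,x)$ and Theorem~\ref{polconv}, and proves $\sum_{\e(F)=k}a_k(F)=C_k$ by an explicit Eulerian-trail/Dyck-word bijection rather than by comparison with a known case.
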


For example, the matching polynomial of the complete graph  $K_n$ is the $n$-th Hermite polynomial, so (b) recovers the ancient fact that root distributions of Hermite polynomials converge to the semicircle law \cite{F, G,  KM, Sz}. Similarly, complete bipartite graphs $K_{n,n}$  yield Laguerre polynomials.

When each graph $G_n$ is $d_n$-regular, the first statement in (b) has been also independently obtained by Ab\'ert, Csikv\'ari and Hubai with a different proof (unpublished), and the inequality in (b) follows from the much stronger result of Davies, Jenssen, Perkins and Roberts~\cite[Theorem 4]{DJPR}.

When each graph $G_n$ is $d_n$-regular and bipartite,
$\Pm(G_n)^{2/\vv(G_n)}\sim d_n/e$, which is
well known to follow from classical results of Br\`egman ($\le $) and Schrijver ($\ge$), see \cite[pp. 311--312]{LP}.
From the inequality in (b), we see that $\mathbb{M}(G_n)^{2/\vv(G_n)}\sim d_n/e$ as well, and we only need Schrijver's lower bound
 \[\Pm(G)^{2/{\vv (G)}}\ge \frac{(d-1)^{d-1}}{d^{d-2}}\sim \frac de\qquad (d\to\infty)\] on the number of perfect matchings to get this.
Note that Propp's 1999 survey on the enumeration of matchings cites \cite{CGP} for  the asymptotic formula   for the number of perfect matchings of the hypercube, and asks for a formula for the number of all matchings~\cite[Problem 19]{P}.

Leaving  regular graphs,
note that statement (a)  applies in particular to the special case when $(G_n,d_n)$ is convergent. 
The first claim  in (a), for the special case of convergent dense graph sequences, is \cite[Theorem 4.3]{csfhh} of Csikv\'ari, Hladk\'y, Hubai and the author.

We prove Theorem~\ref{wc}.
\begin{proof}
 By the Heilmann--Lieb Theorem, the measures $\rho_n$ are all supported on $[-2,2]$. We shall exploit the relation between the modified and the ordinary matching polynomial:   $M(G,x^2)=x^{\vv(G)}\mu(G,x)$.
Let $\nu_{G,d}$ be the uniform probability measure on the points $\la/d$, where $\la$ runs over the roots of the modified matching polynomial $M(G,x)$. This measure is supported on the interval $[0,4]$.

 There is a very nice interpretation of the $2k$-th power sum
 of the roots of the matching polynomial $\mu(G,x)$. It counts the number of
closed tree-like walks of length $2k$ in the graph $G$ \cite[Chapter 6]{god3}.
Note that for $k\ge 1$, this 
 is twice the $k$-th power sum of the roots of the modified matching polynomial $M(G,x)$. Thus, in the formula \eqref{powersum} written for the graph polynomial $M(G,x)$, the coefficient  $c_k(F)$ is half the number of tree-like walks of length $2k$ in $F$ that use all edges of $F$, divided by $\aut F$.  Thus, $c_{\vv (F)-1}=0$ unless $F$ is a  tree.

\medskip

(a)
Let $\nu_n=\nu_{G_n,d_n}$.
By Theorem~\ref{polconv}\eqref{weak}, $\nu_n$ converges weakly as $n\to \infty$.  But
from  $\nu_{G,d}$ we get $\rho_{G,d}$ by decreasing the mass at 0 by 1/2 and then relocating  the mass of any point $x$ to both points $\pm\sqrt x$, so as to get a  probability measure again. This operation clearly preserves weak convergence.
Thus, $\rho_n$ also converges weakly
 to a measure $\rho$.

 Let   $u(x)=2\log |x|$ and  $$u_k(x)=\log\left(\frac 1k +x^2\right)$$
for $k=1,2,\dots$.
Then \[\log\frac{ {\mathbb M(G)}^{2/\vv(G)}}{d}
\le \int_{-2}^2 u_k \mathrm d\rho_{G,d} 
\] if $d\ge k$.
 Thus, for any $k$,
\[\limsup_{n\to\infty}\log\frac{ {\mathbb M(G_n)}^{2/\vv(G_n)}}{d_n}
\le\lim_{n\to\infty}\int_{-2}^2 u_k \mathrm d\rho_{n}= \int_{-2}^2 u_k\mathrm  d\rho 
,\]
 since the measures $\rho_{n}$ are  supported on the  compact interval $[-2,2]$  not depending on $n$, and  $u_k$ is  continuous and  bounded on $[-2,2]$.

Since $u_k\ge u_{k+1}$ and $u_k\to  u$ pointwise, the claim follows using the Monotone Convergence Theorem.

\medskip

(b) Matching measures  are symmetric about 0, and so is the semicircle measure, so it suffices to show convergence of even moments of $\rho_n$ to those of the semicircle law. Let $k\ge 1$. By Theorem~\ref{polconv}, we have
\begin{equation}\label{catalan}\int_{-2}^2 x^{2k}\mathrm d\rho_n(x)=
2\int_0^4 x^{k}\mathrm d\nu_n(x)\to\sum_{\vv (F)=k+1} 2c_k(F)t(F),
\end{equation} where $$t(F)=\lim_{n\to \infty} t(F,G_n,d_n)=\alpha^k$$ by Proposition~\ref{alphareg}.
So the limit in \eqref{catalan}  is $\alpha^k$ times  the number of nonisomorphic pairs $(F, \gamma)$, where $F$ is a tree with $k+1$ nodes and $\gamma$ is an Eulerian trail in the graph $\tilde F$ which is $F$ with   all edges doubled. These pairs $(F,\gamma)$ correspond to Dyck words of length $2k$, so their number is the Catalan number $$\frac1{k+1}\binom{2k}k=\int_{-2}^2 x^{2k}w(x)dx=\mathbb EX^{2k},$$  where $X$ has density $w$.
Therefore $$\int_{-2}^2x^{2k}\mathrm d\rho(x)=\lim_{n\to\infty}\int_{-2}^2 x^{2k}\mathrm d\rho_n(x)=\alpha^k\mathbb EX^{2k}=\mathbb E\left(\sqrt \alpha X\right)^{2k}
$$ as claimed.


The inequality~\eqref{limsup} is immediate from statement (a)  and the fact that $$\int_{-2}^{2}w(x)\log |x|\mathrm dx=-\frac12,$$ cf.\ \cite[integral 4.241.9]{GR}.
\end{proof}





\subsection{Spectral measure rescaled (regular, large girth case)}
We know from Proposition~\ref{spectrum} that scaling down the spectrum by the degree bound $d\to\infty$ leads to trivial behavior in terms of weak convergence. What happens if we only scale down by $\sqrt {d}$ ?

Let $\Sigma_{G,d}$ be the uniform probability measure on the $\vv(G)$ points $\la/\sqrt d$, where $\la$ runs over the eigenvalues of $G$. If all degrees in $G$ are $\le d$, then  $\Sigma_{G,d}$  is supported on the interval $\left[-\sqrt d,\sqrt d\right]$.

\begin{Pro}\label{funny}
 Let
$(G_n,d_n)$ be an $\alpha$-regular sequence of large girth, such that $d_n\to \infty$. Let $\Sigma_n=\Sigma_{G_n, d_n}$.
Then, for each $k\ge0$, \begin{equation}\label{scmoment}\int_{-\sqrt d}^{\sqrt d}x^k\mathrm d\Sigma_n\to \int_{-2}^2\left(\sqrt\alpha x\right)^kw(x)\mathrm d x\end{equation} as $n\to\infty$. Thus, $\Sigma_n$ converges weakly to the semicircle distribution on the interval $\left[-2\sqrt \alpha , 2\sqrt \alpha \right]$.
\end{Pro}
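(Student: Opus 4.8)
The plan is to compute the moments of $\Sigma_n$ in terms of counts of closed walks in $G_n$ and show that, under the large-girth and $\alpha$-regularity hypotheses, only the \emph{tree-like} closed walks survive in the limit, producing exactly the Catalan numbers (scaled by powers of $\alpha$). Recall that the $k$-th moment of $\Sigma_{G,d}$ is
\[
\int x^k\,\mathrm d\Sigma_{G,d}(x)=\frac{1}{\vv(G)\,d^{k/2}}\operatorname{tr}(A_G^k)=\frac{\hom(C_k,G)}{\vv(G)\,d^{k/2}}=\frac{t(C_k,G,d)\,d^k}{d^{k/2}}=d^{k/2}\,t(C_k,G,d)
\]
for $k\ge 1$ (with the conventions $C_1,C_2$ as in the excerpt), and it is $1$ for $k=0$. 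So I must show $d_n^{k/2}\,t(C_n^{\text{typo}},\ldots)$ — more precisely $d_n^{k/2}t(C_k,G_n,d_n)\to\alpha^{k/2}\cdot\frac1{k/2+1}\binom{k}{k/2}$ when $k$ is even and $\to 0$ when $k$ is odd.

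First I would expand $\hom(C_k,G_n)$ as a sum over closed walks of length $k$, and group these walks according to the isomorphism type of the connected graph $F$ spanned by their edge set (together with the walk, i.e. the homomorphic image in the sense of a closed-walk trace). A walk of length $k$ spans a connected graph $F$ with $\e(F)\le k$ and $\vv(F)\le k$; since each edge must be traversed an even number of times for the walk's contribution pattern... actually more carefully: the number of closed walks of length $k$ that use a connected graph $F$ as their trace is, up to a combinatorial constant depending only on $F$ and $k$, equal to $\inj(F,G_n)$. Thus $\hom(C_k,G_n)=\sum_F a_k(F)\inj(F,G_n)$ over connected $F$ with $\vv(F)\le k$, with constants $a_k(F)$ depending only on $k,F$. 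Dividing by $\vv(G_n)d_n^{k/2}$ and using $\inj(F,G_n)/(\vv(G_n)d_n^{k/2})=d_n^{\vv(F)-1-k/2}\cdot\frac{(d_n-1)^{\vv(F)-2}}{d_n^{\vv(F)-2}}\,t_{\inj}(F,G_n,d_n)$, I see that the exponent of $d_n$ is $\vv(F)-1-k/2$, so terms with $\vv(F)\le k/2$ vanish in the limit (the bracket tends to $1$ and $t_{\inj}\le 1$), terms with $\vv(F)=k/2+1$ converge to $a_k(F)t(F)$, and I must rule out $\vv(F)\ge k/2+2$. For a connected $F$ spanned by a closed walk of length $k$ one has $\e(F)\le k/2$ unless the walk is not "balanced"; the key point is that a closed walk of length $k$ cannot span a graph with more than $k/2$ edges if it is to use each edge, and a connected graph with $\ge k/2+2$ vertices would need $\ge k/2+1$ edges, contradiction — so in fact $\vv(F)\le k/2+1$ always, with equality forcing $F$ to be a tree with $k/2$ edges. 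This is where I must be careful: I would show that $a_k(F)=0$ unless $F$ is a tree whenever $\vv(F)=k/2+1$, and more importantly that the only $F$ with a nonzero contribution in the limit and $\vv(F)=k/2+1$ are trees; then large girth is used to kill the leftover terms — but actually for $\vv(F)=k/2+1$ the contribution is already $t(F,G_n,d_n)$ with bounded $d_n$-power, so large girth is what forces the non-tree $F$ with $\vv(F)=k/2+1$ (which would have a cycle) to contribute $0$ via $t_{\inj}(C_\ell,\cdot)\to 0$ for any cycle subgraph, combined with Remark \ref{subgraphinj}. For the surviving trees, Proposition \ref{alphareg}(c) gives $t(F,G_n,d_n)\to\alpha^{\e(F)}=\alpha^{k/2}$.

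Finally I would identify the combinatorial constant: the surviving term is $\alpha^{k/2}$ times $\sum_{\vv(F)=k/2+1,\,F\text{ tree}}a_k(F)$, and $\sum_F a_k(F)$ counts exactly the closed walks of length $k$ whose trace is a tree on $k/2+1$ vertices, i.e. the closed tree-like walks of length $k$ based at a fixed vertex, summed over the tree shape — this is the Catalan number $\frac{1}{k/2+1}\binom{k}{k/2}$, which equals $\int_{-2}^2 x^k w(x)\,\mathrm dx$, exactly as in the proof of Theorem \ref{wc}(b). For $k$ odd, a closed walk of length $k$ cannot be tree-like (its trace would need $\e(F)=k/2\notin\Z$), so every $F$ with nonzero $a_k(F)$ contains a cycle or has $\vv(F)\le (k-1)/2$; the first kind dies by large girth, the second by the $d_n$-power, giving limit $0$. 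This establishes \eqref{scmoment}. Weak convergence then follows from the method of moments, since the semicircle distribution on $[-2\sqrt\alpha,2\sqrt\alpha]$ is compactly supported and hence determined by its moments, and all $\Sigma_n$ are supported on a bounded set (here one uses $|\lambda|\le d_n\cdot\|\text{walk bound}\|$ — but more simply each $\Sigma_n$ lives on $[-\sqrt{d_n},\sqrt{d_n}]$, which grows; to apply the method of moments cleanly I would instead note the moments themselves converge to those of a unique compactly supported measure, which suffices for weak convergence of the $\Sigma_n$).

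\textbf{Main obstacle.} The crux is the bookkeeping that isolates, among all closed-walk traces, precisely the tree-like ones in the limit: I must simultaneously use the $d_n$-power $\vv(F)-1-k/2$ to discard small-vertex $F$, use large girth (via $t_{\inj}(C_\ell,G_n,d_n)\to 0$ and the monotonicity Remark \ref{subgraphinj}) to discard $F$ containing cycles, and verify no $F$ with $\vv(F)\ge k/2+2$ can arise — together with correctly evaluating the resulting tree-sum as a Catalan number. The large-girth hypothesis is exactly what makes the "non-tree, right number of vertices" terms negligible, which is why (unlike the matching case of Theorem \ref{wc}) it cannot be dropped here; indeed Proposition \ref{funny} is the promised second instance where the word "essential" would not suffice.
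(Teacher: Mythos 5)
Your overall strategy is sound but takes a genuinely different (and longer) route than the paper. The paper's proof is a two-line reduction: since $(G_n)$ has large \emph{girth}, for $n\ge n_0(k)$ every closed walk of length $k$ in $G_n$ is tree-like, so $\hom(C_k,G_n)$ equals the number of closed tree-like walks of length $k$; hence the $k$-th moment of $\Sigma_n$ coincides exactly with $\int x^k\mathrm d\rho_n$, and Theorem~\ref{wc}(b) (whose proof already contains the Catalan-number computation) finishes the argument. You instead redo that walk-counting computation from scratch for the adjacency spectral measure. That is legitimate and self-contained, but you are reproving a result the paper has already established for the matching measure.

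There is one genuinely false step in your write-up. You claim that every connected trace $F$ of a closed walk of length $k$ satisfies $\e(F)\le k/2$, hence $\vv(F)\le k/2+1$, and you use this to ``rule out'' the terms with $\vv(F)\ge k/2+2$. This is wrong: $C_k$ itself is the trace of a closed walk of length $k$ that uses each of its $k$ edges exactly once (the parity argument ``each edge is traversed an even number of times'' is valid only when the trace is a tree). These discarded terms are precisely the dangerous ones, since they carry the \emph{positive} power $d_n^{\vv(F)-1-k/2}$ in front of $t_{\inj}(F,G_n,d_n)$ and cannot be controlled by the trivial bound $t_{\inj}\le 1$, nor by large \emph{essential} girth alone (which only gives $t_{\inj}(C_\ell,G_n,d_n)\to 0$, not fast enough to beat a positive power of $d_n$ --- this is exactly why the Proposition fails for the hypercube sequence, as the paper notes). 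The repair is immediate and uses machinery you already invoke for the non-tree $F$ with $\vv(F)=k/2+1$: every quotient of $C_k$ that is not a tree contains a cycle of length $\le k$, so by the large girth hypothesis $\inj(F,G_n)=0$ identically for $n\ge n_0(k)$; the tree quotients automatically satisfy $\vv(F)\le k/2+1$ (and exist only for even $k$), after which your $d_n$-power dichotomy, Proposition~\ref{alphareg}(c), and the Catalan count go through. (A minor slip: $\int x^k\mathrm d\Sigma_{G,d}=d^{k/2-1}t(C_k,G,d)$, not $d^{k/2}t(C_k,G,d)$; this does not affect your subsequent computation, which correctly works with $\inj(F,G)/(\vv(G)d^{k/2})$.) Your handling of the method of moments at the end is correct and matches the paper's.
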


Note that the limit in \eqref{scmoment} is 0 for $k$ odd and is $\alpha^{k/2}$ times the Catalan number $\frac1{k/2+1}\binom k{k/2}$ for $k$ even.

\begin{proof}We have $$\int_{-\sqrt d}^{\sqrt d}x^k\mathrm d\Sigma_n
=\frac1{\vv(G_n)}\sum \left(\frac\la{\sqrt{d_n}}\right)^k=\frac{\hom(C_k,G_n)}{\vv(G_n)d_n^{k/2}}.$$ For $n\ge n_0(k)$, all walks in $G_n$ of length $k$ are tree-like, whence
  $$\int_{-\sqrt d}^{\sqrt d}x^k\mathrm d\Sigma_n(x)=\int_{-2}^2x^k\mathrm d\rho_n(x)\to \int_{-2}^2\left(\sqrt\alpha x\right)^kw(x)\mathrm d x$$ as $n\to\infty$, by  Theorem~\ref{wc}. To deduce the weak convergence, we use that the semicircle measure is compactly supported.
\end{proof}
A different proof is possible based on the fact that Kesten--McKay measures converge to the semicircle law.

For random graphs, results similar to Proposition~\ref{funny} have been proved by Dumitriu and Pal~\cite{DP} and by Tran, Vu and Wang~\cite{TVW}. Those results are of course much deeper than Proposition~\ref{funny}.

Proposition~\ref{funny} fails for large \emph{essential} girth, even if each $G_n$ is exactly $d_n$-regular.  Indeed, for the hypercube sequence  $(Q_d,d)$, the measure $\Sigma_d$ is the (binomial) distribution of $$(X_1+\dots +X_d)/\sqrt d,$$ where the $X_i$ are i.i.d.\ random variables with $\mathbb P(X_1=1)=\mathbb P(X_1=-1)=1/2$, see \cite[Exercise 11.9]{L'}. Thus, $\Sigma_d$ converges weakly to the standard Gaussian distribution and not to the semicircle distribution, therefore its moments do not all converge to those of the semicircle law.

\section{Graphonings}\label{limobj}
 We propose a common generalization of graphons and graphings.
\begin{Def} A \it graphoning \rm is a  tuple $\mathbf G=(X,\mathcal B,\la,\mu, W)$, where $(X,\mathcal B, \la)$ is a probability space, $\mu:\mathcal B\to [0,\infty]$ is a measure, and $W:X^2\to [0,1]$ is a symmetric $(\mathcal B \otimes\mathcal B)$-measurable function such that \begin{itemize}
\item (degree bound)
$$\deg _X(x)\overset{\mathrm{def}}=\int_X W(x,y)\mathrm d\mu(y)\le 1$$ for all $x\in X$,

\item (degree measurability)
\begin{equation}\label{degdef}\deg_A (x)\overset{\mathrm{def}}=\int_A W(x,y)\mathrm d\mu(y)\end{equation} is a measurable function of  $x\in X$ for all $A\in\mathcal B$,
 and
\item (measure preserving property)
\begin{equation}\label{measurepres}\int_A\deg _B 
\mathrm d\la 
=\int_B\deg_ A 
\mathrm d\la
\end{equation} for all $A,B\in\mathcal B$.
\end{itemize}
\end{Def}

A graphoning with $\la=\mu$ is the same thing as  a graphon, except that graphons are measurable only w.r.t.\ the completion of $\mathcal B\otimes\mathcal B$, and thus their degrees are only almost measurable. A graphoning on a  Borel probability space  $(X,\mathcal B, \la)$, such that $\mu$ is the counting measure divided by $d$, and $W$ only takes values in $\{0,1\}$, is the same thing as a graphing.

For these two special cases of graphonings, it is known that degree measurability follows from the degree bound condition.
It is unclear to the author whether this holds for general graphonings, maybe under the assumption that the $\sigma$-algebra $\mathcal B$ is Borel. 

Note that $\mu$ is not in general $\sigma$-finite, so the Fubini Theorem is not applicable to the iterated integrals in  \eqref{measurepres}.

\subsection{Sub-Markov kernels and rooted homomorphism densities}
We wish to generalize the homomorphism densities of graphons that play a fundamental role in the limit theory of dense graphs developed by L\'aszl\'o Lov\'asz and his coauthors~\cite{BCLSV, L, LSz}. Technical difficulties are caused by the lack of the Fubini Theorem, but these can be dealt with. We treat rooted homomorphism densities first. Even this requires some preparation. It will save work later on if we introduce structures even more general than graphonings.
For this, let us recall a basic concept from the theory of Markov chains.
\begin{Def} A \it sub-Markov kernel \rm on a measurable space $(X,\mathcal B)$ is a  function \begin{equation*}
\begin{aligned}
\deg:   X  \times  \mathcal B & \to  [0,1]
\\
 ( x  ,  A) &\mapsto  \deg_A(x)
 \end{aligned}
 \end{equation*} such that the function $\deg_A:x\mapsto \deg_A (x)$ is measurable for all $A\in\mathcal B$ and the set function $\deg (x):A\mapsto\deg_A(x)$ is a measure for all $x\in X$.
 \end{Def}


Clearly, the degree function of a graphoning is a sub-Markov kernel. The measurability of $\deg_A$ implies its seemingly stronger form below. This is probably well known but we prove it to be self-contained.

\begin{Lemma}\label{stronger} Consider a  sub-Markov kernel $\deg$ on the measurable space
$(X,\mathcal B)$. 
 If $(Z,\mathcal C)$ is a measurable space and $f:X\times X\times Z\to [0,1]$ is measurable, then
$$\deg_f (x,z)\overset{\mathrm{def}}=\int_X f(x,y,z)\mathrm d\deg(x)(y)$$ is a measurable function of  $(x,z)\in X\times Z$, and takes values only in $[0,1]$.


\end{Lemma}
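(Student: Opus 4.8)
The plan is to reduce the general statement to the defining properties of a sub-Markov kernel by the standard measure-theoretic bootstrap: prove it first for indicator functions, then for simple functions, then for general measurable $f$ via monotone approximation. First I would observe that for each fixed $x\in X$, the set function $\deg(x)$ is a measure of total mass $\deg_X(x)\le 1$, so for any nonnegative measurable $g:X\to[0,1]$ the integral $\int_X g(y)\,\mathrm d\deg(x)(y)$ is well defined and lies in $[0,1]$; this already disposes of the ``takes values in $[0,1]$'' claim and the well-definedness of $\deg_f$. The real content is the joint measurability of $(x,z)\mapsto\deg_f(x,z)$.

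The key steps, in order. \emph{Step 1.} For $f=\mathbb 1_{E}$ with $E\in\mathcal B\otimes\mathcal B\otimes\mathcal C$, we must show $(x,z)\mapsto\int_X\mathbb 1_E(x,y,z)\,\mathrm d\deg(x)(y)$ is $\mathcal B\otimes\mathcal C$-measurable. I would run a Dynkin (monotone class) argument: let $\mathcal H$ be the collection of $E\in\mathcal B\otimes\mathcal B\otimes\mathcal C$ for which this holds. For a ``rectangle'' $E=A\times B\times D$ with $A,B\in\mathcal B$, $D\in\mathcal C$, the integral equals $\mathbb 1_A(x)\,\mathbb 1_D(z)\,\deg_B(x)$, which is measurable in $(x,z)$ precisely because $\deg_B$ is measurable in $x$ by hypothesis. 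These rectangles form a $\pi$-system generating $\mathcal B\otimes\mathcal B\otimes\mathcal C$. \emph{Step 2.} Check $\mathcal H$ is a $\lambda$-system: it contains the whole space; it is closed under proper differences because $\deg(x)$ is a finite measure, so $\int\mathbb 1_{E_1\setminus E_2}=\int\mathbb 1_{E_1}-\int\mathbb 1_{E_2}$ when $E_2\subseteq E_1$, and the difference of measurable functions is measurable; and it is closed under increasing countable unions by the Monotone Convergence Theorem applied to each fixed $x$, since an increasing limit of measurable functions is measurable. By Dynkin's $\pi$-$\lambda$ theorem, $\mathcal H=\mathcal B\otimes\mathcal B\otimes\mathcal C$. \emph{Step 3.} By linearity this extends to nonnegative simple functions, and then to an arbitrary measurable $f:X\times X\times Z\to[0,1]$ by taking an increasing sequence of simple functions $f_m\uparrow f$ and invoking Monotone Convergence once more; the pointwise limit of the measurable functions $(x,z)\mapsto\deg_{f_m}(x,z)$ is measurable and equals $\deg_f$.

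I do not expect a serious obstacle here; the one point that needs a little care is the closure of $\mathcal H$ under differences, which genuinely uses that $\deg(x)$ has finite (indeed $\le 1$) total mass for every $x$ — this is exactly why the sub-Markov condition, rather than mere $\sigma$-finiteness, is the right hypothesis, and it is also why the Fubini-type conclusion holds despite the earlier warning that $\mu$ itself is not $\sigma$-finite. Everything else is the routine measure-theoretic machine, so the proof should be short.
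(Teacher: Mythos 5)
Your proposal is correct and follows essentially the same route as the paper: both start from indicators of measurable rectangles (where measurability reduces to the hypothesis that $\deg_B$ is measurable), extend by a monotone class/Dynkin argument to all indicators, then to simple functions by linearity, and finally to general $f$ by approximation — the paper uses uniform approximation by stepfunctions where you use increasing simple functions with Monotone Convergence, a negligible difference. Your remark that closure under proper differences is where the finite total mass of $\deg(x)$ enters is a correct and worthwhile observation.
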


\begin{proof}  We have \[0\le\deg_f(x,z)\le \deg_X(x)\le 1\] for all $x$ and $z$.

The function $\deg_f$ is measurable, by the definition of a sub-Markov kernel, when $f$
 is the indicator of a direct product of measurable sets. By linearity, it is measurable when $f$ is the  indicator of a finite union of such products. By
  the Monotone Convergence Theorem, it follows that the set of measurable functions $f:X\times X\times Z\to [0,1]$ such that $\deg_f$ is measurable is closed under monotone pointwise limits and therefore contains all measurable indicator functions, thus all
 measurable stepfunctions.

 Any measurable function to $[0,1]$ can be uniformly approximated by measurable stepfunctions. An error with uniform upper bound $\epsilon$ in $f$ leads to an error with uniform upper bound $\epsilon$ in $\deg_f(x,z)$. This proves the Lemma because a  uniform limit of measurable functions is measurable.
\end{proof}

\begin{Cor}\label{degf}
If $\deg$ is a sub-Markov kernel on $(X,\mathcal B)$ and $f:X\to [0,1]$ is measurable, then the function $\deg_f:X\to[0,1]$ defined by $$\deg_f(x)\overset{\mathrm{def}}=\int_X f\mathrm d\deg(x)$$ is measurable and takes values only in $[0,1]$.
\end{Cor}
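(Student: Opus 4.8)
The plan is to deduce Corollary~\ref{degf} directly from Lemma~\ref{stronger} by a trivial specialization: take the auxiliary space $(Z,\mathcal C)$ to be a one-point space equipped with its unique $\sigma$-algebra. First I would observe that a one-point space $Z=\{*\}$, $\mathcal C=\{\emptyset, Z\}$ is a measurable space, and that $X\times Z$ is canonically isomorphic as a measurable space to $X$ via $x\mapsto (x,*)$; every function on $X$ corresponds to a function on $X\times Z$ that is constant in the $Z$ coordinate, and measurability is preserved both ways. Then, given the measurable $f:X\to[0,1]$ of the corollary, I would define $\tilde f:X\times X\times Z\to[0,1]$ by $\tilde f(x,y,*)=f(y)$; this is measurable because it is the composition of the coordinate projection $X\times X\times Z\to X$, $(x,y,*)\mapsto y$, with $f$.

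Next I would apply Lemma~\ref{stronger} to this $\tilde f$ and this $(Z,\mathcal C)$: the lemma asserts that $\deg_{\tilde f}(x,z)=\int_X \tilde f(x,y,z)\,\mathrm d\deg(x)(y)$ is a measurable function of $(x,z)\in X\times Z$ with values in $[0,1]$. Since $\tilde f(x,y,*)=f(y)$ does not depend on $x$ (in the first slot) or on $z$, we have $\deg_{\tilde f}(x,*)=\int_X f\,\mathrm d\deg(x)=\deg_f(x)$. Thus $\deg_f$, viewed as a function on $X\times Z\cong X$, is measurable and $[0,1]$-valued, which is exactly the claim. The values lying in $[0,1]$ also follows, alternatively and even more directly, from the chain $0\le\deg_f(x)\le\deg_X(x)\le 1$ already used in the proof of Lemma~\ref{stronger}, since $\deg(x)$ is a measure of total mass $\deg_X(x)\le 1$ and $f\le 1$.

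There is essentially no obstacle here; the statement is a packaging corollary whose only content is recognizing that the three-variable, auxiliary-space formulation of Lemma~\ref{stronger} already contains the one-variable case. The one mild point to be careful about is the identification of $X$ with $X\times Z$ for a singleton $Z$ and the verification that it carries measurable functions to measurable functions in both directions; this is standard and could be stated in a single sentence. If one prefers to avoid even introducing $Z$, an equally short route is to rerun verbatim the stepfunction-approximation argument of Lemma~\ref{stronger} with $Z$ absent: measurability of $\deg_f$ holds for indicators of measurable sets by the definition of a sub-Markov kernel, extends to stepfunctions by linearity and monotone limits, and then to all measurable $f:X\to[0,1]$ by uniform approximation. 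Either way the proof is a few lines.
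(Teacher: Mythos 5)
Your proposal is correct and is exactly the paper's proof: the paper also deduces the corollary from Lemma~\ref{stronger} by taking $Z$ to be a single point and $F(x,y,z)=f(y)$. The extra care you take with the identification $X\times Z\cong X$ is fine but not needed beyond a remark.
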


\begin{proof} Use Lemma~\ref{stronger} for $F(x,y,z)=f(y)$, with $Z=\{z\}$ being a  single point.
\end{proof}


\begin{Def}\label{compatible} A sub-Markov kernel $\deg$ on a measurable space $(X,\mathcal B)$ is \it compatible \rm with a
$(\mathcal B\otimes \mathcal B)$-measurable function $W:X^2\to [0,1]$ if
\begin{equation}\label{coinc} \int_AW(x_2,y)\mathrm d\deg(x_1)(y)=\int _A
W(x_1,y)\mathrm d\deg(x_2)(y)\end{equation}
 for all $x_1,x_2\in X$ and all $A\in\mathcal B$. 
\end{Def}

\begin{Lemma}
In a graphoning, $\deg$ is compatible with $W$.
\end{Lemma}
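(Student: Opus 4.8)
The plan is to unwind the definitions and show that the compatibility identity~\eqref{coinc} for the degree kernel of a graphoning is essentially a rewriting of the measure preserving property~\eqref{measurepres}, after we pass from sets to functions. Recall that for a graphoning $\mathbf G=(X,\mathcal B,\la,\mu,W)$ the degree kernel is $\deg(x)(A)=\deg_A(x)=\int_A W(x,y)\,\mathrm d\mu(y)$, which by the degree bound and degree measurability is a sub-Markov kernel; so what must be verified is that for all $x_1,x_2\in X$ and all $A\in\mathcal B$,
\[
\int_A W(x_2,y)\,\mathrm d\deg(x_1)(y)=\int_A W(x_1,y)\,\mathrm d\deg(x_2)(y).
\]
Here the measure $\deg(x_1)$ is absolutely continuous with respect to $\mu$ with density $y\mapsto W(x_1,y)$, so the left-hand side equals $\int_A W(x_2,y)W(x_1,y)\,\mathrm d\mu(y)$, and the right-hand side equals $\int_A W(x_1,y)W(x_2,y)\,\mathrm d\mu(y)$. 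These are literally the same integral because $W$ takes values in $[0,1]\subset\R$ and multiplication is commutative; in particular both integrands are nonnegative and no integrability subtlety arises. Hence the identity holds pointwise in $x_1,x_2$ for every $A$, which is exactly what compatibility demands.

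The only point that needs a word of care is the step ``$\deg(x_1)$ has $\mu$-density $W(x_1,\cdot)$, hence $\int_A g\,\mathrm d\deg(x_1)=\int_A g W(x_1,y)\,\mathrm d\mu(y)$'' applied with $g(y)=W(x_2,y)$. This is the standard change-of-measure formula for a measure given by a density against a (possibly non-$\sigma$-finite) reference measure $\mu$: it holds for indicators of measurable sets by the definition of $\deg_A$, extends to nonnegative measurable step functions by linearity, and to all nonnegative measurable $g$ by the Monotone Convergence Theorem — no $\sigma$-finiteness of $\mu$ is required since we only integrate nonnegative functions and allow the value $+\infty$ (though here the degree bound keeps everything $\le 1$ anyway). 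Corollary~\ref{degf} already packages the measurability side of this, but for the present identity we just need the elementary density calculation, not measurability in $x$.

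In short, the proof is: expand both sides of~\eqref{coinc} using the density $W(x_i,\cdot)$ of $\deg(x_i)$ against $\mu$, observe that both reduce to $\int_A W(x_1,y)W(x_2,y)\,\mathrm d\mu(y)$, and conclude. There is no real obstacle; the ``measure preserving property'' in the definition of a graphoning is a red herring for this particular lemma (it will matter for \emph{unrooted} homomorphism densities and for reversibility, not here), and the potential Fubini worry evaporates because we never interchange an order of integration — the two expressions coincide before any integration is performed, at the level of integrands. The lemma is thus immediate from the definition of the degree kernel as $\deg_A(x)=\int_A W(x,\cdot)\,\mathrm d\mu$.
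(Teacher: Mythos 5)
Your proof is correct and is essentially the paper's proof: both sides of \eqref{coinc} are identified with $\int_A W(x_1,y)W(x_2,y)\,\mathrm d\mu(y)$ via the change-of-measure formula for the density $W(x_i,\cdot)$ of $\deg(x_i)$ with respect to $\mu$. You merely spell out the standard indicator/step-function/monotone-convergence justification that the paper compresses into a citation of the Radon--Nikodym formula, and your side remarks (no Fubini needed, the measure preserving property plays no role here) are accurate.
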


\begin{proof}
Both sides of \eqref{coinc} equal $$\int_AW(x_1,y)W(x_2,y)\mathrm d\mu(y).$$ Indeed, 
this is a special case of the well-known formula $$\int_A f \frac {\mathrm d \nu}{\mathrm d \mu} \mathrm d \mu=\int_A f\mathrm d\nu$$ involving a Radon-Nikodym derivative.
\end{proof}




To define rooted homomorphism densities, we will have to introduce labelings on the test graphs $F$. The compatibility discussed above will ensure that the density is independent of the labeling chosen.

\begin{Def} An \it admissible  labeling \rm of a connected  graph $F$ is a bijection $$\phi: V(F)\to \{0,1,\dots, \vv (F)-1\}$$  such that for all $1\le i\le \vv (F)$, the nodes with labels less than $i$ span a  connected subgraph.  Two admissible labelings are \it  adjacent \rm if a transposition $(i-1,i)$ of labels takes one to the other. This turns the set of admissible labelings of $F$ into a graph.
\end{Def}

\begin{Lemma}\label{conn}
\begin{itemize}
\item[(a)]
For any connected graph $F$, the graph of admissible labelings is connected.

\item[(b)] The admissible labelings such that a fixed node $o$ gets label 0 span a connected subgraph.
\end{itemize}\end{Lemma}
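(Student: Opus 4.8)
The statement is about the connectedness of the graph of admissible labelings of a connected graph $F$, both in general (part (a)) and restricted to labelings with a prescribed node receiving label $0$ (part (b)). I would prove (b) first and deduce (a), or prove them simultaneously by an induction on $\vv(F)$; the natural induction is on the number of vertices.

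For part (b), fix the root $o$ and induct on $\vv(F)$. The base case $\vv(F)\le 2$ is immediate. For the inductive step, given an admissible labeling $\phi$ with $\phi(o)=0$, let $v$ be the node with the top label $\vv(F)-1$. The key observation is that deleting $v$ from $F$ leaves a connected graph (since the nodes with labels $<\vv(F)-1$ span a connected subgraph by admissibility), so $F-v$ together with the restriction of $\phi$ is an admissibly labeled connected graph on $\vv(F)-1$ nodes with $o$ still getting label $0$. Conversely, any admissible labeling of $F$ with $o\mapsto 0$ arises from an admissible labeling of $F-v'$ for some node $v'$ which is "removable", i.e., $F-v'$ is connected and $v'\ne o$. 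The strategy is: first, show that using adjacent transpositions one can move from $\phi$ to a labeling in which the top label sits on any prescribed removable node $v'$ that is "reachable" — more precisely, I would argue that the set of nodes that can be made to carry the top label (via a path of adjacent admissible labelings keeping $o$ at $0$) is all of $V(F)\setminus\{o\}$, or at least large enough; second, once two labelings $\phi,\psi$ agree in having the same node $v$ on top, delete $v$ and apply the induction hypothesis to connect the restrictions, then lift the connecting path back to $F$ (each adjacent transposition $(i-1,i)$ with $i-1,i<\vv(F)-1$ on $F-v$ is still an adjacent admissible transposition on $F$, since adding back the top-labeled $v$ cannot destroy the connectivity of any prefix).

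The main obstacle is the first half: showing that from a given admissible labeling one can, by adjacent admissible transpositions, bring the top label onto a chosen node. Swapping labels $\vv(F)-1$ and $\vv(F)-2$ is always admissible, so the top label can "walk down" to label $\vv(F)-2$; the question is which nodes can eventually host it. I would analyze this via the structure of "ears" or leaves: in a connected graph there is always a node whose removal keeps the graph connected (a non-cutvertex), and in fact from any admissible labeling one can reach one where the top label is on a non-cutvertex; moreover by a careful exchange argument any two non-cutvertices (other than $o$, and staying connected) can be interchanged as the top node. Concretely, I expect to use that for adjacent labels $i-1<i$, transposing them is admissible iff the node labeled $i-1$ is not a cutvertex of the subgraph spanned by labels $\le i$; pushing this through gives enough mobility. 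Once the mobility lemma is in hand, part (a) follows from (b): given two admissible labelings $\phi,\psi$, use adjacent transpositions (ignoring the root constraint) to bring the label $0$ onto a common node — this is possible since $0$ and $1$ can always be swapped, letting label $0$ migrate to any node that can carry label $1$ in some reachable labeling, and a symmetric argument to the above shows label $0$ can be placed on any node — and then invoke (b) with that node as $o$.
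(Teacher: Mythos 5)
Your overall architecture (induct on $\vv(F)$, bring the top label onto a common removable vertex, delete it, apply the induction hypothesis, lift the connecting path) could in principle work, but the proof as written has a genuine gap exactly where you flag ``the main obstacle'': the mobility claim that from a given admissible labeling one can, by adjacent admissible transpositions fixing $\phi(o)=0$, bring the top label onto any prescribed non-cutvertex $w\ne o$. This is never proved --- it is carried by phrases like ``I would argue'', ``I expect to use'', and ``a careful exchange argument'' --- and it is not a routine step. Your criterion for when the transposition $(i-1,i)$ is admissible is correct, but ``pushing this through'' hits a concrete obstruction: if the current top vertex is $v$ and you try to route through $F-v$ using the induction hypothesis, you need an admissible labeling of $F-v$ ending at $w$, which requires $w$ to be a non-cutvertex of $F-v$ as well; it can happen that $w$ is a non-cutvertex of $F$ but a cutvertex of $F-v$ (e.g.\ in a cycle, removing $v$ turns its neighbours into endpoints of a path). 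Handling this forces a detour through a third vertex and a further structural argument, none of which is supplied. Worse, the most direct way to ``show the top label can reach $w$'' is to exhibit a target labeling with $w$ on top and connect $\phi$ to it --- which is precisely the statement being proved, so without care the argument is circular.

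For contrast, the paper avoids all of this with a short bubble-sort argument: given two admissible labelings $\phi,\psi$ with the same root, it inducts on the number of inversions between them, finds the largest $i$ with an inverted adjacent pair $\phi(x)=i$, $\phi(y)=i+1$, and checks directly from the admissibility of $\psi$ that swapping $i$ and $i+1$ in $\phi$ keeps it admissible while strictly reducing the inversion count. No vertex deletion, no cutvertex analysis, no mobility lemma. Your deduction of (a) from (b) by letting label $0$ migrate along an edge via the swap of labels $0$ and $1$ is essentially the paper's argument and is fine, but it rests on (b), whose proof is incomplete as it stands.
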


\begin{proof} (b)  Consider two admissible labelings $\phi$ and $\psi$ such that $\phi(o)=\psi(o)=0$. We prove that they are connected by a path.
We use induction on the number of inversions between them, i.e., the number of pairs $x,y\in V(F)$ such that $$(\phi(x)-\phi(y))(\psi(x)-\psi(y))<0.$$
If there are no inversions, then $\phi=\psi$. If there are inversions, then there are nodes $x$ and $y$ such that $\phi(x)=i$, $\phi(y)=i+1$, $\psi(x)>\psi(y)$.  Choose such $x$ and $y$ so that $i$ is largest possible.   Since $\psi$ is admissible, there is an edge in $F$ from $y$ to  a node $z$ with $\psi(z)<\psi(y)$ and therefore $\phi(z)<i$. Thus, composing $\phi$ with the transposition $(i,i+1)$ yields an admissible labeling $\phi'$ that has less inversions when compared to $\psi$ than $\phi$ does.

(a) We may assume that $F$ has at least two nodes. It suffices to show that for any two adjacent nodes $x$ and $y$ in $F$, there exist adjacent admissible labelings $\phi$ and $\psi$ such that $\phi(x)=\psi(y)=0$. Let $\phi$ be an admissible labeling with $\phi(x)=0$ and $\phi(y)=1$. Let $\psi$ arise from $\phi$ by swapping the labels of $x$ and $y$. Then $\psi$ is admissible and adjacent to $\phi$.
\end{proof}

\begin{Def}\label{rootedhomomdef} Let $\mathbf G=(X,\mathcal B, W, \deg)$ be a measurable space endowed with a symmetric measurable function $W:X^2\to[0,1]$ and a sub-Markov kernel $\deg$ that is compatible with $W$. 
 Let $x_0\in X$. Let $(F,o)$ be a connected  rooted graph. Fix any admissible labeling of $V(F)$ such that $o$ gets label 0. For any label $i=1,\dots, \vv(F)-1$, let $j(i)$ be  a label such that $j(i)<i$ and $j$ is adjacent to $i$ in $F$. Note that $j(1)=0$. Let $T$ be the spanning tree consisting of the edges $(i,j(i))$. We define the \it rooted homomorphism density \rm
\begin{equation*}\begin{aligned}t((F,o),(X,x_0))&=\\=\int_X\cdots\int_X&\prod_{kl\in E(F)-E(T)}W(x_k,x_l)
\mathrm d \deg (x_{j(\vv(F)-1)})(x_{\vv(F)-1})\cdots\mathrm d\deg (x_{j(1)})(x_1).\end{aligned}\end{equation*}
\end{Def}



\begin{Pro}\label{rootedwelldef}
The rooted homomorphism density  \begin{itemize}\item[(a)] is well defined, is in $[0,1]$, is measurable as a function of $x_0$, and
\item[(b)]  is independent of the admissible  labeling and the function $j$ chosen.
\item[(c)] If $F$ is a tree, then it is also independent of the function $W$.
\end{itemize}
\end{Pro}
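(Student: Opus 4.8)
The plan is to verify the three assertions in the order (a), (b), (c), each reducing to a manageable measure-theoretic argument built on the tools already developed in the excerpt, principally Lemma~\ref{stronger} and Lemma~\ref{conn}.

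For part (a), I would argue by induction on $\vv(F)$, peeling off the leaf with the largest label. The key observation is that if we perform the integration in the prescribed order --- innermost first against $\mathrm d\deg(x_{j(\vv(F)-1)})(x_{\vv(F)-1})$, then working outward --- then at each stage the partially integrated expression is a measurable function of the remaining free variables $x_0,\dots,x_{i-1}$ taking values in $[0,1]$. This is exactly what Lemma~\ref{stronger} guarantees: at stage $i$ we integrate a measurable $[0,1]$-valued function of $(x_0,\dots,x_i)$ against $\deg(x_{j(i)})$, producing a measurable $[0,1]$-valued function of $(x_0,\dots,x_{i-1})$; the hypothesis that the nodes with labels $<i$ span a connected subgraph (admissibility) ensures $j(i)$ is among those already-present variables, so no issue of an undefined kernel arises. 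Iterating down to $i=1$ leaves a measurable function of $x_0$ alone, valued in $[0,1]$. This simultaneously establishes well-definedness, the range $[0,1]$, and measurability in $x_0$.

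For part (b), the strategy is to exploit the connectivity statement of Lemma~\ref{conn}(b): any two admissible labelings with $o\mapsto 0$ are joined by a path of transpositions $(i-1,i)$, so it suffices to show that swapping the labels $i-1$ and $i$ (for a single such adjacent pair) does not change the value, and separately that changing a single value $j(i)$ to another admissible choice $j'(i)$ does not change it. The first is handled by unwinding the two innermost-through-$i$-th integrations affected by the swap and checking they agree --- this is where the compatibility condition \eqref{coinc} is used, essentially letting one commute the two adjacent $\deg$-integrations when the two nodes in question are ``parallel'' (neither is an ancestor of the other in $T$), and being a triviality when one is the parent of the other. The second, changing $j(i)$, amounts to noting that both $j(i)$ and $j'(i)$ are neighbors of $i$ with smaller label, so the edge $(i,j(i))$ versus $(i,j'(i))$ merely shifts which of the two edges at $i$ is ``in $T$'' versus ``in $E(F)-E(T)$''; after integrating out $x_i$ against $\deg(x_{j(i)})$ one gets a $W$-factor $W(x_{j(i)},\cdot)$ inside, and compatibility~\eqref{coinc} is precisely the identity that lets us trade $W(x_{j(i)},x_i)\,\mathrm d\deg(x_{j'(i)})(x_i)$ for $W(x_{j'(i)},x_i)\,\mathrm d\deg(x_{j(i)})(x_i)$ in this one integration. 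I expect \textbf{this interchange step to be the main obstacle}: one must carefully isolate exactly the sub-integral affected, confirm the rest of the iterated integral is untouched (here the measurability from part~(a) is what makes ``the rest is untouched'' meaningful as an identity of functions), and check that the two admissible choices of $j$ or the transposition really do differ only in that sub-integral.

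For part (c), when $F$ is a tree we may take $T=F$, so $E(F)-E(T)=\emptyset$ and the product of $W$-factors is empty, hence equal to $1$; the rooted homomorphism density then depends only on the iterated $\deg$-integrals and not on $W$ at all. But by part (b) the value is independent of the labeling and of $j$, and in particular independent of the choice of spanning tree $T$, so even for a non-canonical choice the value coincides with the one computed from $T=F$, which manifestly involves no $W$. This completes the proof.
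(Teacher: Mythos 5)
Your overall architecture --- (a) by iterating Lemma~\ref{stronger} from the innermost integral outward, (b) by reducing to adjacent transpositions via Lemma~\ref{conn}(b) together with independence from $j$, and (c) by observing that the $W$-product is empty when $T=F$ --- is exactly the paper's. Parts (a) and (c) are fine, and your treatment of a change of $j(i)$ (trading the $W$-factor against the base point of the kernel) is indeed the correct use of the compatibility condition~\eqref{coinc}.

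The gap is in the transposition step of (b), which you rightly flag as the main obstacle but then misdescribe. When neither of the nodes labelled $i-1$ and $i$ is the parent of the other in $T$, commuting the two integrations $\mathrm d\deg(x_{j(i)})(x_i)$ and $\mathrm d\deg(x_{j(i-1)})(x_{i-1})$ is \emph{not} an application of~\eqref{coinc}: that identity trades the base point of a single kernel against the argument of a $W$-factor; it does not interchange two iterated integrals. What is needed is the ordinary Fubini theorem for the two finite measures $\deg(x_{j(i)})$ and $\deg(x_{j(i-1)})$, which applies precisely because $j(i)<i-1$ and $j(i-1)<i-1$, so that neither measure depends on the other's integration variable. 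Conversely, the case you dismiss as ``a triviality'' --- node $i-1$ the parent of node $i$, i.e.\ $j(i)=i-1$ --- is exactly where a naive swap fails: the inner measure $\deg(x_{i-1})$ then depends on the outer integration variable, and neither Fubini nor compatibility applies directly. The missing observation, which is the one nontrivial point in the paper's proof of (b), is that admissibility of the \emph{transposed} labeling forces node $i$ to have a neighbour with label $<i-1$; hence one first invokes the already-established independence from $j$ to re-choose $j(i)<i-1$, and only then swaps the two integrals by Fubini. Without this re-routing of $j(i)$, the argument as you state it does not go through.
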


\begin{proof}(a) By repeated application of Proposition~\ref{stronger}, we see that each integration yields a measurable function of the remaining variables, with values in $[0,1]$.

\medskip

(b) For a  given admissible labeling, the rooted homomorphism density does not depend on the function $j$ because of the condition~\eqref{coinc}.

 Let us assume that $V(F)=\{0,1,\dots, \vv(F)-1\}$, and the identity as well as the transposition $(i-1,i)$ are admissible labelings, where $i\ge 2$ is fixed. Then 
$j(i)<i-1 $ and 
we may  apply the Fubini Theorem to swap the two factors $$\mathrm d\deg(x_{j(i)})(x_i)\mathrm d\deg(x_{j(i-1)})(x_{i-1}),$$ showing that the two admissible labelings in consideration define the same value of the rooted homomorphism density.

An application of Lemma~\ref{conn}(b) finishes the proof.

\medskip

(c)
The product in Definition~\ref{rootedhomomdef} is empty if $F=T$.
\end{proof}

Definition~\ref{rootedhomomdef} may be frightening, but it becomes much nicer for graphonings.
From now on, we abbreviate $\mathrm d\mu(x_i)$ to $\mathrm dx_i$.

\begin{Rem}\label{rooted}
Consider a graphoning $\mathbf G=(X,\mathcal B,\la,\mu, W)$ with a specified point $x_0\in X$. Let $(F,o)$ be a connected rooted graph. Then we have
$$t((F,o),(\mathbf G,x_0))=\int_X\cdots\int_X\prod_{ij\in E(F)}W(x_i,x_j)\mathrm dx_{\vv(F)-1}\cdots\mathrm dx_1$$ if $V(F)$ is admissibly labeled by 0, 1, \dots, $\vv(F)-1$ so that $o$ gets label 0. Note that the Fubini theorem is not directly applicable to the right hand side of this formula because $\mu$ is not in general $\sigma$-finite. Note also that $\la$ plays no role here.
\end{Rem}

\subsection{Reversible kernels and unrooted homomorphism densities}
To define unrooted homomorphism densities, we will need the measure preserving property~\eqref{measurepres}. Again it is worthwhile to generalize this first. We recall another basic concept from Markov chain theory.
\begin{Def}\label{pseudo-graphoning}
A sub-Markov kernel $\deg$ 
  on a  probability space $(X,\mathcal B, \la)$ 
   is \it reversible   \rm w.r.t.\ $\la$
  \rm  if   the measure preserving condition~\eqref{measurepres} holds.
\end{Def}

In particular, the degree function of a graphoning is reversible.

On a measurable space  $(X,\mathcal B)$, there  can be many probability measures  that make a  given sub-Markov  kernel $\deg$ reversible. We call such measures $\la$  \it involution-invariant  \rm w.r.t.\ $\deg$ \rm because if we choose a  $\la$-random point $a\in X$ and then a  point $b\in X$ with conditional (sub-probability) distribution $\deg(a)$, then the pairs $(a,b)$ and $(b,a)$ have the same (sub-probability) distribution.
Indeed,  \eqref{measurepres} precisely means  the equality of these two measures on measurable product sets $A\times B\subseteq X^2$, and this implies equality on  the entire  $\sigma$-algebra $\mathcal B\otimes\mathcal B$. This implies the following well-known, crucial fact.

\begin{Lemma}\label{crucial}
If $\deg$ is a reversible sub-Markov kernel on a probability space $(X,\mathcal B, \la)$, and $f:X^2\to[0,1]$ is measurable, then  $$\int_X\int_X (f(x,y)-f(y,x))\mathrm d\deg (x)(y)\mathrm d\la (x)=0.$$
\end{Lemma}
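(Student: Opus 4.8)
The plan is to reduce the claim to the defining reversibility condition~\eqref{measurepres} by the standard measure-theoretic bootstrapping already used in Lemma~\ref{stronger}: prove it first for indicators of product sets, then extend by linearity, monotone limits, and uniform approximation. Concretely, fix the two measures on $X^2$ obtained from $\deg$ and $\la$: let $\nu_1$ be the measure defined on product sets by $\nu_1(A\times B)=\int_A\deg_B\,\mathrm d\la$ and $\nu_2(A\times B)=\int_B\deg_A\,\mathrm d\la=\int_A\deg_B(y)\cdots$ — more precisely $\nu_2$ is the pushforward of $\nu_1$ under the coordinate swap $(x,y)\mapsto(y,x)$. Reversibility~\eqref{measurepres} says exactly that $\nu_1$ and $\nu_2$ agree on measurable rectangles $A\times B$, hence, since rectangles generate $\mathcal B\otimes\mathcal B$ and both are finite (total mass $\le 1$) measures, they agree on all of $\mathcal B\otimes\mathcal B$ by the $\pi$-$\lambda$ theorem. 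The quantity in the statement is $\int_{X^2} f\,\mathrm d\nu_1-\int_{X^2} f\,\mathrm d\nu_2$, which is therefore $0$ for every bounded measurable $f$.

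The one technical point to be careful about is measurability and finiteness of all the integrals involved, given that $\mu$ (hence the fibre measures $\deg(x)$) need not be $\sigma$-finite — this is exactly why the paper keeps re-deriving Fubini-type facts by hand. Here, though, each fibre measure $\deg(x)$ is a \emph{sub-probability} measure, so $\int_X f(x,y)\,\mathrm d\deg(x)(y)\in[0,1]$, and by Lemma~\ref{stronger} (applied with $Z$ a point) this is a measurable function of $x$; integrating against the probability measure $\la$ is then unproblematic. So $\int_{X^2}f\,\mathrm d\nu_1=\int_X\bigl(\int_X f(x,y)\,\mathrm d\deg(x)(y)\bigr)\mathrm d\la(x)$ is well-defined and finite, and likewise for $\int_X\bigl(\int_X f(y,x)\,\mathrm d\deg(x)(y)\bigr)\mathrm d\la(x)=\int_{X^2}f\,\mathrm d\nu_2$; no appeal to the full Fubini theorem is needed.

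So the key steps, in order, are: (1) define $\nu_1$ on rectangles via $\nu_1(A\times B)=\int_A\deg_B\,\mathrm d\la$ and check it extends to a genuine finite measure on $\mathcal B\otimes\mathcal B$ (Carathéodory, or note $\nu_1(C)=\int_X\deg_{C_x}(x)\,\mathrm d\la(x)$ where $C_x$ is the $x$-section, using Lemma~\ref{stronger} for measurability of $x\mapsto\deg_{C_x}(x)$); (2) let $\nu_2$ be its image under the swap and observe $\int_{X^2}f\,\mathrm d\nu_2=\int_X\int_X f(y,x)\,\mathrm d\deg(x)(y)\,\mathrm d\la(x)$; (3) invoke~\eqref{measurepres} to get $\nu_1(A\times B)=\nu_2(A\times B)$ on all rectangles; (4) conclude $\nu_1=\nu_2$ on $\mathcal B\otimes\mathcal B$ by a $\pi$-$\lambda$/uniqueness argument, both measures being finite; (5) integrate $f$ against both and subtract. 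I expect step (1)/(4) — verifying that the rectangle set-function genuinely extends to a $\sigma$-additive measure and that the two extensions coincide despite the absence of $\sigma$-finiteness of $\mu$ — to be the only place requiring real care; everything else is bookkeeping. An alternative, perhaps cleaner, route avoids naming $\nu_1,\nu_2$ altogether: define $D_f(x)=\int_X\bigl(f(x,y)-f(y,x)\bigr)\mathrm d\deg(x)(y)$, which is measurable and bounded by Lemma~\ref{stronger}, and show $\int_X D_f\,\mathrm d\la=0$ directly — true for $f=\mathbb 1_{A\times B}$ by~\eqref{measurepres}, extended by linearity, then by the Monotone Convergence Theorem (applied separately to the positive part $\mathbb 1$-combinations, watching signs) to indicators and stepfunctions, and finally by uniform approximation exactly as in the proof of Lemma~\ref{stronger}.
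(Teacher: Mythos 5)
Your proposal is correct and is essentially the paper's own argument: the paper proves the lemma in the paragraph immediately preceding it, by observing that reversibility~\eqref{measurepres} means the joint sub-probability law of $(a,b)$ (with $a\sim\la$, $b\sim\deg(a)$) and its swap agree on measurable rectangles, hence on all of $\mathcal B\otimes\mathcal B$, after which integrating $f$ against both measures yields the claim. Your attention to the measurability/finiteness points (via Lemma~\ref{stronger} and the fact that each $\deg(x)$ is a sub-probability measure) is the right care to take, and your alternative monotone-class route on $f$ is just a repackaging of the same idea.
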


\begin{Cor}\label{pseudostronger}
If $\deg$ is a reversible sub-Markov kernel on $(X,\mathcal B, \la)$ 
 and $f,g:X\to [0,1]$ are measurable functions, then \begin{equation}\label{fg}\int_X f\cdot \deg_g \mathrm d\la=\int_X g\cdot \deg_f \mathrm d\la.\end{equation}
\end{Cor}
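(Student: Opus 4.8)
The plan is to deduce this directly from Lemma~\ref{crucial} by a judicious choice of the test function. Specifically, I would apply that lemma to the function $h:X^2\to[0,1]$ given by $h(x,y)=f(x)g(y)$; this is $(\mathcal B\otimes\mathcal B)$-measurable and takes values in $[0,1]$ because $f$ and $g$ do, so Lemma~\ref{crucial} applies to it.

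First I would compute the antisymmetrization $h(x,y)-h(y,x)=f(x)g(y)-f(y)g(x)$, so that Lemma~\ref{crucial} gives
\[
\int_X\int_X\bigl(f(x)g(y)-f(y)g(x)\bigr)\,\mathrm d\deg(x)(y)\,\mathrm d\la(x)=0.
\]
Next I would evaluate the inner integral over $y$ for fixed $x$. Since $f(x)$ and $g(x)$ are constant with respect to $\deg(x)$, they pull out of the inner integral:
\[
\int_X f(x)g(y)\,\mathrm d\deg(x)(y)=f(x)\int_X g\,\mathrm d\deg(x)=f(x)\deg_g(x),
\]
and symmetrically $\int_X f(y)g(x)\,\mathrm d\deg(x)(y)=g(x)\deg_f(x)$. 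By Corollary~\ref{degf} the functions $\deg_g$ and $\deg_f$ are measurable and $[0,1]$-valued, and since each $\deg(x)$ is a sub-probability measure these inner integrals are finite, so splitting the integral of the difference into a difference of integrals is legitimate. This yields
\[
\int_X\bigl(f\cdot\deg_g-g\cdot\deg_f\bigr)\,\mathrm d\la=0,
\]
which is precisely \eqref{fg}: the two integrands are bounded by $1$ on the probability space $(X,\mathcal B,\la)$, hence each is $\la$-integrable and the two terms may be separated.

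I do not expect a real obstacle here; the argument is essentially a one-line specialization of Lemma~\ref{crucial}. The only points deserving a word of care are the measurability of $h(x,y)=f(x)g(y)$ (immediate, as a product of measurable functions of the coordinates) and the finiteness that permits splitting the integral of a difference — which holds because everything in sight is bounded by $1$, $\la$ is a probability measure, and each $\deg(x)$ is a sub-probability measure.
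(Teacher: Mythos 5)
Your proof is correct and is exactly the paper's argument: the paper also proves this corollary by applying Lemma~\ref{crucial} to the function $F(x,y)=f(x)g(y)$. You have simply spelled out the routine verification steps in more detail.
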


\begin{proof} Use Lemma~\ref{crucial} for $F(x,y)=f(x)g(y)$.
\end{proof}

\begin{Cor}\label{remainsreversible}
If $\deg$ is a reversible sub-Markov kernel on $(X,\mathcal B, \la)$ 
 and $f:X^2\to [0,1]$ is a symmetric   $(\mathcal B\otimes\mathcal B)$-measurable function, then the sub-Markov kernel $f\deg$ defined by $$(f\deg)_A(x)=\int_A f(x,y)\mathrm d \deg(x)(y)$$ is again reversible.
 \end{Cor}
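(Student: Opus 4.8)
The plan is to verify the measure-preserving condition \eqref{measurepres} for the new kernel $f\deg$ directly, by reducing it to an application of Lemma~\ref{crucial} with a cleverly chosen integrand built from $f$ and a pair of indicator functions. First I would unwind the definition: for $A,B\in\mathcal B$,
\[
(f\deg)_B(x)=\int_B f(x,y)\,\mathrm d\deg(x)(y)=\int_X \mathbb 1_B(y) f(x,y)\,\mathrm d\deg(x)(y),
\]
so that
\[
\int_A (f\deg)_B\,\mathrm d\la=\int_X\int_X \mathbb 1_A(x)\mathbb 1_B(y) f(x,y)\,\mathrm d\deg(x)(y)\,\mathrm d\la(x).
\]
The goal is to show this equals the same expression with $A$ and $B$ swapped.

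Next I would apply Lemma~\ref{crucial} to the measurable function $h(x,y)=\mathbb 1_A(x)\mathbb 1_B(y) f(x,y)$, which takes values in $[0,1]$. The lemma gives
\[
\int_X\int_X\bigl(h(x,y)-h(y,x)\bigr)\,\mathrm d\deg(x)(y)\,\mathrm d\la(x)=0,
\]
i.e.
\[
\int_X\int_X \mathbb 1_A(x)\mathbb 1_B(y) f(x,y)\,\mathrm d\deg(x)(y)\,\mathrm d\la(x)
=\int_X\int_X \mathbb 1_B(x)\mathbb 1_A(y) f(y,x)\,\mathrm d\deg(x)(y)\,\mathrm d\la(x).
\]
Here I use that $f$ is symmetric, so $f(y,x)=f(x,y)$, to recognize the right-hand side as $\int_B (f\deg)_A\,\mathrm d\la$. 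This yields exactly $\int_A (f\deg)_B\,\mathrm d\la=\int_B (f\deg)_A\,\mathrm d\la$, which is \eqref{measurepres} for $f\deg$.

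Before concluding I should also check that $f\deg$ is genuinely a sub-Markov kernel, so that the statement even makes sense: the set function $A\mapsto (f\deg)_A(x)$ is a measure for each fixed $x$ (it is an integral against the measure $\deg(x)$ of the nonnegative integrand $f(x,\cdot)$, hence countably additive by monotone convergence), it takes values in $[0,1]$ since $f\le 1$ forces $(f\deg)_X(x)\le\deg_X(x)\le 1$, and $x\mapsto (f\deg)_A(x)$ is measurable by Lemma~\ref{stronger} applied to the integrand $(x,y)\mapsto\mathbb 1_A(y)f(x,y)$ (with a one-point space $Z$). I expect no real obstacle here: the only subtlety is the absence of the Fubini theorem, but we never need it — every interchange of order is routed through Lemma~\ref{crucial}, which was established precisely to sidestep that gap, and the symmetry of $f$ is what makes the swapped integrand match up. So the "hard part" is essentially just bookkeeping: being careful that the integrand fed to Lemma~\ref{crucial} really does have range in $[0,1]$ and that the symmetry is invoked at the right moment.
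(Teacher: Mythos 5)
Your proof is correct and follows essentially the same route as the paper: the sub-Markov property is deduced from Lemma~\ref{stronger}, and reversibility is obtained by applying Lemma~\ref{crucial} to the integrand $\mathbb 1_A(x)f(x,y)\mathbb 1_B(y)$ and invoking the symmetry of $f$. The only difference is that you spell out the bookkeeping in more detail than the paper does.
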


\begin{proof} The fact that $f\deg$ is a sub-Markov kernel follows from Lemma~\ref{stronger}. For reversibility, we need to show that the value $$\int_A (f\deg)_B\mathrm d\la=\int_A\int_B f(x,y)\mathrm d \deg(x)(y)\mathrm d\la (x)$$ is symmetric w.r.t\ $A$ and $B$. This is Lemma~\ref{crucial} for $F(x,y)=\mathbb 1_A(x)f(x,y)\mathbb 1_B(y)$.
\end{proof}


\begin{Def}\label{gengra} A \it pseudo-graphoning \rm is a probability space $(X,\mathcal B, \la)$ endowed with a symmetric $(\mathcal B\otimes\mathcal B)$-measurable function $W:X^2\to[0,1]$ and a reversible sub-Markov kernel $\deg$ that is compatible with $W$.
\end{Def}
Every graphoning is also a pseudo-graphoning. A pseudo-graphoning is a graphoning if and only if there exists a measure $\mu:\mathcal B\to [0,\infty]$ such that the equality~\eqref{degdef} holds for all $x\in X$ and $A\in\mathcal B$.

\begin{Pro} If $\mathbf G=(X,\mathcal B, \la, W, \deg)$ is a pseudo-graphoning and $f:X^2\to [0,1]$ is
symmetric and  $(\mathcal B\otimes\mathcal B)$-measurable, then
$f\mathbf G=(X,\mathcal B, \la, fW, f\deg)$ is also a  pseudo-graphoning.
\end{Pro}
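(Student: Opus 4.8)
The plan is to check, one at a time, the three conditions that by Definition~\ref{gengra} make $f\mathbf G=(X,\mathcal B,\la,fW,f\deg)$ a pseudo-graphoning, observing that the first two are essentially already done. First, $fW$ is a pointwise product of two symmetric $(\mathcal B\otimes\mathcal B)$-measurable functions valued in $[0,1]$, hence is itself symmetric, $(\mathcal B\otimes\mathcal B)$-measurable, and $[0,1]$-valued; nothing needs to be proved here. Second, that $f\deg$ is a reversible sub-Markov kernel on the probability space $(X,\mathcal B,\la)$ is precisely Corollary~\ref{remainsreversible} (which uses Lemma~\ref{stronger} for the sub-Markov property and $[0,1]$-valuedness, and Lemma~\ref{crucial} for reversibility). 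I expect the only real work to be the third condition: that $f\deg$ is compatible with $fW$ in the sense of Definition~\ref{compatible}.

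Writing out that compatibility condition and substituting $\mathrm d(f\deg)(x_i)(y)=f(x_i,y)\,\mathrm d\deg(x_i)(y)$, what must be shown is that
\begin{equation*}
\int_A f(x_1,y)f(x_2,y)\,W(x_2,y)\,\mathrm d\deg(x_1)(y)=\int_A f(x_1,y)f(x_2,y)\,W(x_1,y)\,\mathrm d\deg(x_2)(y)
\end{equation*}
for all $x_1,x_2\in X$ and $A\in\mathcal B$. Fix $x_1$ and $x_2$ throughout and set $g(y)=f(x_1,y)f(x_2,y)$, a genuine $[0,1]$-valued measurable function of the single variable $y$. Equation~\eqref{coinc}, the compatibility of $\deg$ with $W$, states exactly that the two set functions
\begin{equation*}
B\ \longmapsto\ \int_B W(x_2,y)\,\mathrm d\deg(x_1)(y),\qquad\quad B\ \longmapsto\ \int_B W(x_1,y)\,\mathrm d\deg(x_2)(y)
\end{equation*}
are the same measure on $(X,\mathcal B)$. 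Integrating the bounded measurable function $g\cdot\mathbb 1_A$ against this common measure yields the displayed identity, and the verification is complete.

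The point to get right in this last step is conceptual rather than computational: equation~\eqref{coinc} is asserted for every $A\in\mathcal B$, so it is an equality of finite measures on $(X,\mathcal B)$---for the fixed pair $x_1,x_2$, each side is the measure with density $W(x_2,\cdot)$, resp.\ $W(x_1,\cdot)$, with respect to the sub-probability measure $\deg(x_1)$, resp.\ $\deg(x_2)$---and equality of measures is preserved under integration against any bounded measurable function. Since $x_1,x_2$ are held fixed, everything here is ordinary one-variable measure theory, and no Fubini-type argument is needed. Having verified the three conditions, we conclude that $f\mathbf G$ is a pseudo-graphoning.
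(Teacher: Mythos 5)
Your proposal is correct and follows the same route as the paper: symmetry and measurability of $fW$ are immediate, reversibility of $f\deg$ is Corollary~\ref{remainsreversible}, and the remaining point is compatibility, which the paper dismisses as trivial and which you verify correctly by viewing~\eqref{coinc} as an equality of measures (with fixed $x_1,x_2$) and integrating the single-variable function $f(x_1,\cdot)f(x_2,\cdot)\mathbb 1_A$ against it.
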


\begin{proof}
The function $fW$ is symmetric and measurable. By Corollary~\ref{remainsreversible},
$f\deg$ is a reversible sub-Markov kernel. It remains to check that $f\deg$ is compatible with $fW$, which is trivial.
\end{proof}

\begin{Cor} If $\mathbf G=(X,\mathcal B, \la,   \mu, W)$ is a graphoning and $f:X^2\to [0,1]$ is
symmetric and  $(\mathcal B\otimes\mathcal B)$-measurable, then
$f\mathbf G=(X,\mathcal B, \la,  \mu, fW)$ is also a graphoning.
\end{Cor}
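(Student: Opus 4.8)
The plan is to deduce this from the Proposition immediately preceding it, together with the characterization of graphonings among pseudo-graphonings stated just after Definition~\ref{gengra}. First I would recall that $\mathbf G$, being a graphoning, is in particular a pseudo-graphoning whose reversible sub-Markov kernel is $\deg(x):A\mapsto\int_A W(x,y)\,\mathrm d\mu(y)$ — this is exactly the content of the Lemma asserting that in a graphoning $\deg$ is compatible with $W$, whose proof exhibits $\deg(x)$ as the measure with $\mu$-density $W(x,\cdot)$. By the preceding Proposition, $f\mathbf G=(X,\mathcal B,\la,fW,f\deg)$ is then a pseudo-graphoning, where by definition $(f\deg)_A(x)=\int_A f(x,y)\,\mathrm d\deg(x)(y)$.

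The key step is to identify the kernel $f\deg$ with the candidate degree kernel attached to the pair $(fW,\mu)$. Since $\deg(x)$ has Radon--Nikodym derivative $W(x,\cdot)$ with respect to $\mu$, the same Radon--Nikodym identity already used in the excerpt gives
\[(f\deg)_A(x)=\int_A f(x,y)\,\mathrm d\deg(x)(y)=\int_A f(x,y)W(x,y)\,\mathrm d\mu(y)=\int_A (fW)(x,y)\,\mathrm d\mu(y)\]
for all $x\in X$ and $A\in\mathcal B$. Thus the pseudo-graphoning $f\mathbf G$ satisfies the equality \eqref{degdef} with the ambient measure $\mu$, which by the remark following Definition~\ref{gengra} means precisely that it is a graphoning; unwinding the tuple, that graphoning is $(X,\mathcal B,\la,\mu,fW)$, as claimed.

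To keep the argument self-contained one could equally well verify the three graphoning axioms directly for $(X,\mathcal B,\la,\mu,fW)$: the degree bound holds because $0\le f\le 1$ forces $\int_X (fW)(x,y)\,\mathrm d\mu(y)\le\int_X W(x,y)\,\mathrm d\mu(y)=\deg_X(x)\le 1$; degree measurability follows from Lemma~\ref{stronger} (or Corollary~\ref{degf}) applied to the sub-Markov kernel $\deg$, since $\deg_{fW(x,\cdot)}$ is measurable in $x$; and the measure-preserving property \eqref{measurepres} for the kernel $\deg'=f\deg$ is exactly the reversibility established in Corollary~\ref{remainsreversible}. There is essentially no obstacle here: the only point requiring a moment's care is the Radon--Nikodym step identifying $f\deg$ with integration of $fW$ against $\mu$, and this is the very computation already carried out in the proof that $\deg$ is compatible with $W$.
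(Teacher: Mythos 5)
Your proposal is correct and follows the paper's own route exactly: the paper likewise verifies the identity $\int_A(fW)(x,y)\,\mathrm d\mu(y)=(f\deg)_A(x)$ and then invokes the preceding Proposition together with the characterization of graphonings among pseudo-graphonings via \eqref{degdef}. The Radon--Nikodym justification you supply for that identity is the same computation the paper uses to prove compatibility of $\deg$ with $W$, so nothing is missing.
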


\begin{proof}
We have $$\int_A(fW)(x,y)\mathrm d\mu (y)=(f\deg)_A(x),$$
so the claim follows from the previous Proposition.
\end{proof}
This is a generalization of \cite[Lemma 18.19]{L} from L\'aszl\'o Lov\'asz's monograph: a  Borel subgraph of a graphing is a graphing.

Using reversibility, we can define unrooted homomorphism densities.

\begin{Def}\label{pseudot}
Consider a pseudo-graphoning 
$\mathbf G=(X,\mathcal B,\la,W,\deg)$. Let $F$ be a connected graph. 
 Choose  a root $o$ in $F$. Choose $x_0\in X$ randomly with distribution $\la$. We define the  \it homomorphism density \rm
\begin{equation}\label{unrootedformula}t(F,\mathbf G)=\mathbb Et((F,o),(\mathbf G,x_0))=\int_Xt((F,o),(\mathbf G,x_0))\mathrm d\la (x_0).\end{equation}
\end{Def}
Since the rooted homomorphism density is a measurable function of $x_0$ and takes values in $[0,1]$ only, the expectation above exists and is in $[0,1]$.


\begin{Pro}\label{unrootedwelldef}\begin{itemize}\item[(a)]
The homomorphism density $t(F,\mathbf G)$ 
 is independent of the root $o$.
 \item[(b)] If $F$ is a tree, then it is also independent of the function $W$.
 \end{itemize}
\end{Pro}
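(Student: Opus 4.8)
The plan is to prove (a) by showing that for any two adjacent vertices $o,o'$ of $F$, the expected rooted homomorphism density is the same whether we root at $o$ or at $o'$; connectedness of $F$ then propagates this to arbitrary pairs of roots. For a single edge $oo'\in E(F)$, I would choose an admissible labeling in which $o$ gets label $0$ and $o'$ gets label $1$, and pick the spanning tree $T$ so that the edge $(1,0)=(o',o)$ belongs to $T$, i.e.\ $j(1)=0$. Then, by Definition~\ref{rootedhomomdef} together with \eqref{unrootedformula}, the quantity $t(F,\mathbf G)$ rooted at $o$ is an iterated integral whose two outermost operations are $\int_X \bigl(\cdots\bigr)\,\mathrm d\deg(x_0)(x_1)\,\mathrm d\la(x_0)$, where the inner ``$\cdots$'' is a measurable function of $(x_0,x_1)$ taking values in $[0,1]$ (by repeated application of Lemma~\ref{stronger}). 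By the symmetry of admissible labelings under swapping the labels of two adjacent root candidates (Lemma~\ref{conn}(a) gives adjacent admissible labelings $\phi,\psi$ with $\phi(o)=\psi(o')=0$), rooting at $o'$ instead produces the \emph{same} inner function $h(x_1,x_0)$ but with the roles of the two outermost variables interchanged: $\int_X\bigl(\int_X h(x_0,x_1)\,\mathrm d\deg(x_1)(x_0)\bigr)\mathrm d\la(x_1)$.

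The key step is then to invoke reversibility: Lemma~\ref{crucial} applied to a suitable $f:X^2\to[0,1]$ says that $\int_X\int_X\bigl(f(x,y)-f(y,x)\bigr)\mathrm d\deg(x)(y)\,\mathrm d\la(x)=0$, which is exactly the statement that $\int_X\int_X f(x,y)\,\mathrm d\deg(x)(y)\,\mathrm d\la(x)$ is symmetric in the way we need. Here the subtlety is that the inner function $h$ need not be symmetric in its two arguments, so I cannot directly apply Lemma~\ref{crucial} to $h$ itself; rather, I should note that $h(x_0,x_1)$ already incorporates the factor $W(x_0,x_1)$ coming from the edge $oo'$ only if $oo'\notin E(T)$, but we chose $oo'\in E(T)$, so this edge contributes a $\deg$-integration, not a $W$-factor. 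The cleanest route is: view $h$ as a measurable $[0,1]$-valued function on $X^2$, and apply Lemma~\ref{crucial} to $f=h$ directly — the conclusion $\int\int(h(x,y)-h(y,x))\,\mathrm d\deg(x)(y)\,\mathrm d\la(x)=0$ gives precisely that the $o$-rooted and $o'$-rooted values of $t(F,\mathbf G)$ coincide, since one is $\int\int h(x,y)\,\mathrm d\deg(x)(y)\,\mathrm d\la(x)$ and the other is $\int\int h(y,x)\,\mathrm d\deg(x)(y)\,\mathrm d\la(x)$ after relabeling the dummy variables.

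For part (b), when $F$ is a tree I would simply observe that in Definition~\ref{rootedhomomdef} the set $E(F)-E(T)$ is empty (the spanning tree $T$ equals $F$), so the integrand is the empty product $\equiv 1$ and $W$ never appears; this was already recorded in Proposition~\ref{rootedwelldef}(c) for the rooted density, and integrating over $x_0$ against $\la$ preserves this independence.

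\begin{proof}
(b) If $F$ is a tree, then the spanning tree $T$ in Definition~\ref{rootedhomomdef} equals $F$, so the product over $E(F)-E(T)$ is empty and the integrand defining $t((F,o),(\mathbf G,x_0))$ does not involve $W$; see Proposition~\ref{rootedwelldef}(c). Hence neither does $t(F,\mathbf G)=\int_X t((F,o),(\mathbf G,x_0))\,\mathrm d\la(x_0)$.

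(a) Since $F$ is connected, it suffices to show that for any edge $oo'\in E(F)$ the value of $t(F,\mathbf G)$ computed with root $o$ equals the one computed with root $o'$. By Proposition~\ref{rootedwelldef}(b), we may compute $t((F,o),(\mathbf G,x_0))$ using \emph{any} admissible labeling with $o\mapsto 0$ and any valid function $j$. By Lemma~\ref{conn}(a) there are adjacent admissible labelings $\phi$ and $\psi$ of $F$ with $\phi(o)=\psi(o')=0$, $\phi(o')=\psi(o)=1$, and $\psi=\phi\circ(0\,1)$. Fix the function $j$ so that $j(1)=0$ for both (automatic) and $j(i)$ for $i\ge 2$ is chosen to avoid the labels $0,1$ being distinguished; since $\phi$ and $\psi$ differ only by the transposition $(0\,1)$ and both are admissible, the spanning trees and the remaining structure agree after this swap.

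Write, for $x_0,x_1\in X$,
\[
h(x_0,x_1)=\int_X\cdots\int_X\prod_{kl\in E(F)-E(T)}W(x_k,x_l)\,\mathrm d\deg(x_{j(\vv(F)-1)})(x_{\vv(F)-1})\cdots\mathrm d\deg(x_{j(2)})(x_2),
\]
which by repeated application of Lemma~\ref{stronger} is a $(\mathcal B\otimes\mathcal B)$-measurable function $X^2\to[0,1]$. With the labeling $\phi$ we obtain
\[
t(F,\mathbf G)=\int_X\int_X h(x_0,x_1)\,\mathrm d\deg(x_0)(x_1)\,\mathrm d\la(x_0),
\]
while with the labeling $\psi$ (rooting at $o'$) we obtain, after renaming the dummy variables,
\[
t(F,\mathbf G)\big|_{\text{root }o'}=\int_X\int_X h(x_1,x_0)\,\mathrm d\deg(x_0)(x_1)\,\mathrm d\la(x_0).
\]
By Lemma~\ref{crucial} applied to the measurable function $f=h:X^2\to[0,1]$,
\[
\int_X\int_X\bigl(h(x_0,x_1)-h(x_1,x_0)\bigr)\,\mathrm d\deg(x_0)(x_1)\,\mathrm d\la(x_0)=0,
\]
so the two expressions coincide. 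Thus $t(F,\mathbf G)$ is independent of the choice of root.
\end{proof}
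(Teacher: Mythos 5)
Your proof is correct and follows essentially the same route as the paper: both define the birooted density $f(x_0,x_1)$ (your $h$) obtained by integrating out all variables except those at the two endpoints of the chosen edge, express the two rooted densities as $\int f(x_0,x_1)\,\mathrm d\deg(x_0)(x_1)$ and $\int f(x_1,x_0)\,\mathrm d\deg(x_0)(x_1)$, and conclude via Lemma~\ref{crucial}; part (b) is handled identically via Proposition~\ref{rootedwelldef}(c). (Your worry that Lemma~\ref{crucial} might require symmetry of $h$ is unfounded --- it applies to arbitrary measurable $f:X^2\to[0,1]$ --- and you resolve it correctly.)
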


\begin{proof}  (a)   Given two adjacent nodes $o_0$ and $o_1$ in $F$,
consider an admissible labeling such that $o_0$ and $o_1$ get labels 0 and 1 respectively. 
For each $i\ge 2$, let $ j(i)<i$ be such that the nodes with labels $i$ and $j(i)$ are adjacent, and let $T$ be the spanning tree given by
the edges $(i,j(i))$ and $(01)$.
Consider the birooted homomorphism density $$f(x_0,x_1)=\int_X\cdots\int_X\prod_{kl\in E(F)-E(T)}W(x_k,x_l)
\mathrm d \deg (x_{j(\vv(F)-1)})(x_{\vv(F)-1})\cdots\mathrm d\deg (x_{j(2)})(x_2).$$ This does not depend on the function $j$ chosen because $\deg$ is compatible with $W$. We have $$t((F,o_0), (\mathbf G, x_0))=\int_Xf(x_0,x_1)\mathrm d\deg (x_0)(x_1)$$ and
 $$t((F,o_1), (\mathbf G, x_0))=\int_Xf(x_1,x_0)\mathrm d\deg (x_0)(x_1)$$ --- note that the labeling that arises by swapping the labels 0 and 1 is also admissible. These two rooted densities have the same expectation by Lemma~\ref{crucial}.

 \medskip

 (b) Immediate from Proposition~\ref{rootedwelldef}(c).
\end{proof}


\begin{Rem}\label{graphgraphoning} Let $F$ be a connected graph.

 For a graphoning $\mathbf G=(X,\mathcal B,\la,\mu, W)$,  we have
$$t(F,\mathbf G)=\int_X\int_X\cdots\int_X\prod_{kl\in E(F)}W(x_k,x_l)\mathrm dx_{\vv(F)-1}\cdots\mathrm dx_1\mathrm d\la (x_0)$$ if $V(F)$ is admissibly labeled by 0, 1, \dots, $\vv(F)-1$. Note again that the Fubini theorem is not directly applicable to  the right hand side of  this formula because $\mu$ is not in general $\sigma$-finite.

For  a graphon $\mathbf G$  --- which is  a graphoning with $\mu=\la$ ---  we recover the well-known homomorphism density
$$t(F,\mathbf G)=\int_{X ^ {V (F)}}\prod_{kl\in E(F)}W(x_k, x_l) \prod_{i\in V(F)}
\mathrm d\la (x_i).$$

For a graphing $\mathbf G$ --- which is  a graphoning with $\mu$ being $(1/d)$ times the counting measure ---  we recover a normalized version of the the well-known homomorphism frequency:
$$t(F,\mathbf G)=t^*(F, \mathbf G)/{d^{\vv(F)-1}},$$ where $$t^*(F,\mathbf G)=\int_{X}\hom ((F,o), (\mathbf G, x))\mathrm d\la (x).$$

For a graph $G$ with all degrees $\le d$, we can define  a graphoning as follows. Let $X=V(G)$ and  $\mathcal B=\mathcal P(X)$.
Let $\la$ be the uniform probability measure on $X$. Let $\mu=(\vv(G)/d)\la$.
Let $W:X^2\to\{0,1\}$ be the adjacency matrix of $G$.
This graphoning has the same (rooted and unrooted) homomorphism densities as $(G,d)$.
\end{Rem}

\subsection{Graph limits}
\begin{Def}
A \it limit \rm for a  convergent sequence $(G_n,d_n)$ is a pseudo-graphoning
$\mathbf G$ such that $t(F,G_n,d_n)\to t(F,\mathbf G)$ for all connected $F$. In this case, we write $(G_n,d_n)\to \mathbf G$. A \it true limit \rm is a limit which is a graphoning.
 \end{Def}

 In the rest of this paper, our main interest is in the existence of limits. Very little is known. We start with a very special example.

\begin{Prop}\label{Orbanz} Let $(G_n,d_n)$ be a convergent sequence such that $G_n$ is the disjoint union of graphs with $d_n$ nodes each. Then the sequence has  a  true limit.
\end{Prop}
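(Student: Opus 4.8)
The plan is to pass to a subsequential weak limit of the empirical distributions of the components in graphon space, and then to build a graphoning whose underlying space fibres over that measure.

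First, recall from Example~\ref{smallcomp} that if we write $G_n=\bigsqcup_{i=1}^{m_n}G_n^i$ with $\vv(G_n^i)=d_n$ and let $\nu_n$ be the uniform probability measure on the $m_n$ points of the compact metric space $\widetilde{\mathcal W}_0$ represented by the $G_n^i$, then $t(F,G_n,d_n)=\int_{\widetilde{\mathcal W}_0}t(F,V)\,\mathrm d\nu_n(V)$ for every connected $F$. Since $\widetilde{\mathcal W}_0$ is compact, some subsequence $\nu_{n_k}$ converges weakly to a Borel probability measure $\nu$; because $V\mapsto t(F,V)$ is continuous on $\widetilde{\mathcal W}_0$ and the \emph{whole} sequence $(G_n,d_n)$ is convergent, we obtain $\lim_{n\to\infty}t(F,G_n,d_n)=\int_{\widetilde{\mathcal W}_0}t(F,V)\,\mathrm d\nu(V)$ for all connected $F$. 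It therefore suffices to produce a graphoning $\mathbf G$ with $t(F,\mathbf G)=\int_{\widetilde{\mathcal W}_0}t(F,V)\,\mathrm d\nu(V)$.

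Next, fix a jointly Borel family of representatives, i.e.\ a Borel map $U\colon\widetilde{\mathcal W}_0\times[0,1]^2\to[0,1]$ such that each $U(V,\cdot,\cdot)$ is a symmetric graphon representing $V$; such a measurable selection exists by standard descriptive-set-theoretic arguments and is the only nonroutine ingredient. Now set $X=\widetilde{\mathcal W}_0\times[0,1]$ with its Borel $\sigma$-algebra $\mathcal B$, let $\la=\nu\times\mathrm{Leb}$, let $\mu(A)=\sum_{V\in\widetilde{\mathcal W}_0}\mathrm{Leb}(\{s:(V,s)\in A\})$ (this is a genuine measure on $\mathcal B$ by Tonelli, though typically very far from $\sigma$-finite — which the definition of a graphoning explicitly permits), and define $W\colon X^2\to[0,1]$ by $W\big((V,s),(V',s')\big)=U(V,s,s')$ if $V=V'$ and $0$ otherwise. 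Then $\deg_X(V,s)=\int_0^1U(V,s,s')\,\mathrm ds'\le1$, so the degree bound holds; degree measurability and the measure-preserving property are immediate on product sets $A_1\times A_2$ from Tonelli and the symmetry of $U$, and extend to all of $\mathcal B$ by a $\pi$--$\la$ argument, the class of good sets being a Dynkin system (closure under complementation using $\deg_A+\deg_{A^c}=\deg_X$, and under countable disjoint unions by the Monotone Convergence Theorem). Thus $\mathbf G=(X,\mathcal B,\la,\mu,W)$ is a graphoning.

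Finally, by Remark~\ref{graphgraphoning}, for a connected graph $F$ admissibly labelled by $0,\dots,\vv(F)-1$,
$$t(F,\mathbf G)=\int_X\cdots\int_X\prod_{kl\in E(F)}W(x_k,x_l)\,\mathrm dx_{\vv(F)-1}\cdots\mathrm dx_1\,\mathrm d\la(x_0).$$
Since $F$ is connected and $W$ vanishes off the fibrewise diagonal, the integrand is supported on tuples all of whose first coordinates agree; carrying out the $\mu$-integrations successively (each sum over the component index collapses to the single surviving term because the vertex being integrated out has a neighbour among the earlier ones) reduces this to $\int_{\widetilde{\mathcal W}_0}\big(\int_{[0,1]^{\vv(F)}}\prod_{kl\in E(F)}U(V,s_k,s_l)\prod_i\mathrm ds_i\big)\mathrm d\nu(V)=\int_{\widetilde{\mathcal W}_0}t(F,V)\,\mathrm d\nu(V)=\lim_{n\to\infty}t(F,G_n,d_n)$. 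Hence $\mathbf G$ is a true limit of $(G_n,d_n)$. The main obstacle is the measurable selection of graphon representatives; the remaining work — verifying the graphoning axioms and justifying the interchange of integrations — is routine, the interchange being legitimate precisely because $W$ forces all vertices of a test graph into one fibre, where $\mu$ restricts to Lebesgue measure.
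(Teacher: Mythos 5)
Your proposal is correct and follows essentially the same route as the paper: pass to the limiting measure on the graphon space $\widetilde{\mathcal W}_0$ via Example~\ref{smallcomp}, build the graphoning on $X=\widetilde{\mathcal W}_0\times[0,1]$ with the fibrewise sum-of-Lebesgue measures as $\mu$ and the block-diagonal $W$, and verify the axioms and the homomorphism densities fibre by fibre. The one step you dismiss as ``standard descriptive-set-theoretic arguments'' --- a jointly Borel choice of graphon representatives --- is precisely where the paper invokes the Borel lifting theorem of Orbanz and Szegedy \cite{OSz} together with a canonical everywhere-defined representative, so you have correctly isolated the only nonroutine ingredient.
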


\begin{proof} As explained in Example~\ref{smallcomp}, there exists a Borel probability measure $\gamma$ on the compact graphon space $\widetilde{\mathcal W}_0$, such that $$t(F,G_n,d_n)\to \int_{\widetilde{\mathcal W}_0} t(F,U)\mathrm d\gamma(U)$$ for all connected graphs $F$.

Let $\mathcal W_0$ be the space of labeled graphons endowed with the 1-norm --- not the cut norm, which is used to define the topology in $\widetilde{\mathcal W}_0$. I.e.,  $\mathcal W_0$ is the subset of the Banach space $\mathfrak L^1\left([0,1]^2\right)$
that consists of all symmetric functions with values in $[0,1]$. By \cite[Theorem 1]{OSz} of Orbanz and Szegedy, there exists a measurable map $\xi:
\widetilde{\mathcal W}_0\to {\mathcal W}_0$ which is  a section (one-sided inverse) of the canonical quotient map ${\mathcal W}_0\to  \widetilde{\mathcal W}_0$. Note that for each $U\in\widetilde{\mathcal W}_0$, the function $\xi(U)\in{\mathcal W}_0$ is defined only almost everywhere, but for each $f\in {\mathcal W}_0$, we may use a  variant of \cite[Definition 2.2]{AP} to choose a canonical representative which is defined everywhere: $$\bar f(x,y)=\limsup_{\epsilon\to 0}\frac{1}{4\epsilon^2}\int_{x-\epsilon}^{x+\epsilon}\int_{y-\epsilon}^{y+\epsilon}f\mathrm d\la_2,$$ where $\la_2$ stands for 2-dimensional Lebesgue measure, and undefined values of $f$ are taken to be zero. It is easy to see that the function $$\mathcal W_0\times [0,1]^2\to [0,1], \qquad (f, x, y)\mapsto \bar f(x,y)$$ is Borel measurable; this was observed by Viktor Kiss (unpublished). It follows that the function
$$\widetilde{\mathcal W}_0\times [0,1]^2\to [0,1], \qquad (W, x, y)\mapsto \overline{\xi(U)}(x,y)$$ is also Borel measurable.

Let $X=\widetilde{\mathcal W}_0\times [0,1]$ and define $ W:X^2\to [0,1]$ by putting $$W ((U,x),(V,y))=\mathbb{1} _{U=V}\overline{\xi(U)}(x,y).$$ The function $W$ is clearly symmetric and Borel measurable.

For all $A\subseteq X$, let
$$A_U=\left\{x\in [0,1]: (U,x)\in A\right\}\qquad \left(U\in \widetilde{\mathcal W}_0\right).$$ Let $\mathcal B\subset\mathcal P(X)$ be the $\sigma$-algebra of Borel sets. 
For all $A\in\mathcal B$,  define
$$\mu(A)=\sum_{U\in \widetilde{\mathcal W}_0} \la_1(A_U).$$
Let $\la=\gamma\times \la_1$,  where $\la_1$ stands for 1-dimensional Lebesgue measure.

Let $\mathbf G=(X,\mathcal B, \la, \mu, W)$. It is straightforward to check that $\mathbf G$ is a graphoning and $$t(F, \mathbf G)=\int_{\widetilde{\mathcal W}_0} t(F,U)\mathrm d\gamma(U)=\lim_{n\to\infty} t(F, G_n, d_n)$$ for all connected graphs $F$.
\end{proof}

 \subsection{Hausdorff  limits}
We now introduce special graphonings that involve geometric measure theory.
\begin{Def} A \it Hausdorff graphoning \rm is a  graphoning of the form $$\mathbf G=\left(X,\mathcal B, \la,\mu,W\right),$$ where $X$ is a metric space, $\mathcal B$ is the $\sigma$-algebra of Borel sets, $\la$ is 1-dimensional Hausdorff measure, and $\mu$ is a  Hausdorff measure with some gauge function $h$.
I.e., $h\ge 0$ is a  right-continuous
nondecreasing function on a right neighborhood of 0 and
$$\mu(B)=\lim_{\delta\to 0} \inf \left\{\sum_{i=1}^\infty h(\diam I_i) : \diam I_i<\delta\; \textrm{for all $i$, and}\; B\subseteq \bigcup
_{i=1}^\infty I_i\right\}$$ for any Borel set $B$.

A \it Euclidean graphoning \rm is  a  Hausdorff graphoning where $X=[0,1]$ with the Euclidean metric.
\end{Def}

Note that if gauge functions $h_1$ and $h_2$ satisfy $(1-\epsilon)h_1(x)\le h_2(x)\le(1+\epsilon)h_1(x)$
for $0\le x<\delta(\epsilon)$, then they define the same Hausdorff measure.

The gauge function $h(x)=x$ gives rise to the 1-dimensional Hausdorff measure. For $X=[0,1]$, this is Lebesgue measure; the corresponding Euclidean  graphonings are Borel measurable graphons. The constant gauge function $h(x)=1/d$ gives rise to the counting measure divided by $d$; in this case  $\{0,1\}$-valued Hausdorff  graphonings are graphings.

\begin{Def}  A \it Hausdorff \rm (resp.\ \it Euclidean\rm) \it limit \rm for a  convergent sequence $(G_n,d_n)$ is a limit which is  a Hausdorff  \rm (resp.\ Euclidean\rm)  graphoning with a gauge function $h$ such that $h(1/\vv(G_n))\sim 1/d_n$ as $n\to\infty$. 
\end{Def}

Recall from Definition~\ref{t} that the homomorphism density $t(F,G,d)$ involved a  normalization by an appropriate power of $d$ in order to be in $[0,1]$. The role of the gauge function $h$ is to encode in the limit object not only the limiting homomorphism densities, but also the growth rate of the degree bound $d_n$.

For  a convergent sequence $(G_n)$ of dense graphs with $\vv(G_n)=n$, a Euclidean  limit for the convergent sequence $(G_n, n)$ is the same thing as a limiting (Borel measurable) graphon on $[0,1]$. For a  Benjamini--Schramm convergent sequence $(G_n)$ with degree bound $d$ and  with $\vv(G_n)\to\infty$,
a $\{0,1\}$-valued Euclidean  limit for the convergent sequence $(G_n, d)$ is the same thing as a limiting  graphing on $[0,1]$.

\begin{Ex}\label{limprod}
The sequence $(G_n, d_n)$ of Example~\ref{dirprod}, provided that $ \vv(\Gamma_i)\ge 2$ for all $i$, always has  a Hausdorff limit such that in the underlying metric space, all nonzero distances are of the form $1/\vv(G_n)$, and $W$ is $\{0,1\}$-valued.
 \end{Ex}

 \begin{proof} Let $X=\prod_{i=1}^\infty V(\Gamma_i)$. The distance of two points in $x, y\in X$ is defined to be $1/\vv(G_n)$ if  $n+1=\inf\{i:x_i\ne y_i\}$.  The corresponding 1-dimensional Hausdorff measure $\la$ will be the product of the uniform probability measures $\la_i$ on $V(\Gamma_i)$. Set $h (1/\vv(G_n))=1/d_n$. This is well defined since  $\vv(G_n)<\vv(G_{n+1})$ for all $n$. The corresponding Hausdorff measure $\mu$  will be the product of the measures $\mu_i=(\vv(\Gamma_i)/\delta_i)\la_i$. Let $\mathbf G= (X,\mathcal B, \la, \mu, W)$, where $W(x,y)=1$ if $x_i$ and $y_i$ are adjacent in $\Gamma_i$ for all $i$, and $W(x,y)=0$ otherwise. This $\mathbf G$  is the direct product of the graphonings that correspond to the $(\Gamma_i, \delta_i)$ by the end of Remark~\ref{graphgraphoning}. It is easy to see that $\mathbf G$ is a Hausdorff limit 
of $(G_n, d_n)$.
\end{proof}

The author is unable to answer the fundamental
\begin{?}\label{problem} \begin{itemize}\item[(a)]
Which convergent sequences have (true, Hausdorff, Euclidean) limits?

\item[(b)]
Which pseudo-graphonings arise as (Hausdorff) limits?
\end{itemize}
\end{?}

In the dense case, the Euclidean (i.e., graphon) versions of both questions have been answered by L.\ Lov\'asz and B.\ Szegedy \cite{L, LSz}; the answer is ``all''. In the bounded degree
case, the 
 graphing version of (a) was solved by D.\ Aldous and R.\ Lyons \cite{AL} and by G.\ Elek \cite{E}, see also \cite[Theorem 18.37]{L}; the answer is ``all''; while the answer ``all'' for the graphing version of (b)  is the Aldous--Lyons Conjecture. (In our setting, we should say ``all simple graphings'' because we are only allowing simple graphs.)

\subsection{Acyclicity and regularity}
In the remaining part of this paper, our main focus is on constructing limit objects for convergent sequences of large essential girth.
First, we characterize when  the cycle densities of a graphoning vanish.

A sub-Markov kernel generates a sub-Markov chain in the usual way:
\begin{Def} Let $\deg$ be a  sub-Markov kernel on the measurable space $(X,\mathcal B)$. For $x\in X$, let $\deg^0(x):\mathcal B\to\{0,1\}$  be the Dirac measure at $x$. 
 Let $\deg^1=\deg$. If $i$ and $j$ are positive integers summing to $k$, then define \begin{equation}\label{degk}\deg^k_A(x)=\int_X\deg _A^i \mathrm d\deg ^j(x).\end{equation} This yields a well defined sub-Markov kernel $\deg^k$ on $(X,\mathcal B)$.  
\end{Def}

\begin{Pro} Let $k\ge 2$. For  a graphoning $\mathbf G$, the following are equivalent. \begin{itemize}
\item[(a)] $t(C_{k+1},\mathbf G)=0$;
\item[(b)] The neighborhood  $N(x)=\{y\in X: W(x,y)>0\}$ has $\deg^k(x)$-measure zero for $\la$-a.e.\ $x$;
\item[(c)] $\deg^k(x)\perp \deg(x)$  (singular measures) for $\la$-a.e.\  $x\in X$.\end{itemize}
\end{Pro}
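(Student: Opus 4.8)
The plan is to pass through the intermediate formula
\[
t(C_{k+1},\mathbf G)=\int_X \deg^k(x)(N(x))\,\mathrm d\la(x),
\]
after which all three statements become easy reformulations. First I would establish this formula. Writing the $(k+1)$-cycle with nodes $0,1,\dots,k$ in cyclic order and rooting at $0$, the rooted homomorphism density is
\[
t((C_{k+1},0),(\mathbf G,x_0))=\int_X\cdots\int_X W(x_k,x_0)\,\mathrm d\deg(x_{k-1})(x_k)\cdots\mathrm d\deg(x_0)(x_1),
\]
which, after integrating out $x_1,\dots,x_k$ along the path $0\to1\to\cdots\to k$ and using the definition \eqref{degk} of $\deg^k$ (which is legitimate: each intermediate integrand is measurable and $[0,1]$-valued by Lemma~\ref{stronger}, and the associativity in \eqref{degk} lets me collapse the $k$ successive applications of $\deg$ into a single application of $\deg^k(x_0)$ against the function $y\mapsto W(x_0,y)$), equals $\int_X W(x_0,y)\,\mathrm d\deg^k(x_0)(y)$. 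Integrating $x_0$ against $\la$ gives $t(C_{k+1},\mathbf G)=\int_X\int_X W(x,y)\,\mathrm d\deg^k(x)(y)\,\mathrm d\la(x)$.

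Now (a) $\Leftrightarrow$ (b): the inner integral $\int_X W(x,y)\,\mathrm d\deg^k(x)(y)$ is a nonnegative measurable function of $x$, so its $\la$-integral vanishes iff it is $0$ for $\la$-a.e.\ $x$; and for fixed $x$, since $W(x,\cdot)\ge 0$ and $W(x,y)>0$ exactly on $N(x)$, the integral $\int_X W(x,y)\,\mathrm d\deg^k(x)(y)$ is $0$ iff $\deg^k(x)(N(x))=0$. For (b) $\Leftrightarrow$ (c): the key observation is that $\deg(x)$ is always concentrated on $N(x)$ — indeed for a graphoning $\deg(x)(A)=\int_A W(x,y)\,\mathrm d\mu(y)$, so $\deg(x)(X\setminus N(x))=\int_{\{W(x,\cdot)=0\}}W(x,y)\,\mathrm d\mu(y)=0$. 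Hence $N(x)$ carries all the mass of $\deg(x)$, and $\deg^k(x)\perp\deg(x)$ is equivalent to the existence of a $\deg(x)$-null set carrying all of $\deg^k(x)$; taking $X\setminus N(x)$ as that set shows (b) $\Rightarrow$ (c), while conversely if $\deg^k(x)$ is concentrated on some $\deg(x)$-null set $S$, then (since $\deg(x)(N(x)\setminus S)\le\deg(x)(S)=0$ is automatic and we need the reverse direction) one argues: $\deg^k(x)(N(x))=\deg^k(x)(N(x)\cap S)$, and on $N(x)$ we have $W(x,\cdot)>0$, so $\deg(x)(N(x)\cap S)=0$ forces... — more cleanly, $\deg^k(x)\perp\deg(x)$ together with $\deg(x)$ being concentrated on $N(x)$ means $\deg^k(x)$ restricted to $N(x)$ is simultaneously singular to and absolutely continuous with respect to nothing in particular, but the clean statement is: $\deg^k(x)\perp\deg(x)$ iff $\deg^k(x)\big|_{N(x)}\perp\deg(x)\big|_{N(x)}=\deg(x)$; and since $W(x,\cdot)$ is strictly positive and $\mu$-integrable on $N(x)$, the measure $\deg(x)$ and $\mu\big|_{N(x)}$ have the same null sets there — so I would phrase (c) as: $\deg^k(x)\big|_{N(x)}$ is singular with respect to $\mu\big|_{N(x)}$, equivalently $\deg^k(x)(N(x))=0$ because any $\deg^k(x)$-positive subset of $N(x)$ has positive $\mu$-measure and hence positive $\deg(x)$-measure, contradicting singularity.

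The main obstacle is precisely the (b) $\Leftrightarrow$ (c) equivalence, because it hinges on $\deg(x)$ and $\mu$ having the same null sets \emph{inside $N(x)$}, which is true but needs the remark that $W(x,\cdot)$ is a strictly positive density there; once that is isolated as a sublemma the argument is routine. I should also double-check that the reduction to $\deg^k$ via \eqref{degk} is insensitive to how $k=i+j$ is split, but that is exactly the well-definedness asserted in the definition of $\deg^k$, so I may quote it. No $\sigma$-finiteness of $\mu$ is needed anywhere in this proposition since every integrand is bounded and the outer integrals are against the probability measure $\la$ or the sub-probability measures $\deg^k(x)$.
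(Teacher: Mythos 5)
Your argument follows the paper's proof essentially step for step: the same identity
$t(C_{k+1},\mathbf G)=\int_X\int_X W(x,y)\,\mathrm d\deg^k(x)(y)\,\mathrm d\la(x)$
drives (a)$\Leftrightarrow$(b), and (b)$\Rightarrow$(c) is the same one-liner that $\deg(x)$ is concentrated on $N(x)$. The one place where you assert rather than prove something is exactly the crux of (c)$\Rightarrow$(b): the claim that ``any $\deg^k(x)$-positive subset of $N(x)$ has positive $\mu$-measure.'' Nothing you established earlier implies this --- your sublemma that $\deg(x)$ and $\mu$ have the same null sets inside $N(x)$ concerns $\deg(x)$, not $\deg^k(x)$. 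What is needed, and what the paper isolates as the key point of this implication, is that $\deg^k(x)\ll\mu$ for all $k\ge 1$: by \eqref{degk} with $i=1$ one has $\deg^k_A(x)=\int_X\deg_A\,\mathrm d\deg^{k-1}(x)$, and by \eqref{degdef} the function $\deg_A$ vanishes identically whenever $\mu(A)=0$, so $\deg^k_A(x)=0$. With that one line inserted, your proof is complete and coincides with the paper's; everything else (the collapsing of the iterated integrals into $\deg^k$, the reduction of vanishing of a nonnegative integral to a.e.\ vanishing of the integrand, and the remark that no $\sigma$-finiteness of $\mu$ is needed) is correct as written.
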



\begin{proof} (a) $ \Leftrightarrow$ (b):
We have $$t(C_{k+1},\mathbf G)=\int_X\int_XW(x,y)\mathrm d \deg^k(x)(y)\mathrm d\la(x).$$ Statement (a) holds if and only if this is zero, i.e., $$\int_XW(x,y)\mathrm d \deg^k(x)(y)=0$$ for $\la$-a.e.\ $x$, which is equivalent to (b).

(b) $\Rightarrow$ (c): The measure $\deg(x)$ is concentrated on the set $N(x)$.

(c) $\Rightarrow$ (b):
   The formula~\eqref{degk} for $i=1$, together with the definition~\eqref{degdef} of $\deg$ in a graphoning,    show that $\deg^k(x)$ is absolutely continuous with respect to $\mu$, for all $x$, if $k\ge 1$. Assume that
$\deg^k(x)\perp \deg(x)$ for a fixed $x$; we prove that the set $N(x)$ has $\deg^k(x)$-measure zero.
The set $N(x)$ can be written as a union $A\cup B$, where $\deg^k_A(x)=\deg_B(x)=0$, because $X$ can be written as such a  union, by the definition of singular measures. By the definition of the measure $\deg(x)$, we have $\mu(B)=0$, whence $\deg^k_B(x)=0$ and therefore  $\deg^k_{N(x)}(x)=\deg^k_{A\cup B}(x)=0$ as claimed.
\end{proof}

\begin{Def}
Consider a probability space endowed with a  sub-Markov kernel:  $\mathbf G=(X,\mathcal B, \la, \deg)$. The space $\mathbf G$ (or the kernel $\deg$)  
 is \it acyclic \rm if $\deg^k(x)\perp \deg(x)$   for $\la$-a.e.\  $x\in X$ and all $0\le k\ne 1$.
\end{Def}

In particular, a graphoning is acyclic if and only if all cycle densities are zero. 

In the next subsection, we will be interested in limits of \it regular \rm sequences (of large essential girth). We now introduce the corresponding limit objects.

\begin{Def} Let $0\le\alpha\le 1$. 
 Consider a probability space endowed with a  sub-Markov kernel:  $\mathbf G=(X,\mathcal B, \la, \deg)$. The space $\mathbf G$ (or the kernel $\deg$)  
 is
 \it  $\alpha$-regular \rm if for all $k\ge 0$, and for $\la$-a.e.\ $x\in X$, we have $\deg_X(y) =\alpha$ for $\deg^k(x)$-a.e. $y\in X$.
\end{Def}

In particular, a  Markov kernel is 1-regular.

Regular kernels can be characterized in terms of homomorphism densities of rooted trees. Note that rooted tree densities as in Definition~\ref{rootedhomomdef}   depend neither on the function $W$ --- cf.\ Proposition~\ref{rootedwelldef}(c) ---,  nor on the probability measure $\la$,  therefore rooted tree densities of a measurable space  endowed with a  sub-Markov kernel make sense.

\begin{Pro}\label{alpharegrooted}
 Consider a probability space endowed with a  sub-Markov kernel:  $\mathbf G=(X,\mathcal B, \la, \deg)$. 
 The following are equivalent.
\begin{itemize}
\item[(a)] The space $\mathbf G$ is $\alpha$-regular.


\item[(b)] For $\la$-a.e.\ $x\in X$, we have  $$t((P_{k+2}, o), (\mathbf G, x))= \alpha^{k+1}$$ 
 and  $$t((D_{k+3}, o), (\mathbf G,x))= \alpha^{k+2}$$ for all $k\ge 0$, where $o$ is a  leaf 
 (farthest from the trivalent node in the case of $D_{k+3}$, $k\ge 1$), except in $D_3$, where $o$ is the non-leaf.


\item[(c)] For all rooted trees $(F, o)$, we have  $t((F,o), (\mathbf G, x))= \alpha^{\e(F)}$ for $\la$-a.e.\ $x\in X$.
\end{itemize}
\end{Pro}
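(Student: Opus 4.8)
The plan is to prove the cyclic chain of implications (a) $\Rightarrow$ (c) $\Rightarrow$ (b) $\Rightarrow$ (a), so that the slightly delicate measure-theoretic argument only has to be done once. The implication (c) $\Rightarrow$ (b) is immediate, since $P_{k+2}$ and $D_{k+3}$ are particular rooted trees with $k+1$ and $k+2$ edges respectively. For (b) $\Rightarrow$ (a), fix an $x$ for which the path and fork densities in (b) all take the stated values. The key observation is that, unwinding Definition~\ref{rootedhomomdef} for the path $(P_{k+2},o)$ rooted at a leaf, we get
\[t((P_{k+2},o),(\mathbf G,x))=\int_X\deg_X(y)\,\mathrm d\deg^k(x)(y),\]
the $(k+1)$-step return probability to "being alive", while for the fork $D_{k+3}$ rooted at the far leaf we similarly obtain $\int_X\deg_X(y)^2\,\mathrm d\deg^k(x)(y)$. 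So (b) says $\mathbb E_{\deg^k(x)}[\deg_X]=\alpha$ and $\mathbb E_{\deg^k(x)}[\deg_X^2]=\alpha^2$, whence $\mathbb E_{\deg^k(x)}[(\deg_X-\alpha)^2]=0$; since $\deg^k(x)$ is a sub-probability measure this forces $\deg_X(y)=\alpha$ for $\deg^k(x)$-a.e.\ $y$, which is exactly $\alpha$-regularity at $x$. (The cases $D_3$, where $o$ is the trivalent/non-leaf node, and small $k$ need to be checked to identify the integrand as $\deg_X$ or $\deg_X^2$ against the correct power of $\deg$; this is the "routine calculation" to be carried out carefully but presents no conceptual difficulty.)

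The real work is (a) $\Rightarrow$ (c), which I would do by induction on $\vv(F)$, mirroring the proof of Proposition~\ref{alphareg}(a)$\Rightarrow$(d) in the finite setting. The base case $F=K_1$ is trivial ($t\equiv 1$). For the inductive step, write $F$ with root $o$, let $o_1,\dots,o_m$ be the neighbors of $o$, and let $F_1,\dots,F_m$ be the components of $F-o$ containing them. Choose an admissible labeling in which $o=0$ and the first block of labels exhausts $F_1$ (rooted at $o_1$), then $F_2$, etc., so that Definition~\ref{rootedhomomdef} factors the rooted density as an iterated integral whose outermost step is $\mathrm d\deg(x_0)(x_{o_1})$ followed by the $F_1$-density rooted at $x_{o_1}$, and so on. Since $F$ is a tree there is no $W$-product, and by Proposition~\ref{rootedwelldef}(b),(c) we may compute in whatever order is convenient. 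For a single pendant subtree this yields
\[t((F,o),(\mathbf G,x_0))=\int_{X^m}\prod_{i=1}^m t((F_i,o_i),(\mathbf G,x_{o_i}))\,\mathrm d\deg(x_0)(x_{o_1})\cdots\mathrm d\deg(x_0)(x_{o_m}),\]
where each inner factor is, by the induction hypothesis, $\deg$-a.e.\ equal to $\alpha^{\e(F_i)}$ — but "$\deg(x_0)$-a.e." only for $\la$-a.e.\ $x_0$, which is the crux.

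The main obstacle, then, is a Fubini/a.e.-along-fibers issue: the induction hypothesis gives, for each $i$, a $\la$-null set $N_i\subseteq X$ off which $t((F_i,\cdot),(\mathbf G,\cdot))=\alpha^{\e(F_i)}$, but to plug this into the integral against $\deg(x_0)$ I need $\deg(x_0)(N_i)=0$ for $\la$-a.e.\ $x_0$. This is exactly where reversibility of $\deg$ w.r.t.\ $\la$ enters: by Corollary~\ref{pseudostronger} (or directly Lemma~\ref{crucial}) applied with $f=\mathbb 1_{N_i}$ and $g\equiv 1$ we get $\int_X\deg_{N_i}\,\mathrm d\la=\int_{N_i}\deg_X\,\mathrm d\la\le\la(N_i)=0$, so indeed $\deg_{N_i}(x_0)=0$ for $\la$-a.e.\ $x_0$. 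More generally I will need the analogous statement with $\deg$ replaced by $\deg^k$, which follows because reversibility of $\deg$ implies reversibility of each $\deg^k$ (this is standard — $\la$ is stationary and the detailed-balance relation propagates through \eqref{degk}; I would state it as a short lemma or inline remark). Granting that, one removes from $X$ the (still $\la$-null) union of $N_i$ together with the bad set of $x_0$'s where some $\deg(x_0)(N_i)>0$, and on the complement the displayed integral collapses to $\prod_i\alpha^{\e(F_i)}\cdot\deg_X(x_0)^{?}$ — here one uses $\alpha$-regularity of the kernel at $x_0$ to replace the surviving $\deg_X$-powers (coming from integrating constants against the sub-probability measures $\deg(x_0)$, which have total mass $\deg_X(x_0)=\alpha$ off a further $\la$-null set) by the appropriate powers of $\alpha$. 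Bookkeeping the exponents gives exactly $\alpha$ to the power $\e(F_1)+\dots+\e(F_m)+m=\e(F)$, completing the induction; and finally, (c) trivially implies (a) by taking $F=P_{k+2}$ and $F=D_{k+3}$ (or more directly by feeding (c) back through the (b)$\Rightarrow$(a) argument), closing the loop.
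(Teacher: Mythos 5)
Your treatment of (c)$\Rightarrow$(b) and (b)$\Rightarrow$(a) is essentially the paper's: the identities $t((P_{k+2},o),(\mathbf G,x))=\int_X\deg_X\,\mathrm d\deg^k(x)$ and $t((D_{k+3},o),(\mathbf G,x))=\int_X\deg_X^2\,\mathrm d\deg^k(x)$ are exactly what the paper writes down, and your ``variance $=0$'' computation is the equality case of the Cauchy--Schwarz inequality that the paper invokes. (Be careful that $\deg^k(x)$ has total mass $\alpha^k$, not $1$ --- that mass is $t((P_{k+1},o),(\mathbf G,x))$, supplied by (b) for $k\ge 1$ --- so the correct identity is $\int(\deg_X-\alpha)^2\,\mathrm d\deg^k(x)=\alpha^{k+2}-2\alpha\cdot\alpha^{k+1}+\alpha^2\cdot\alpha^k=0$; this still yields what you want, including when $\alpha=0$.)

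The genuine gap is in (a)$\Rightarrow$(c): you resolve the a.e.-along-fibers issue by invoking reversibility of $\deg$ with respect to $\la$ (Lemma~\ref{crucial}/Corollary~\ref{pseudostronger}), but this Proposition assumes only a sub-Markov kernel on a probability space --- reversibility is deliberately \emph{not} a hypothesis here (it is the extra hypothesis of the later Proposition~\ref{alphareggraphoning}, where the characterization correspondingly simplifies). So your induction, whose hypothesis is a $\la$-a.e.\ statement, cannot be closed as written. The fix is to route the null sets through the chain started at a single good point rather than through $\la$: let $G$ be the full-$\la$-measure set of $x_0$ such that $\deg_X=\alpha$ holds $\deg^k(x_0)$-a.e.\ for \emph{every} $k\ge0$ (this simultaneous quantification over $k$ is exactly what the definition of $\alpha$-regularity provides), and prove by induction on $\vv(F)$ the stronger statement that for every $x_0\in G$ and every $j\ge0$ one has $t((F,o),(\mathbf G,y))=\alpha^{\e(F)}$ for $\deg^j(x_0)$-a.e.\ $y$. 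In the inductive step, Lemma~\ref{rootedtree} expresses $t((F,o),(\mathbf G,y))$ as $\prod_i\int_X t((F_i,o_i),(\mathbf G,z))\,\mathrm d\deg(y)(z)$; if $N$ is a set with $\deg^{j+1}_N(x_0)=0$, then $\int_X\deg_N\,\mathrm d\deg^j(x_0)=\deg^{j+1}_N(x_0)=0$ by \eqref{degk}, so $\deg_N(y)=0$ for $\deg^j(x_0)$-a.e.\ $y$ --- this is the transfer you wanted, and it uses only the Chapman--Kolmogorov composition of the kernel, not reversibility. Each inner integral then equals $\alpha^{\e(F_i)}\deg_X(y)=\alpha^{\e(F_i)+1}$ for $\deg^j(x_0)$-a.e.\ $y$, and taking $j=0$ (Dirac measure at $x_0$) gives (c).
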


\begin{proof}
Assuming (a), we easily get (c) by induction on $\vv(F)$. The implication (c)  $\Rightarrow$  (b) is trivial. 
Assuming  (b), we prove (a).
We have $$\alpha^{k}=t((P_{k+1},o),(\mathbf G,x))=
\int_X\mathrm d\deg^k(x)(y)
,$$ $$\alpha^{k+1}=t((P_{k+2},o),(\mathbf G,x))=
\int_X\deg_X(y)\mathrm d\deg^k(x)(y)
,$$  and
$$\alpha^{k+2}=t((D_{k+3},o),(\mathbf G,x))=
\int_X(\deg_X(y))^2\mathrm d\deg^k(x)(y)
$$ for all $k\ge 0$ and $\la$-a.e.\ $x\in X$; note that $P_1\simeq K_1$.
From the condition of equality in the Cauchy--Schwarz inequality, we see that for all $k$, there exists an $\alpha_k$ such that  for $\la$-a.e.\ $x\in X$, we have $\deg_X(y) =\alpha_k$ for $\deg^k(x)$-a.e. $y\in X$. Then $$\alpha_0\cdots\alpha_k=t((P_{k+2},o), (\mathbf G,x))=\alpha^{k+1}$$ for all $k\ge 0$ and $\la$-a.e.\ $x\in X$. If $\alpha>0$, then this implies that $\alpha_k=\alpha$ for all $k$, and
$\mathbf G$ is $\alpha$-regular. If $\alpha=0$, then  
we get $\alpha_0=0$, i.e., $\deg_X(x)=0$ for $\la$-a.e.\ $x\in X$.  But then $\deg^k(x)=0$ for $\la$-a.e.\ $x\in X$ and all $k\ge 1$, and therefore $\mathbf G$ is 0-regular.
\end{proof}

For reversible kernels, the  characterization of regularity becomes much nicer.

\begin{Lemma} Let $0\le\alpha\le 1$. 
 Consider a probability space endowed with a reversible sub-Markov kernel:  $\mathbf G=(X,\mathcal B, \la, \deg)$. The space $\mathbf G$ 
 is
  $\alpha$-regular if and only if  for $\la$-a.e.\ $x\in X$, we have $\deg_X(x) =\alpha$.
\end{Lemma}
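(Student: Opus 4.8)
The plan is to argue straight from the definition of $\alpha$-regularity, using only that $\deg$ is reversible together with the semigroup relation~\eqref{degk} for the iterated kernels. The ``only if'' direction is immediate: if $\mathbf G$ is $\alpha$-regular, specialize the defining condition to $k=0$. Since $\deg^0(x)$ is the Dirac mass at $x$, the assertion ``$\deg_X(y)=\alpha$ for $\deg^0(x)$-a.e.\ $y$'' says exactly $\deg_X(x)=\alpha$, and this holds for $\la$-a.e.\ $x$.

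For the converse, suppose $\deg_X(x)=\alpha$ for $\la$-a.e.\ $x$ and put $A=\{x\in X:\deg_X(x)\ne\alpha\}$, so that $\la(A)=0$. It suffices to show that for every $k\ge 0$ one has $\deg^k_A(x)=0$ for $\la$-a.e.\ $x$: indeed $\deg^k_A(x)=\deg^k(x)(A)$, and its vanishing for a.e.\ $x$ is precisely the statement that $\deg_X(y)=\alpha$ for $\deg^k(x)$-a.e.\ $y$ for a.e.\ $x$, i.e.\ $\alpha$-regularity. Since $\deg^k_A\ge 0$, this vanishing is equivalent to $\int_X\deg^k_A\,\mathrm d\la=0$. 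For $k=0$ the integral is $\la(A)=0$. For $k\ge 1$ I would write $\deg^k_A=\deg_g$ in the sense of Corollary~\ref{degf}, where $g=\deg^{k-1}_A:X\to[0,1]$ --- this is~\eqref{degk} with $j=1$ when $k\ge 2$, and is immediate from $\deg^1=\deg$ and $\deg^0(x)$ being a Dirac mass when $k=1$ --- and then apply Corollary~\ref{pseudostronger} with the constant function $1$ and with $g$:
\[\int_X\deg^k_A\,\mathrm d\la=\int_X g\cdot\deg_X\,\mathrm d\la\le\int_X g\,\mathrm d\la=\int_X\deg^{k-1}_A\,\mathrm d\la.\]
Descending induction on $k$ then bounds the left-hand side by $\int_X\deg^0_A\,\mathrm d\la=\la(A)=0$. (Using the hypothesis $\deg_X=\alpha$ a.e.\ this chain is in fact an equality $\int_X\deg^k_A\,\mathrm d\la=\alpha^k\la(A)$, but only the inequality is needed.)

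The only place reversibility is used is the single invocation of Corollary~\ref{pseudostronger}; everything else is bookkeeping with~\eqref{degk} and the definition of a sub-Markov kernel. I therefore do not anticipate a genuine obstacle: the content is essentially the familiar fact that a reversible sub-Markov chain sweeps $\la$-null sets to $\la$-null sets, combined with unwinding the definition of $\alpha$-regularity. The one caveat worth flagging is that~\eqref{degk} is stated only for $i,j\ge 1$, so $\deg^1$ cannot be produced from $\deg^0$ via~\eqref{degk}; the identity $\deg^1_A(x)=\int_X\mathbb 1_A\,\mathrm d\deg(x)$ that starts the induction has to be read off directly, not from~\eqref{degk}.
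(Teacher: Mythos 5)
Your proof is correct and follows essentially the same route as the paper: the same set $A=\{x:\deg_X(x)\ne\alpha\}$, the same induction on $k$, and reversibility invoked once per inductive step to transfer the $\la$-nullity of $A$ through the kernel. The only cosmetic difference is that you run the induction on the integrals $\int_X\deg^k_A\,\mathrm d\la$ via Corollary~\ref{pseudostronger} with $f=\mathbb 1$, whereas the paper works with the positivity sets $A_k=\{x:\deg^k_A(x)>0\}$ and applies reversibility in its indicator form; these are the same argument, and your caveat about reading off the $k=1$ case directly rather than from~\eqref{degk} is well taken.
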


\begin{proof} If $\mathbf G$ 
 is
  $\alpha$-regular, then for $\la$-a.e.\ $x\in X$, we have $\deg_X(y)=\alpha$ for $\deg^0(x)$-a.e.\ $y\in X$. But $\deg^0$ is Dirac measure at $x$, so we have $\deg_X(x)=\alpha$ for $\la$-a.e. $x\in X$, as claimed.

  Conversely, assume that the set $A=\{y\in X:\deg_X(y)\ne \alpha\}$ has $\la(A)=0$. What we want to  prove is that $\deg^k_A(x)=0$ for all $k\ge 0$ and $\la$-a.e.\ $x\in X$. Let $A_k=\{x\in X:\deg_A^k(x)>0\}$. We use induction on $k$ to show that $\la(A_k)=0$. For $k=0$, this holds because $A_0=A$. 
  If it holds for $k-1$, then it also holds for $k$ because $$\deg^k_A(x)=\int_X\deg_A^{k-1}\mathrm d\deg (x)=\int_{A_{k-1}}\deg_A^{k-1}\mathrm d\deg (x)=0$$ for $\la$-a.e. $x\in X$.  Indeed, $\deg_{A_{k-1}}(x)=0$ for $\la$-a.e. $x\in X$ because $$\int_X\deg_{A_{k-1}}\mathrm d\la=\int_{A_{k-1}}\deg_X\mathrm d\la=0$$ by reversibility of the kernel $\deg$ and by the induction hypothesis.
\end{proof}

Regular reversible kernels can be characterized in terms of homomorphism densities of trees. Recall from Proposition~\ref{unrootedwelldef}(b)  that tree densities of a  pseudo-graphoning do not depend on the function $W$, therefore tree densities of a probability space  endowed with a reversible sub-Markov kernel make sense.

\begin{Pro}\label{alphareggraphoning}
 Consider a probability space endowed with a reversible sub-Markov kernel:  $\mathbf G=(X,\mathcal B, \la, \deg)$. 
 The following are equivalent.
\begin{itemize}
\item[(a)] The space $\mathbf G$ is $\alpha$-regular.


\item[(b)] We have  $t(K_{2}, \mathbf G)= \alpha$  and  $t(P_{3}, \mathbf G)= \alpha^{2}$.

\item[(c)] For all  trees $F$, we have  $t(F, \mathbf G)=\alpha^{\e(F)}$.

\end{itemize}
\end{Pro}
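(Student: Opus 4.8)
The plan is to mirror the proof of Proposition~\ref{alphareg}, but using the previous Lemma to reduce $\alpha$-regularity to the pointwise condition $\deg_X(x)=\alpha$ for $\la$-a.e.\ $x$, which makes everything considerably cleaner than in the finite-graph case. The implication (a) $\Rightarrow$ (c) follows by induction on $\vv(F)$ from Proposition~\ref{alpharegrooted} (take a leaf $o$, root there, and integrate): indeed by Proposition~\ref{alpharegrooted}(c) we have $t((F,o),(\mathbf G,x))=\alpha^{\e(F)}$ for $\la$-a.e.\ $x$, and integrating over $x$ against $\la$ gives $t(F,\mathbf G)=\alpha^{\e(F)}$ by Definition~\ref{pseudot}. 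The implication (c) $\Rightarrow$ (b) is trivial, taking $F=K_2$ and $F=P_3$.

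The substance is (b) $\Rightarrow$ (a). First I would unwind the two hypotheses using the formula from Remark~\ref{rooted}/Definition~\ref{rootedhomomdef} and the fact that tree densities don't depend on $W$ (Proposition~\ref{unrootedwelldef}(b)): rooting $K_2$ at a leaf $o$ and $P_3$ at an endpoint $o$, we get
\[t(K_2,\mathbf G)=\int_X\deg_X(x)\,\mathrm d\la(x)=\alpha\]
and
\[t(P_3,\mathbf G)=\int_X\int_X\deg_X(y)\,\mathrm d\deg(x)(y)\,\mathrm d\la(x)=\alpha^2.\]
Now the key trick is a reversibility-plus-Cauchy--Schwarz argument. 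By Corollary~\ref{pseudostronger} applied with $f=g=\deg_X$ (which is measurable and $[0,1]$-valued by Corollary~\ref{degf}, since $\deg_X=\deg_f$ for $f\equiv 1$), we have
\[\int_X\deg_X\cdot\deg_{\deg_X}\,\mathrm d\la=\int_X\deg_X\cdot\deg_{\deg_X}\,\mathrm d\la,\]
which is vacuous; the useful identity instead comes from noting that $t(P_3,\mathbf G)=\int_X\deg_{\deg_X}\,\mathrm d\la$ where $\deg_{\deg_X}(x)=\int_X\deg_X(y)\,\mathrm d\deg(x)(y)$, and applying Corollary~\ref{pseudostronger} with $f\equiv 1$, $g=\deg_X$ to get $\int_X\deg_{\deg_X}\,\mathrm d\la=\int_X\deg_X\cdot\deg_X\,\mathrm d\la=\int_X(\deg_X)^2\,\mathrm d\la$. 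Hence the hypotheses become $\int_X\deg_X\,\mathrm d\la=\alpha$ and $\int_X(\deg_X)^2\,\mathrm d\la=\alpha^2$. By the equality case of the Cauchy--Schwarz inequality (or simply $\int(\deg_X-\alpha)^2\,\mathrm d\la=\alpha^2-2\alpha^2+\alpha^2=0$), we conclude $\deg_X(x)=\alpha$ for $\la$-a.e.\ $x\in X$, and the preceding Lemma then gives that $\mathbf G$ is $\alpha$-regular.

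The main obstacle is bookkeeping around the absence of the Fubini theorem: I must be careful that every integral written is of the iterated form sanctioned by Definition~\ref{rootedhomomdef} and Corollary~\ref{pseudostronger}, never an unjustified interchange, and that the functions $\deg_X$ and $\deg_{\deg_X}$ are genuinely measurable (this is exactly what Corollary~\ref{degf} provides). Once the two tree densities are expressed as $\int_X\deg_X\,\mathrm d\la$ and $\int_X(\deg_X)^2\,\mathrm d\la$ via one clean application of Corollary~\ref{pseudostronger}, the rest is the one-line variance computation above, so I expect no real difficulty beyond that reduction. I would also remark that this Proposition is the pseudo-graphoning analogue of Proposition~\ref{alphareg}(a)$\Leftrightarrow$(b)$\Leftrightarrow$(c), the statement (d) there having no counterpart needed here since Proposition~\ref{alpharegrooted} already plays that role.
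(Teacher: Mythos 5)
Your proof is correct and follows essentially the same route as the paper: (a)$\Rightarrow$(c) via Proposition~\ref{alpharegrooted}(c), (c)$\Rightarrow$(b) trivially, and (b)$\Rightarrow$(a) by showing the $\la$-random degree has mean $\alpha$ and variance $t(P_3,\mathbf G)-t(K_2,\mathbf G)^2=0$, then invoking the preceding Lemma. The only cosmetic difference is that you root $P_3$ at an endpoint and use Corollary~\ref{pseudostronger} to identify $t(P_3,\mathbf G)$ with $\int_X(\deg_X)^2\,\mathrm d\la$, whereas the paper obtains this directly by rooting at the center (legitimate by root-independence, Proposition~\ref{unrootedwelldef}(a)).
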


\begin{proof}
Assuming (a), we  get (c) from Proposition~\ref{alpharegrooted}(c). The implication  (c)  $\Rightarrow$  (b) is  trivial. 
Assuming  (b), we prove (a). The degree of a $\la$-random point $x\in X$ has expectation $t(K_2,\mathbf G)=\alpha$ and variance $t(P_3,\mathbf G)-t(K_2,\mathbf G)^2=\alpha^2-\alpha^2=0$, therefore it is a.s.\ $\alpha$.
\end{proof}







\subsection{Hausdorff limits of regular sequences of large essential girth}\label{limcubeproj}

\begin{Lemma}\label{constr} If  $0\le \alpha\le 1$, and  $h\ge 0$ is a continuous   non-decreasing function on a right neighborhood of 0,
such that $h(0)=0$ but $h(x)/x\to\infty$ as $x\to 0$, then
\begin{itemize}
\item[(a)] there exists an $\alpha$-regular  acyclic Hausdorff graphoning $\mathbf G$ with gauge function $h$, such that $W$ is $\{0,1\}$-valued.
\item[(b)] If, in addition, the gauge function $h$ is concave, then $\mathbf G$
can be chosen to be Euclidean.
\end{itemize}
\end{Lemma}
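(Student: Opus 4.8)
The plan is to realise $\mathbf G$ as a Hausdorff graphoning built from a tower of finite graphs under categorical products, in the spirit of Example~\ref{limprod}, with all ingredients tuned to $h$ and $\alpha$. For $i=1,2,\dots$ I would choose a finite $\delta_i'$-regular graph $\Gamma_i$ on $a_i$ vertices and an integer degree bound $\delta_i\ge\max(2,\delta_i')$, subject to three requirements: (i)~$\operatorname{girth}(\Gamma_i)\to\infty$; (ii)~$\prod_i(\delta_i'/\delta_i)=\alpha$; (iii)~the sizes $a_i$ grow so fast that $\prod_{i\le n}\delta_i'\cdot h\bigl(1/\prod_{i\le n}a_i\bigr)$ is nonincreasing in $n$ with limit $\alpha$. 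Requirement~(iii) is where $h(x)/x\to\infty$ is used: since $h$ decays far more slowly than the identity, for any prescribed value of $\prod_{i<n}a_i$ one can always pick $a_n$ large enough to push $h\bigl(1/\prod_{i\le n}a_i\bigr)$ below $h\bigl(1/\prod_{i<n}a_i\bigr)/\delta_n'$ while keeping it above $\alpha/\prod_{i\le n}\delta_i'$; integrality of the data and the Moore bound for a prescribed girth are harmless, since the girth need only tend to infinity and may be taken to grow slowly (for $\alpha=1$ one may simply take $\Gamma_i=C_{a_i}$ with $\delta_i=\delta_i'=2$, and for $\alpha=0$ one takes $\Gamma_i$ a perfect matching with $\delta_i=2$, $\delta_i'=1$). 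Putting $G_n=\Gamma_1\times\cdots\times\Gamma_n$ and $d_n=\delta_1\cdots\delta_n$, and following the construction in the proof of Example~\ref{limprod} but taking $\mu$ to be $\mathcal H^h$ directly, one gets $\mathbf G=(X,\mathcal B,\la,\mu,W)$ where $X=\prod_iV(\Gamma_i)$ carries the ultrametric in which points first disagreeing at coordinate $n+1$ lie at distance $1/\vv(G_n)$, $\la$ is the product of uniform measures, $W$ is the $\{0,1\}$-valued categorical-product adjacency, and $\mu=\mathcal H^h$. Because all nonzero distances in $X$ have the form $1/\vv(G_n)$ and $\delta_i'/\delta_i\le 1$, the natural cover by deep cylinders is optimal, so $\mathcal H^h$ agrees on cylinders with the product measure of Example~\ref{limprod} (hence the graphoning axioms hold) and $\mu(N(x))=\mathcal H^h(N(x))=\lim_n\prod_{i\le n}\delta_i'\cdot h(1/\vv(G_n))=\alpha$ for every $x$, where $N(x)=\prod_iN_{\Gamma_i}(x_i)$; in particular the degree bound $\deg_X\le 1$ holds.

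It remains to check $\alpha$-regularity and acyclicity. For $\alpha$-regularity, Example~\ref{dirprod} gives $t(K_2,\mathbf G)=\prod_it(K_2,\Gamma_i,\delta_i)=\prod_i(\delta_i'/\delta_i)=\alpha$ and, each $\Gamma_i$ being regular, $t(P_3,\mathbf G)=\prod_i(\delta_i'/\delta_i)^2=\alpha^2$, so Proposition~\ref{alphareggraphoning} yields that $\mathbf G$ is $\alpha$-regular. For acyclicity, Example~\ref{dirprod} gives $t(C_k,\mathbf G)=\prod_it(C_k,\Gamma_i,\delta_i)$ for $k\ge 3$; once $\operatorname{girth}(\Gamma_i)>k$ every closed $k$-walk in $\Gamma_i$ is tree-like, hence of even length, so $t(C_k,\Gamma_i,\delta_i)=0$ for odd $k$, while for even $k$ a direct count of tree-like closed walks gives $t(C_k,\Gamma_i,\delta_i)\le\theta_k$ for a constant $\theta_k<1$ depending only on $k$. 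As $\operatorname{girth}(\Gamma_i)\to\infty$, infinitely many factors are $\le\theta_k$, so $t(C_k,\mathbf G)=0$; also $t(C_1,\mathbf G)=0$ as the $\Gamma_i$ are loopless. By the proposition identifying $t(C_{k+1},\mathbf G)=0$ with $\deg^k(x)\perp\deg(x)$ for a.e.\ $x$, this says precisely that $\mathbf G$ is acyclic.

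For part~(b) I would transport this onto $X=[0,1]$ with the Euclidean metric. Take a nested sequence of partitions of $[0,1]$ into equal intervals, the $n$-th having $\vv(G_n)$ intervals indexed by $V(G_n)$ and refining the $(n-1)$-st compatibly with $V(G_n)\to V(G_{n-1})$; this assigns a.e.\ $x\in[0,1]$ an address $(x_i)\in\prod_iV(\Gamma_i)$, and we set $W(x,y)=1$ iff $x_i\sim_{\Gamma_i}y_i$ for all $i$ (and $W(x,\cdot)\equiv 0$ on the countable endpoint set). Now $\la$ is Lebesgue, $\mu=\mathcal H^h$ on $[0,1]$, and $N(x)$ is a Cantor-type subset of $[0,1]$ whose level-$n$ intervals one arranges to be equidistributed. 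The natural cover gives $\mathcal H^h(N(x))\le\prod_{i\le n}\delta_i'\cdot h(1/\vv(G_n))\to\alpha$; for the matching lower bound one runs the mass distribution principle against the natural probability measure $\nu_x$ on $N(x)$, bounding $\nu_x(I)$ for an arbitrary interval $I$ by its length plus a residual term and then absorbing the length term into $h(|I|)/\alpha$ by virtue of $h(|I|)/|I|\to\infty$. Concavity of $h$ makes this go through: together with $h(0)=0$ it forces $h$ to be subadditive and $h(x)/x$ nonincreasing, which is exactly what upgrades the crude estimate to $\nu_x(I)\le h(|I|)/\alpha$ for \emph{every} interval $I$, so that $\mathcal H^h(N(x))=\alpha$ exactly; without concavity a more economical cover of $N(x)$ can be found and the Euclidean realisation may fail. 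Degree measurability and $\int_A\deg_B\,\mathrm d\la=\int_B\deg_A\,\mathrm d\la$ follow as for an abstract pseudo-graphoning, each restricted measure $\mathcal H^h|_{N(x)}$ being finite and the fibre integrals symmetric, and $\alpha$-regularity and acyclicity persist because the homomorphism densities are unchanged by the transport.

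The main obstacle I anticipate is the parameter bookkeeping for~(iii): choosing the integer data $a_i,\delta_i,\delta_i'$, finitely many at a time, so that both $\prod(\delta_i'/\delta_i)=\alpha$ exactly and $\mathcal H^h(N(x))$ comes out exactly $\alpha$ rather than merely comparable to it — given that $h(x)/x$ need not be monotone and the Moore bound constrains the $a_i$ from below — and, for part~(b), the corresponding exact Hausdorff-measure computation on $[0,1]$, where the concavity of $h$ is indispensable for excluding more efficient covers of the equidistributed fractals $N(x)$.
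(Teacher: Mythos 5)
Your part (a) is essentially the paper's own construction: an infinite categorical product $\Gamma_1\times\Gamma_2\times\cdots$ of finite regular graphs, realised on the product space with the ultrametric in which level-$n$ cylinders have diameter $1/\vv(G_n)$, so that the Hausdorff measure with gauge $h$ agrees with the product measure and the mass distribution principle is immediate (any set of diameter $1/\vv(G_n)$ sits inside a single cylinder). The paper reduces to $\alpha=1$ by replacing $h$ with $h/\alpha$ and uses circulants with $t(C_k,\Gamma_i,\delta_i)\le 3/4$ where you use girth $\to\infty$, but these are inessential variations, and your bookkeeping for (ii)--(iii) goes through since $h$ is continuous, positive on $(0,\delta)$ and tends to $0$.

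Part (b) is where the real work lies, and your sketch has a genuine gap in exactly that place. The heart of the matter is the lower bound $\mu(N(x))\ge\alpha$ for the Euclidean Hausdorff measure of the Cantor-type neighbourhoods, and the properties you invoke --- subadditivity of $h$ and monotonicity of $h(x)/x$ --- are strictly too weak to give $\nu_x(I)\le h(|I|)/\alpha$ for every interval. Concretely, let $I$ be the convex hull of $j$ consecutive level-$(n{+}1)$ construction intervals inside a level-$n$ interval of length $\ell_n$, with $2\le j<\delta'_{n+1}$; then $\alpha\nu_x(I)\approx(j/\delta'_{n+1})\,h(\ell_n)$, while $|I|\approx\frac{j-1}{\delta'_{n+1}-1}\ell_n$, and ``$h(x)/x$ nonincreasing'' only yields $h(|I|)\ge\frac{j-1}{\delta'_{n+1}-1}h(\ell_n)$, which for $j=2$ and large $\delta'_{n+1}$ is about \emph{half} of $\alpha\nu_x(I)$. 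What is actually needed is that $h$ dominates its chord between the two \emph{consecutive scales} $\ell_{n+1}$ and $\ell_n$ (that chord, evaluated at $|I|$, equals $\alpha\nu_x(I)$ exactly when the children are evenly spaced with the extreme ones touching the parent's endpoints); this is genuine concavity anchored at $\ell_{n+1}$ rather than at $0$, and it is precisely the inductive interval-merging estimate the paper carries out. A constant-factor lower bound is not enough here, because $\alpha$-regularity requires $\deg_X(x)=\mu(N(x))=\alpha$ exactly.

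Moreover, the even spacing itself is not something you can simply ``arrange'': the addressing is a single global choice, while $N(x)$ varies with $x$, so you need \emph{every} neighbourhood $N_{\Gamma_i}(v)$ to map to an evenly spread set of intervals --- a strong structural constraint (essentially forcing circulants with arithmetic-progression difference sets) that is in tension with the large-girth hypothesis your acyclicity argument uses. Applied literally, your recipe fails: for $\alpha=1$ you take $\Gamma_i=C_{a_i}$, so within each level-$(n{-}1)$ piece the two level-$n$ pieces of $N(x)$ are the intervals indexed $x_n\pm1$, clustered in a window of width $3\ell_n$; covering $N(x)$ by $2^{n-1}$ such windows and taking $h=h_{\mathrm{cube}}$ (which satisfies all hypotheses of (b) and has $h(3\ell_n)\sim h(\ell_n)$) gives $\mu(N(x))\le\lim 2^{n-1}h(3\ell_n)=1/2<1$, so the transported object is not $1$-regular. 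The paper avoids both problems at once by building a single homogeneous Cantor set $S$ with evenly spaced pieces and setting $W(x,y)=\mathbb 1_{|x-y|\in S}$: every $N(x)$ is then a translate of $S$ modulo $1$, reversibility follows from the symmetry of unions of $45^\circ$ lines (Fubini being unavailable for the non-$\sigma$-finite $\mu$), and acyclicity is proved by a digit-carrying argument rather than by girth.
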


\begin{proof}
If $\alpha=0$, let $X=[0,1]$ with the Euclidean metric, and let $W=0$ everywhere. This is a 0-regular acyclic Euclidean graphoning with gauge function $h$.

If $\alpha>0$, then we may, and do, assume that $\alpha=1$, since we may replace $h$ by $h/\alpha$.

\medskip

(a) For $i=1,2,\dots $,  choose positive integers $\gamma_i$ and $\delta_i$ such that $\delta_i$ is even and  $\gamma_i>\delta_i$ for all $i$, $\delta_i$ and $\gamma_i/\delta_i$ both tend to $\infty$ as $i\to\infty$, and $h(1/(\gamma_1\cdots\gamma_n))\sim 1/(\delta_1\cdots \delta_n)$ as $n\to\infty$.
Let $V(\Gamma_i)=\mathbb Z/\gamma_i\mathbb Z$, and join two nodes by an edge if their distance is $\le\delta_i/2$ to get a $\delta_i$-regular graph $\Gamma_i$. For all $k\ge 3$, we have $$\limsup_{i\to\infty} t(C_k,\Gamma_i, \delta_i)\le 3/4,$$ so the 1-regular sequence $(G_n, d_n)$, where $G_n=\Gamma_1\times \cdots\times \Gamma_n$ and $d_n=\delta_1\cdots\delta_n$, has large essential girth. The claim now follows from Example~\ref{limprod}.

\medskip

(b) For $i=1,2,\dots $,  choose  integers $\gamma_i\ge \delta_i>1$ such that $\delta_i-1|\gamma_i-1$ for all $i$, $\delta_i$ and $\gamma_i/\delta_i$ both tend to $\infty$ as $i\to\infty$, and $$|\delta_1\cdots \delta_nh(1/(\gamma_1\cdots\gamma_n))- 1|<1/2^n$$ for all $n$.
Let \begin{equation*}
S=\left\{\sum_{n=1}^\infty \frac {a_n}{\gamma_1\cdots\gamma_n}\; :\; 0\le a_n<\gamma_n,
\; a_n \equiv 0\mod (\gamma_n-1)/(\delta_n-1)\right\}.\end{equation*}
I.e., $S\subset [0,1]$ is the set of numbers which, in the mixed radix system with base $\gamma_1$, $\gamma_2$, \dots, have a representation such that the $n$-th digit is divisible  by $(\gamma_n-1)/(\delta_n-1)$ for all $n$. In other words, $S=\bigcap_{n=0}^\infty S_n$, where $$S_n=\bigcup_{r\in \mathcal R_n
}
I_r,$$ $\mathcal R_n$ is the set of integer sequences  $(r_1,\dots, r_n)$ such that $0\le r_i\le \delta_i-1$ for all $i$, $I_{\emptyset}=[0,1]$, and $I_r$ is a compact interval of length $1/(\gamma_1\cdots \gamma_n)$, 
  such that the two intervals $I_{r_1,\dots, r_{n-1}, 0}$ and $I_{r_1,\dots, r_{n-1},\delta_n-1}$ share a left, resp.\ right endpoint with $I_{r_1,\dots, r_{n-1}}$, and the midpoints of the $\delta_n$ intervals  $I_{r_1,\dots, r_{n-1}, 0}$, \dots,  $I_{r_1,\dots, r_{n-1},\delta_n-1}$ form an arithmetic progression.

 Since each $I_r$ is compact, so is $S_n$, and therefore so is  $S$.

Let $\mu$ be the Hausdorff measure with gauge function $h$. We shall now prove that $\mu (S)=1$.
This is closely related to \cite[Theorem 1]{kinai}. The basic idea is found already in  \cite[pp.\ 14--15]{Fa}.

 We have $S\subset S_n=\bigcup I_r,$ where \begin{equation*}\sum_{r
 } h(\diam I_r)=\delta_1\cdots\delta_n\cdot h(1/(\gamma_1\cdots\gamma_n))\to1\end{equation*} and $$\max_r\diam I_r=1/(\gamma_1\cdots\gamma_n)\to 0$$ as $n\to\infty$, whence $\mu(S)\le 1$.

For the converse inequality, assume that $S\subseteq\bigcup_{J\in\mathcal J} J$, where $\mathcal J$  is countable and each $\diam J$ is smaller than the length of a shortest component of $[0,1]-S_n$ for a  given $n$. We show that $$\sum_{J\in\mathcal J} h(\diam J)> 1-\frac1{2^{n-2}}.$$ 
We may assume (by taking convex hull and fattening a bit) that the sets $J$ are open intervals with endpoints not in $S$. By compactness, we may assume that there are only finitely many of them. Now  we may change our mind and assume (by cutting off superfluous bits) that each $J$ is  the convex hull of two intervals $I_r$,  where $r\in R_N$ with a fixed $N$, while each $J$ is contained in some $I_r$ with $r\in R_n$. We may also assume that the intervals $J\in\mathcal J$
are pairwise disjoint.

Let $\mathcal J'$ be the set of nonempty intervals arising by intersecting each $J\in\mathcal J$ with each connected component of $S_{n+1}$.

It suffices to show that $$\sum_{J'\in\mathcal J'} h(\diam J')-\sum _{J\in\mathcal J}h(\diam J)\le \sum_{r'\in\mathcal R_{n+1}} h(\diam I_{r'})-\sum _{r\in \mathcal R_n}h(\diam I_r),$$  because the right hand side is $$\delta_1\cdots \delta_n\delta_{n+1}h(1/(\gamma_1\cdots\gamma_n\gamma_{n+1}))-\delta_1\cdots \delta_nh(1/(\gamma_1\cdots\gamma_n))<3/2^{n+1}.$$

  Let $r\in \mathcal  R_n$ be fixed. It suffices to show that
   $$\sum_{J'\in\mathcal J', J'\subset I_r} h(\diam J')-\sum _{J\in\mathcal J, J\subseteq I_r}h(\diam J)\le \sum_{r_{n+1}=0}^{\delta_{n+1}-1} h(\diam I_{r, r_{n+1}})-h(\diam I_r),$$
  because summation upon $r$ gives our previous claim. The last inequality follows from the concavity of $h$. Indeed, if an in interval $J\in\mathcal J$ with  $J\subseteq I_r$ contains exactly $s$ of the $\delta_{n+1}-1$ connected components of $I_r\setminus S_{n+1}$, then
  $$\sum_{J'\in\mathcal J', J'\subseteq J} h(\diam J')-h(\diam J)\le\frac s{\delta_{n+1}-1}\left( \sum_{r_{n+1}=0}^{\delta_{n+1}-1} h(\diam I_{r, r_{n+1}})-h(\diam I_r)\right),$$
  and summation w.r.t.\ $J$ yields our previous inequality.
    This proves that $\mu(S)=1$.

For $x,y\in [0,1]$, put $W(x,y)=1$ if $|x-y|\in S$ and $W(x,y)=0$ otherwise. Let $\lambda $ be Lebesgue measure on $\mathcal B=\mathcal B[0,1]$.
We must prove that the tuple $$\mathbf G=([0,1], \mathcal B, \la,\mu, W)$$ is  a 1-regular Euclidean graphoning with gauge function $h$.
 Firstly, the function $W$ is semicontinuous and therefore Borel measurable. We have $\deg_{[0,1]}(x)=1$ for all $x\in [0,1]$. When $A$ is an interval, the function $\deg_A$ is continuous and therefore Borel measurable.
When $A$ is an open set, the function $\deg _A$ is  still Borel measurable because $A$ is a  countable disjoint union of intervals, thus $\deg_ A$ is Baire 1 (i.e., a pointwise limit of continuous functions).  The class of Borel subsets $A$ of $[0,1]$ such that $\deg_A$ is Borel measurable is closed under monotone sequential limits and contains all open sets, therefore contains all Borel sets, cf.\ \cite[Section II.6]{D}.

It remains to check the measure preserving property~\eqref{measurepres}. Observe that $W$ is the indicator of a set that is a union of lines with slope $45^\circ$ intersected with the unit square. Any union of such lines  is symmetric w.r.t.\ any line of slope $-45^\circ$. We deduce \eqref{measurepres} for intervals $A,B\subseteq [0,1]$ of equal length. Since any rectangle can be exhausted by squares, \eqref{measurepres} holds for any intervals $A$ and  $B$ by the Monotone Convergence Theorem.
For a  fixed interval $A$, both sides of \eqref{measurepres}, as functions of the Borel set $B$, are finite measures that coincide on intervals, so coincide on all Borel sets. For a fixed Borel set $B$, the two sides coincide on all intervals $A$, so on all Borel sets $A$.
This proves that $\mathbf G$ is indeed a 1-regular Euclidean graphoning with gauge function $h$.

We show that $\mathbf G$ is acyclic.
 Since $S$ is symmetric w.r.t.\ 1/2, this amounts to saying that for any $k\ge 2$, the modulo 1 sum of $k$ independent $\mu$-random elements of $S$
is a.s.\ not in $S$. But $\mu$-random means that for each $n$, we choose a  value from $\{0, 1,\dots, \delta_n-1\}$ uniformly and multiply it by $(\gamma_n-1)/(\delta_n-1)$  to get the $n$-th digit in the mixed radix expansion; and we do this independently for all $n$. There will a.s.\ be infinitely many indices $n$ such that there is  carrying from the $n$-th digit to  the previous digit when we perform the $k$-fold addition, but  there is no carrying from the $(n+1)$-th digit to the $n$-th. If such an $n$ is large enough, then in
 the $k$-fold modulo 1 sum the $n$-th digit $a_n$ is not divisible by the corresponding $(\gamma_n-1)/(\delta_n-1)$.
\end{proof}

\begin{Th}\label{reggirthlim} If  $0\le \alpha\le 1$, and $(G_n, d_n)$ is an $\alpha$-regular sequence of large essential girth,
such that $\vv(G_n)<\vv(G_{n+1})$ and $d_n\le d_{n+1}$ for all $n$, then
\begin{itemize}
\item[(a)] $(G_n,d_n)$ has a Hausdorff limit such that $W$ is $\{0,1\}$-valued.
\item[(b)] If, in addition, the function $1/\vv(G_n)\mapsto 1/d_n$ is concave, then $( G_n,d_n)$
has a Euclidean limit  such that $W$ is $\{0,1\}$-valued.
\end{itemize}
\end{Th}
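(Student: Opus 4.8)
The plan is to produce the limit by invoking Lemma~\ref{constr} with a suitably chosen gauge function and then verifying that the resulting graphoning has the correct homomorphism densities; the monotonicity hypotheses on $(G_n,d_n)$ serve precisely to make such a gauge function available. First note that $\vv(G_n)\to\infty$, since the $\vv(G_n)$ form a strictly increasing sequence of positive integers. The case $\alpha=0$ is easy: by Proposition~\ref{alphareg}, $t(K_2,G_n,d_n)\to0$, so $t(F,G_n,d_n)\to0$ for every connected $F$ with an edge by the monotonicity of Remark~\ref{subgraph}, while $t(K_1)=1$; hence the graphoning $([0,1],\mathcal B,\la,\mu,0)$ with $W\equiv0$, $\la$ Lebesgue $=1$-dimensional Hausdorff measure, and $\mu$ the Hausdorff measure of any continuous nondecreasing gauge $h$ satisfying $h(1/\vv(G_n))\sim1/d_n$ (take the piecewise-linear interpolant through $(1/\vv(G_n),1/d_n)$, or $h\equiv1/d$ if $d_n$ stabilizes to $d$) is a $\{0,1\}$-valued Euclidean, hence Hausdorff, limit. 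If $\alpha>0$ and $(d_n)$ is bounded, then it is eventually constant, equal to some $d$, the desired limit is a graphing with constant gauge $h\equiv1/d=1/d_n$: the sequence is Benjamini--Schramm convergent, its limit is a.s.\ acyclic by large essential girth, and such limits admit graphing representations by the classical results of Aldous--Lyons and Elek (\cite{AL,E}, cf.\ \cite[Theorem~18.37]{L}). So from now on I assume $\alpha>0$ and $d_n\to\infty$.

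The key step is the construction of the gauge function, and the only place where large essential girth is really used. Since $t(K_2,G_n,d_n)=2\e(G_n)/(\vv(G_n)d_n)\to\alpha>0$ and $\e(G_n)<\vv^2(G_n)/2$, we have $d_n=O(\vv(G_n))$, so Proposition~\ref{notdense} gives $\frac{d_n}{\vv(G_n)}t(K_2,G_n,d_n)\to0$ and therefore $\vv(G_n)/d_n\to\infty$. Put $x_n=1/\vv(G_n)$ and $y_n=1/d_n$, so that $x_n\downarrow0$ strictly, $y_n\downarrow0$, and $y_n/x_n=\vv(G_n)/d_n\to\infty$; let $h$ be the piecewise-linear interpolant through $(0,0)$ and all the points $(x_n,y_n)$. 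Then $h$ is continuous and nondecreasing with $h(0)=0$ and $h(1/\vv(G_n))=1/d_n$, and $h(x)/x\to\infty$ as $x\to0$: on each segment $[x_{n+1},x_n]$ the function $h$ is affine, so $x\mapsto h(x)/x$ is monotone there, its minimum over the segment is attained at an endpoint, and is hence at least $\min(y_{n+1}/x_{n+1},\,y_n/x_n)\to\infty$. If moreover $1/\vv(G_n)\mapsto1/d_n$ is concave, then so is $h$, since adjoining $(0,0)$ preserves concavity for a nondecreasing function with limit $0$ at $0$.

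Now Lemma~\ref{constr}(a) provides, for this gauge $h$, an $\alpha$-regular acyclic $\{0,1\}$-valued Hausdorff graphoning $\mathbf G$ with gauge function $h$, and by Lemma~\ref{constr}(b) it may be taken Euclidean when $h$ is concave. It remains to check that $\mathbf G$ is a limit of $(G_n,d_n)$. By Proposition~\ref{girthdensity}(a), $t(F,G_n,d_n)\to\alpha^{\e(F)}$ for every tree $F$ and $t(F,G_n,d_n)\to0$ for every other connected $F$. On the other hand, the degree function of $\mathbf G$ is a reversible sub-Markov kernel, so Proposition~\ref{alphareggraphoning} turns the $\alpha$-regularity of $\mathbf G$ into $t(F,\mathbf G)=\alpha^{\e(F)}$ for all trees $F$ (tree densities being independent of $W$); and since $\mathbf G$ is acyclic, $t(C_k,\mathbf G)=0$ for all $k\ge3$, whence $t(F,\mathbf G)=0$ for every connected non-tree $F$ by the graphoning analogue of Remark~\ref{subgraph}: fixing a cyclic subgraph $C_k\subseteq F$ and integrating out, in decreasing label order, the vertices of $F$ outside $C_k$ contributes only factors $\le1$ to the defining iterated integral, so $t(F,\mathbf G)\le t(C_k,\mathbf G)=0$. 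Hence $t(F,G_n,d_n)\to t(F,\mathbf G)$ for all connected $F$, i.e.\ $(G_n,d_n)\to\mathbf G$; and since the gauge $h$ of $\mathbf G$ satisfies $h(1/\vv(G_n))=1/d_n$, $\mathbf G$ is a Hausdorff (resp.\ Euclidean) limit, as required. I expect the main difficulty to be realizing all the properties of $h$ simultaneously --- especially reconciling its monotonicity with $h(x)/x\to\infty$, which forces the input $\vv(G_n)/d_n\to\infty$ and hence the use of large essential girth through Proposition~\ref{notdense}; the substantial geometric-measure-theoretic work is already done inside Lemma~\ref{constr}.
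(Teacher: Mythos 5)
Your proposal is correct and follows essentially the same route as the paper: piecewise-linear gauge through the points $(1/\vv(G_n),1/d_n)$, the degenerate cases $\alpha=0$ and bounded $d_n$ handled separately, Proposition~\ref{notdense} yielding $\vv(G_n)/d_n\to\infty$ and hence $h(x)/x\to\infty$, and then Lemma~\ref{constr}. Your extra verification that the graphoning supplied by Lemma~\ref{constr} really has the limiting homomorphism densities (via Propositions~\ref{girthdensity} and \ref{alphareggraphoning} and acyclicity) is a detail the paper leaves implicit, not a deviation.
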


\begin{proof}
Let $h(1/\vv(G_n))=1/d_n$, and let $h$ be linear on each of the intervals $$\left[1/\vv(G_{n+1}), 1/\vv(G_n)\right].$$ Put $h(0)=\lim (1/d_n)$  to make $h$ right-continuous. If $h(0)>0$, then $d_n$ stabilizes to a value $d$. Then $\alpha d$ must be an integer, and $G_n$ is Benjamini--Schramm convergent to the $\alpha d$-regular tree, which can be represented by a graphing on $[0,1]$.  From now on, we assume that $d_n\to\infty$, i.e., $h(0)=0$.

If $\alpha=0$, let $X=[0,1]$ with the Euclidean metric, and let $W=0$ everywhere. This is a Euclidean limit for $(G_n,d_n)$.

If $\alpha>0$, then, by Proposition~\ref{notdense}, we have $\vv(G_n)/d_n\to\infty$ as $n\to\infty$, and therefore $h(x)/x\to\infty$ as $x\to 0$.
The Theorem now follows from Lemma~\ref{constr}.
 \end{proof}

The rest of this subsection is not logically necessary, it is only to illustrate Lemma \ref{constr} and Theorem~\ref{reggirthlim}.
We work out two examples: we explicitly construct  Euclidean  limits of the sequence of hypercubes and the sequence of projective planes. Let $\mu_{\textrm{cube}}$ and $\mu_{\textrm{proj}}$ be the Hausdorff measures on $[0,1]$ corresponding to the gauge functions $$h_{\textrm{cube}}(x)=1/
\log_2 (1/x)
\qquad \textrm{and}\qquad h_{\textrm{proj}}=\sqrt{2x},$$ respectively. Note  that these gauge functions have the right growth rate: \begin{equation}\label{growth}h_{\mathrm{cube}}(1/\vv(Q_d))=1/d \qquad \mathrm{and} \qquad h_{\mathrm{proj}}(1/\vv(G))\sim 1/(q+1)\end{equation} if $G$ is the incidence graph of a  projective plane of order $q\to\infty$.
Observe also that $h_{\textrm{cube}}$
is concave on $[0,1/e]$ and $h_{\textrm{proj}}$
is concave on $[0,+\infty)$. This will help us to calculate the Hausdorff measures of carefully constructed sets.
The following construction relies on  a rather special property of these two functions $h$:  the numbers $1/h^{-1}(1/2)$ and $h^{-1}(1/2^n)/h^{-1}\left(1/2^{n+1}\right)$ $(n=1,2,\dots)$ are integral powers of $ 2$.
Thus, we can get away with binary expansions instead of the mixed radix expansions above, and the inequalities involved in the proof of Lemma~\ref{constr}(b) become much simpler.

Let \begin{equation}\label{Scube}
S_{\textrm{cube}}=\left\{\sum_{j=1}^\infty a_j2^{-j}:a_j\in\{0,1\},
a_1=a_2, a_3=a_4, a_5=\dots=a_8, a_9=\dots=a_{16}, \dots\right\}\end{equation}
and
\begin{equation}\label{Sproj}
S_{\textrm{proj}}=\left\{\sum_{j=1}^\infty a_j2^{-j}:a_j\in\{0,1\},
a_1=a_2= a_3,a_4= a_5, a_6= a_7, a_8=a_9,\dots\right\}.\end{equation}
\begin{Pro}\label{Hausdorff}\begin{itemize}
\item[(a)] The sets $S_{\mathrm{cube}}$ and  $S_{\mathrm{proj}}$ are  compact.

\item[(b)] $\mu_{\mathrm{cube}}(S_{\mathrm{cube}})=\mu_{\mathrm{proj}}(S_{\mathrm{proj}})=1$.\end{itemize}
\end{Pro}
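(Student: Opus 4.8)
Write $S$ for either $S_{\mathrm{cube}}$ or $S_{\mathrm{proj}}$, and $h$ for the matching gauge function. Group the binary digit positions into consecutive blocks: of sizes $2,2,4,8,16,\dots$ for $S_{\mathrm{cube}}$ (so the first $n$ blocks occupy positions $1,\dots,2^n$), and of sizes $3,2,2,2,\dots$ for $S_{\mathrm{proj}}$ (positions $1,\dots,2n+1$). By \eqref{Scube} and \eqref{Sproj}, a real lies in $S$ exactly when it admits a binary expansion that is constant on each block. For $n\ge 0$ let $S_n$ be the set of reals having a binary expansion constant on each of the first $n$ blocks; then $S_n$ is a union of $2^n$ pairwise disjoint closed dyadic intervals (``cylinders''), each of length $h^{-1}(2^{-n})$ --- namely $2^{-2^n}$ for $S_{\mathrm{cube}}$ and $2^{-(2n+1)}$ for $S_{\mathrm{proj}}$ --- so each $S_n$ is compact and $S_{n+1}\subseteq S_n$. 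Since a real has at most two binary expansions, a short pigeonhole argument gives $S=\bigcap_n S_n$, so $S$ is compact, proving (a). (Equivalently, $S$ is the image of the compact space $\{0,1\}^{\mathbb N}$ under the continuous map sending $(\varepsilon_k)_k$ to the real whose $k$-th block of digits is constantly $\varepsilon_k$.)

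For (b), the inequality $\mu(S)\le 1$ is read off from the cylinder covers: $S\subseteq S_n$, the $2^n$ cylinders comprising $S_n$ have diameters $h^{-1}(2^{-n})\to 0$, and --- this is precisely the reason for the choice of block sizes ---
$$\sum_{I\text{ a cylinder of }S_n}h(\diam I)=2^n\cdot h\bigl(h^{-1}(2^{-n})\bigr)=2^n\cdot 2^{-n}=1\qquad\text{for every }n.$$

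The reverse inequality $\mu(S)\ge 1$ is the heart of the matter, and it follows the argument in the proof of Lemma~\ref{constr}(b), which here simplifies considerably: because the displayed cylinder weight equals $1$ for \emph{every} level, the error terms occurring there telescope to $0$. In detail, let $\mathcal J$ be any finite cover of $S$ by intervals each shorter than the shortest component of $[0,1]\setminus S_n$ (so $n$ is large). By convexifying, enlarging slightly, using compactness and disjointness to discard overlaps, and trimming to cylinders, we may assume the members of $\mathcal J$ are pairwise disjoint, each contained in a single cylinder of $S_n$, and each equal to the convex hull of two cylinders of $S_N$ for a fixed large $N$. Refining $\mathcal J$ one level at a time --- replacing each $J$ by its nonempty intersections with the components of $S_{m+1}$ --- concavity of $h$ (all relevant diameters are $\le 1/4$, hence in $[0,1/e]$, in the cube case, while $h_{\mathrm{proj}}$ is concave on $[0,\infty)$) yields, level by level and cylinder by cylinder,
$$\sum_{J'}h(\diam J')-\sum_{J}h(\diam J)\ \le\ \sum_{I\text{ cyl.\ of }S_{m+1}}h(\diam I)-\sum_{I\text{ cyl.\ of }S_{m}}h(\diam I)\ =\ 1-1\ =\ 0.$$
Iterating from level $n$ up to level $N$, the fully refined cover is exactly the family of all cylinders of $S_N$, of total weight $1$, so $\sum_{J\in\mathcal J}h(\diam J)\ge 1$. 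As this holds for all sufficiently fine covers, $\mu(S)\ge 1$, and together with the upper bound this gives $\mu(S)=1$, as claimed for both $(S_{\mathrm{cube}},\mu_{\mathrm{cube}})$ and $(S_{\mathrm{proj}},\mu_{\mathrm{proj}})$.

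The only genuinely delicate points are the rounding of an arbitrary cover to one subordinate to the cylinder structure and the cylinder-by-cylinder concavity estimate above; but both are simply the binary-radix specialization of the mixed-radix argument carried out in full in the proof of Lemma~\ref{constr}(b), with the gap components of $S_{n+1}$ lying inside a given cylinder of $S_n$ playing the same role as there.
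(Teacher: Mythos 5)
Your proof is correct. Part (a) and the upper bound $\mu(S)\le 1$ coincide with the paper's argument. For the lower bound you take the route that the paper only mentions in passing for this Proposition (``a proof can be extracted from that of Lemma~\ref{constr}(b)'') rather than the one it writes out: you normalize a fine cover to a disjoint family of hulls of level-$N$ cylinders, refine it level by level by intersecting with the components of $S_{m+1}$, and use concavity of $h$ together with the exact identity $h(\diam I_{i,0})+h(\diam I_{i,1})=h(\diam I_i)$ to see that no refinement step increases the total weight, so the original weight dominates that of the full family of level-$N$ cylinders, which is $1$. The paper's written proof instead fixes a single level $n$ and proves the per-interval superadditivity $h(\diam J)\ge\sum_{I_i\subseteq J}h(\diam I_i)$ by a downward induction on $n$ that extends $J$ at each end to a sibling-complete hull, each extension costing at most $2^{-n}$ by concavity while gaining exactly $2^{-n}$; summing over $J$ then needs neither disjointness of the cover nor the single-parent-cylinder reduction. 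Your version buys two small simplifications: the error terms of Lemma~\ref{constr}(b) telescope to exactly zero because every level has total weight $1$, and since every interval stays inside a level-$\ge 1$ cylinder all relevant diameters are $\le 1/4<1/e$, so the exceptional case $\diam J'>1/e$ that the paper must handle for $h_{\mathrm{cube}}$ never arises. Both arguments ultimately rest on the same two facts: concavity of the gauge function and $\sum_{i\in\{0,1\}^n}h(\diam I_i)=1$ for every $n$.
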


For the `proj' case, this is well known \cite[page 15]{Fa};
it is also  a special case of \cite[Theorem 1]{kinai}. The `cube' case can be proved by the same technique, or a proof can be extracted from that of Lemma~\ref{constr}(b). We include a proof for the convenience of the reader.

\begin{proof} In both cases, we have $S=\bigcap_{n=0}^\infty S_n$, where $$S_n=\bigcup_{i\in\{0,1\}^n}
I_i,$$ and $I_i$ is a compact interval with $h(\diam I_i)=1/2^n$ for each $i\in\{0,1\}^n$, such that $I_\emptyset=[0,1]$, and  $I_{i,0}$ and $I_{i,1}$ share a left, resp.\ right endpoint with $I_i$.

(a) Since each $I_i$ is compact, so is $S_n$, and therefore $S$.

(b) We have $S\subset S_n=\bigcup I_i,$ where \begin{equation}\label{h}\sum_{i\in\{0,1\}^n} h(\diam I_i)=2^n\cdot (1/2^n)=1\end{equation} and $\max_i\diam I_i=h^{-1}(1/2^n)\to 0$ as $n\to\infty$, whence $\mu(S)\le 1$.

For the converse inequality, assume that $S\subseteq\bigcup_{J\in\mathcal J} J$, where $\mathcal J$  is countable. We show that $\sum h(\diam J)\ge 1$. We may assume (by taking convex hull and fattening a bit) that the sets $J$ are open intervals with endpoints not in $S$. By compactness, we may assume that there are only finitely many of them. Now  we may change our mind and assume (by cutting off superfluous bits) that each $J$ is  the convex hull of two intervals $I_i$,  where $i\in\{0,1\}^n$ with a fixed $n$.

In view of \eqref{h}, it suffices to show that \begin{equation}\label{superadd}h(\diam J)\ge\sum_{i\in\{0,1\}^n, I_i\subseteq J}h(\diam I_i)=\frac1{2^n}\sum_{i\in\{0,1\}^n, I_i\subseteq J}1\end{equation} for any such $J$. We use induction on $n$. For $n=1$, we have $J=I_0$ or $J=I_1$, when \eqref{superadd} holds with equality, or $J=[0,1]$, when it holds with strict inequality. Let $n\ge 2$ and assume that \eqref{superadd} holds for $n-1$ in place of $n$, whenever $J$ is the convex hull of two intervals $I_i$, $i\in\{0,1\}^{n-1}$. Let us prove the same for $n$.

If the rightmost $I_i$ contained in $J$ has an index $i\in\{0,1\}^n$ that ends on 0, then let $i'$ be the same $i$ with the last digit modified to 1. Let $J'$ be the convex hull of $J$ and $I_{i'}$. By concavity of $h$, we have $$h(\diam J')-h(\diam J)\le 1/2^n,$$ except, in the hypercube case, if $\diam J'>1/e$, but then we have $\diam J>1/2$ and $h(\diam J)>1$ trivially.

Thus, it suffices to prove \eqref{superadd} for $J'$ in place of $J$. We may perform a similar trick at the left end of $J$. After all, we may assume that that the leftmost interval $I_i$ in $J$ has $i$ ending on 0 and the rightmost one has $i$ ending on 1. But then $J$ is the convex hull of two intervals $I_i$ with $i\in\{0,1\}^{n-1}$ and we are done by the induction hypothesis.
\end{proof}

We continue to treat the hypercube and the projective plane simultaneously. We omit the subscripts cube and proj.
As in the proof of Lemma~\ref{constr}(b), we use $S$ to construct a graphoning.
For $x,y\in [0,1]$, put $W(x,y)=1$ if $|x-y|\in S$ and $W(x,y)=0$ otherwise. Let $\lambda $ be Lebesgue measure on $\mathcal B=\mathcal B[0,1]$.
The tuple $\mathbf G=([0,1], \mathcal B, \la,\mu, W)$ is  an acyclic 1-regular Euclidean graphoning with gauge function $h$.
Acyclicity
means that for any $k\ge 2$, the modulo 1 sum of $k$ independent $\mu$-random elements of $S$
is a.s.\ not in $S$. Here $\mu$-random means that for each block of binary digits in \eqref{Scube} or \eqref{Sproj}, we choose the common value 0 or 1 with equal probability, and we do this independently for all blocks. There will a.s.\ be a 
block where we choose  1  exactly twice for the common value, but for 
 the following $k$ blocks, we choose 0 all $k^2$ times. 
In the $k$-fold modulo 1 sum this block will not consist of equal digits.

\begin{Pro}\begin{itemize}
    \item[(a)]
$(Q_d,d)\to \mathbf G_{\mathrm{cube}}$ as a Euclidean limit as $d\to\infty$.
\item[(b)] If $G_n$ is the incidence graph of a projective plane of order $q_n$, and $q_n\to\infty$, then $$(G_n,q_n+1)\to \mathbf G_{\mathrm{proj}}$$ as a  Euclidean  limit as $n\to\infty$.
\end{itemize}
\end{Pro}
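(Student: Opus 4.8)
The plan is to treat the two cases in parallel, writing $\mathbf G$ for $\mathbf G_{\mathrm{cube}}$ (resp.\ $\mathbf G_{\mathrm{proj}}$) and $(\Gamma_n,d_n)$ for $(Q_d,d)$ (resp.\ $(G_n,q_n+1)$). The whole point is that both sides have the \emph{same, very rigid} homomorphism profile: every tree has density $1$ and every other connected graph has density $0$. Once this is checked on each side separately, $t(F,\Gamma_n,d_n)\to t(F,\mathbf G)$ holds for every connected $F$, so $\mathbf G$ is a limit; it is then a \emph{Euclidean} limit thanks to the growth rate~\eqref{growth} of the gauge function.

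On the graph side: in both cases $\Gamma_n$ is $d_n$-regular, so $(\Gamma_n,d_n)$ is $1$-regular, and it has large essential girth --- for the hypercubes by the remark following Proposition~\ref{cubeproj}, for the incidence graphs of projective planes by Proposition~\ref{cubeproj}(a) with $r_n=2$. Since moreover $d_n\to\infty$, Proposition~\ref{girthdensity} applies and gives at once that $(\Gamma_n,d_n)$ is convergent and that $t(F,\Gamma_n,d_n)\to 1$ for every tree $F$ while $t(F,\Gamma_n,d_n)\to 0$ for every other connected $F$. On the graphoning side: as recorded in the discussion preceding the Proposition, $\mathbf G$ is an acyclic, $1$-regular Euclidean graphoning with gauge function $h_{\mathrm{cube}}$ (resp.\ $h_{\mathrm{proj}}$). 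Being $1$-regular, it satisfies $t(F,\mathbf G)=1$ for every tree $F$ by Proposition~\ref{alphareggraphoning}(c). For a connected $F$ that is not a tree I would invoke the graphoning analogue of Remark~\ref{subgraph}: removing from $F$ an edge whose deletion keeps $F$ connected only deletes a factor $W\le 1$ from the integrand of the rooted density (pick a spanning tree avoiding that edge), and removing a degree-$1$ vertex only inserts a factor $\deg_X\le 1$ (pick an admissible labeling giving that vertex the top label, and use root-independence, Proposition~\ref{unrootedwelldef}(a)); so $t(\cdot,\mathbf G)$ is non-decreasing under either move. Iterating these moves one reduces $F$ to a single cycle $C_m$ with $m\ge 3$, whence $t(F,\mathbf G)\le t(C_m,\mathbf G)=0$ by acyclicity, so $t(F,\mathbf G)=0$.

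Combining the two sides, $t(F,\Gamma_n,d_n)\to t(F,\mathbf G)$ for every connected $F$, so $\mathbf G$ is a limit of $(\Gamma_n,d_n)$; since $\mathbf G$ is a Euclidean graphoning and, by~\eqref{growth}, $h_{\mathrm{cube}}(1/\vv(Q_d))=1/d$ (resp.\ $h_{\mathrm{proj}}(1/\vv(G_n))\sim 1/(q_n+1)$), i.e.\ $h(1/\vv(\Gamma_n))\sim 1/d_n$, it is a Euclidean limit, as claimed. The only step that is not a direct citation --- and the one I would actually write out --- is the monotonicity of graphoning homomorphism densities under passing to connected subgraphs, because the paper states such monotonicity only for the discrete densities $t(F,G,d)$; I expect this to be routine, using exactly the $W\le 1$ and $\deg_X\le 1$ bounds together with the Fubini-freeness already built into Definition~\ref{rootedhomomdef}, but it must be spelled out with a careful choice of admissible labelings at each reduction step.
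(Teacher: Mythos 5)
Your proof is correct and follows essentially the same route as the paper's: graph-side densities via Propositions~\ref{cubeproj} and~\ref{girthdensity}, graphoning-side tree densities via $1$-regularity (Propositions~\ref{alphareggraphoning} and~\ref{Hausdorff}), vanishing of cyclic densities via acyclicity, and the Euclidean-limit property via~\eqref{growth}. The one step you flag for elaboration --- monotonicity of graphoning densities under passing to connected subgraphs, needed to go from $t(C_m,\mathbf G)=0$ to $t(F,\mathbf G)=0$ for every connected non-tree $F$ --- is indeed left implicit in the paper (which simply asserts $t(F,\mathbf G)=0$ whenever $F$ contains a cycle), and your sketch of it, using the bounds $W\le 1$ and $\deg_X\le 1$ together with Propositions~\ref{rootedwelldef}(b) and~\ref{unrootedwelldef}(a) to choose convenient labelings, is the right way to fill it in.
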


\begin{proof}
From Propositions~~\ref{alphareggraphoning} and \ref{Hausdorff}, 
we have $t(F,\mathbf G)=1$ for any tree $F$.
Since $\mathbf G$ is acyclic, we have  $t(F,\mathbf G)=0$ if $F$ contains  a cycle.
By Propositions~\ref{cubeproj} and \ref{girthdensity}, the convergence claims in the Proposition hold. By  \eqref{growth}, the limiting graphonings are Hausdorff, and therefore, Euclidean  limits.
\end{proof}


\section{Sub-Markov spaces} In the previous subsection, we  dealt with  regular sequences of large essential girth. In this section, 
we shall  
construct limit objects for arbitrary sequences of large essential girth. Sadly, these limit objects will not be graphonings, they will be weaker structures: probability spaces with a reversible sub-Markov kernel --- in other words,  pseudo-graphonings with $W=0$.
\subsection{Tree densities and kernel preserving maps}
 In this subsection, we do the easy part of the preparations. 



We only need to care about tree densities. Recall again that the rooted tree densities, as in  Definition~\ref{rootedhomomdef},   depend only on the sub-Markov kernel $\deg$, not  on the function $W$ --- cf. Proposition~\ref{rootedwelldef}(c) --- or on the probability measure $\la$.  They  satisfy a  simple recursion whose proof is trivial from Definition~\ref{rootedhomomdef}:
\begin{Lemma}\label{rootedtree} Let $\deg$ be a sub-Markov kernel on $(X,\mathcal B)$, and let $x_0\in X$. Let $(F,o)$ be  a rooted tree. We have
$$t((F,o), (X, x_0))=\prod_{i=1}^k \int_X t((F_i,o_i),(X, x_i))\mathrm d\deg (x_0)(x_i)$$ if $F-o$ is the disjoint union of trees $F_1$, \dots, $F_k$ whose nodes adjacent to $o$ in $F$ are $o_1$, \dots, $o_k$ respectively.
\end{Lemma}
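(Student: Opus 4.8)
The plan is to unwind Definition~\ref{rootedhomomdef} directly; this is essentially bookkeeping. Since $F$ is a tree, the spanning tree $T$ appearing there is $F$ itself, so the product $\prod_{kl\in E(F)-E(T)}W(x_k,x_l)$ over non-tree edges is empty (equal to $1$), and $t((F,o),(X,x_0))$ collapses to a pure iterated integral of the constant function $1$ against the kernel $\deg$, with one integration per non-root vertex of $F$. If $k=0$, i.e.\ $F$ consists of the single node $o$, then there are no integrations and both sides equal the empty product $1$, so I may assume $k\ge 1$. By Proposition~\ref{rootedwelldef}(b) the value is independent of the admissible labeling and of the parent function $j$, so I am free to choose them so that the iterated integral manifestly factors.

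First I would construct the labeling blockwise. Give $o$ the label $0$. Using, for each $i$, an admissible labeling of the rooted tree $(F_i,o_i)$ that sends $o_i$ to $0$ (such exists --- build it greedily, cf.\ Lemma~\ref{conn}), place the vertices of $F_1$ on the labels $1,\dots,\vv(F_1)$ in that order, those of $F_2$ on the next $\vv(F_2)$ labels, and so on, so that each $o_i$ receives the smallest label of its block. This concatenated labeling of $V(F)$ is admissible: once $o$ has been placed, inserting the block of $F_i$ in its own admissible order keeps the spanned subgraph connected, because the first vertex of that block to be inserted, namely $o_i$, is adjacent to $o$, which already carries a smaller label. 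For the parent function I take, for a non-root vertex $v$ of $F_i$, the parent prescribed by the chosen admissible labeling of $F_i$ (which lies in the same block), and $j(o_i)=0$, the label of $o$.

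The main step is then to read off the iterated integral block by block. Since the blocks occupy disjoint intervals of labels, the nesting of integrals decomposes into the nestings coming from the individual blocks, performed one block at a time; and within the block of $F_i$ every vertex has its $j$-parent either inside the block or equal to the root $x_0$. Hence integrating out that block produces a function of $x_0$ alone, and by construction the inner integrations reconstitute $t((F_i,o_i),(X,x_i))$ --- this is exactly Definition~\ref{rootedhomomdef} applied to $(F_i,o_i)$ with root-variable $x_i$, using the restriction of our labeling and of $j$ to the block (offset removed), which is legitimate by Proposition~\ref{rootedwelldef}(a) and (b) --- while the final integration in the block integrates $x_i$ against $\deg(x_0)$. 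So the block of $F_i$ contributes the factor $\int_X t((F_i,o_i),(X,x_i))\,\mathrm d\deg(x_0)(x_i)$, which is a constant for the integrations belonging to the other blocks and therefore pulls out. Doing this for $i=1,\dots,k$ leaves precisely $\prod_{i=1}^{k}\int_X t((F_i,o_i),(X,x_i))\,\mathrm d\deg(x_0)(x_i)$, as claimed.

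I do not expect a genuine obstacle here: no Fubini theorem is needed, because the blocks do not interleave and each block couples only to the root, so the regrouping is merely the iterated structure of integration against a sub-Markov kernel. The only points that need a careful word are the verification that the concatenated labeling is admissible, the degenerate case $k=0$, and the remark that each factor is a well-defined measurable function of $x_0$ --- which follows from Proposition~\ref{rootedwelldef}(a) together with Corollary~\ref{degf} (or Lemma~\ref{stronger}).
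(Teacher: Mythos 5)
Your proof is correct and is precisely the argument the paper has in mind: the paper omits the proof, declaring it ``trivial from Definition~\ref{rootedhomomdef}'', and your blockwise admissible labeling with the empty product over $E(F)-E(T)$ is exactly that unwinding. The careful points you flag (admissibility of the concatenated labeling, the $k=0$ case, measurability of each factor via Proposition~\ref{rootedwelldef}(a)) are the right ones, and no Fubini-type interchange is needed since the blocks are nested, not interleaved.
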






\begin{Def} Let $(X,\mathcal A, \deg)$ and $(Y,\mathcal B, \deg)$ be spaces with sub-Markov kernels (by abuse of notation, both kernels are denoted by deg). A measurable map $\phi:X\to\ Y$ is \it kernel preserving \rm if $\phi_* (\deg (x))=\deg (\phi (x))$ for all $x\in X$.
\end{Def}

\begin{Pro}\label{tpreserve}
If $\phi:X\to Y$ is kernel preserving, then  $$t((F,o), (X,x_0))=t((F,o), (Y, \phi(x_0)))$$ for all rooted trees $(F,o)$ and all $x_0\in X$.
\end{Pro}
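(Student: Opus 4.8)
The plan is to prove this by induction on the number of vertices $\vv(F)$, using the recursion for rooted tree densities from Lemma~\ref{rootedtree} together with the change-of-variables formula for pushforward measures. The base case $\vv(F)=1$ is immediate, since $t((K_1,o),(X,x_0))=1$ by definition (or directly from Definition~\ref{rootedhomomdef}, where the product and the chain of integrations are both empty), and the same holds on $Y$.

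For the inductive step, suppose $\vv(F)\ge 2$ and write $F-o$ as a disjoint union of trees $F_1,\dots,F_k$, where $o_i\in V(F_i)$ is the neighbor of $o$ in $F$. By Lemma~\ref{rootedtree} applied on $X$,
\[t((F,o),(X,x_0))=\prod_{i=1}^k\int_X t((F_i,o_i),(X,x_i))\,\mathrm d\deg(x_0)(x_i).\]
By the induction hypothesis, $t((F_i,o_i),(X,x_i))=t((F_i,o_i),(Y,\phi(x_i)))$ for every $x_i\in X$; that is, the function $x\mapsto t((F_i,o_i),(X,x))$ on $X$ equals the composition of $\phi$ with the function $y\mapsto t((F_i,o_i),(Y,y))$ on $Y$. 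Both of these are measurable with values in $[0,1]$ by Proposition~\ref{rootedwelldef}(a). Hence, by the definition of the pushforward measure $\phi_*(\deg(x_0))$ and the standard change-of-variables identity $\int_X (g\circ\phi)\,\mathrm d\nu=\int_Y g\,\mathrm d(\phi_*\nu)$,
\[\int_X t((F_i,o_i),(X,x_i))\,\mathrm d\deg(x_0)(x_i)=\int_Y t((F_i,o_i),(Y,y_i))\,\mathrm d\bigl(\phi_*\deg(x_0)\bigr)(y_i)=\int_Y t((F_i,o_i),(Y,y_i))\,\mathrm d\deg(\phi(x_0))(y_i),\]
where the last equality is exactly the kernel-preserving hypothesis $\phi_*(\deg(x_0))=\deg(\phi(x_0))$. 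Taking the product over $i=1,\dots,k$ and applying Lemma~\ref{rootedtree} on $Y$ gives $t((F,o),(X,x_0))=t((F,o),(Y,\phi(x_0)))$, completing the induction.

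The only point requiring a little care is the applicability of the change-of-variables formula: this needs the integrand $y\mapsto t((F_i,o_i),(Y,y))$ to be a genuinely $\mathcal B$-measurable function on $Y$ (not merely measurable with respect to some completion), which is guaranteed by Proposition~\ref{rootedwelldef}(a), and it needs $\deg(x_0)$ to be a bona fide measure, which is part of the definition of a sub-Markov kernel. No $\sigma$-finiteness is needed here since each $\deg(x_0)$ is a sub-probability measure, so there is no Fubini-type obstruction; the measurability bookkeeping is the main (and only mild) obstacle. One should also note the one implicit convention that $t((F_i,o_i),(X,\cdot))$ does not depend on any function $W$ (Proposition~\ref{rootedwelldef}(c)), which is why the statement makes sense for spaces equipped with only a sub-Markov kernel.
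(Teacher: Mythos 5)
Your proof is correct and follows essentially the same route as the paper's: induction on $\vv(F)$ via the recursion of Lemma~\ref{rootedtree}, the change-of-variables identity for the pushforward measure, and the kernel-preserving hypothesis $\phi_*(\deg(x_0))=\deg(\phi(x_0))$ (the paper merely runs the chain of equalities starting from the $Y$-side rather than the $X$-side). Your added remarks on measurability and on the independence from $W$ are accurate but not points of divergence.
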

\begin{proof}
We use induction on $\vv (F)$. Assume that the Proposition is true for all rooted trees with less than $\vv (F)$ nodes. Let $F-o$ be
the disjoint union of trees $F_1$, \dots, $F_k$ whose nodes adjacent to $o$ in $F$ are $o_1$, \dots, $o_k$ respectively. We have $$
t((F,o), (Y, \phi(x_0)))=\prod_{i=1}^k\int_Yt((F_i,o_i),(Y,y_i))\mathrm d\deg (\phi(x_0))(y_i)$$ by Lemma~\ref{rootedtree}. Here we may replace $\deg (\phi(x_0))$ by $\phi_* (\deg(x_0))$ because $\phi$ is kernel preserving.  But $$\int_Yt((F_i,o_i), (Y, y_i))\mathrm d (\phi_*(\deg(x_0)))(y_i)=\int_Xt((F_i,o_i), (Y, \phi(x_i)))\mathrm d \deg(x_0)(x_i)$$ for all $i$ by the definition of $\phi_*$.  By the induction hypothesis, we may replace $t((F_i,o_i), (Y, \phi(x_i)))$ by $t((F_i,o_i), (X, x_i))$. The Proposition follows by using  Lemma~\ref{rootedtree} again.
\end{proof}

The unrooted tree densities of a probability space with a reversible sub-Markov kernel are defined by formula~\eqref{unrootedformula}.
By Proposition~\ref{unrootedwelldef}, 
they are well defined. 
  That proposition is about pseudo-graphonings, but we can always put $W=0$ to get a pseudo-graphoning.

Simultaneously kernel preserving and measure preserving maps also preserve reversibility and unrooted tree densities: 
\begin{Pro}
If $\mathbf G=
(X, \mathcal A, \kappa, \deg)$ and 
 $\mathbf H=
 (Y,\mathcal B, \la, \deg)$ are probability spaces with sub-Markov kernels on each,   $\phi:X\to Y$ is measurable, kernel-preserving and measure-preserving, and the kernel on $X$ is reversible,  then \begin{itemize}
 \item[(a)] the kernel on $Y$ is reversible, and
 \item[(b)] we have $t(F,\mathbf G)=t(F,\mathbf H)$ for all trees $F$.
 \end{itemize}
 \end{Pro}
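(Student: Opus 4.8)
The plan is to push everything through the map $\phi$ by means of two elementary identities. Kernel-preservation gives, for every $x\in X$ and every $B\in\mathcal B$,
\[\deg_B(\phi(x))=\bigl(\phi_*(\deg(x))\bigr)(B)=\deg(x)\bigl(\phi^{-1}(B)\bigr)=\deg_{\phi^{-1}(B)}(x),\]
so that $\deg_B\circ\phi=\deg_{\phi^{-1}(B)}$ as functions on $X$. Measure-preservation means $\la=\phi_*\kappa$, hence $\int_Y g\,\mathrm d\la=\int_X (g\circ\phi)\,\mathrm d\kappa$ for every bounded measurable $g$ on $Y$. Note also that $\phi^{-1}$ carries $\mathcal B$ into $\mathcal A$, and that all integrands occurring below lie in $[0,1]$ while $\kappa,\la$ are probability measures, so there are no integrability subtleties.

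For part (a), fix $A,B\in\mathcal B$. Using measure-preservation and then the identity above,
\[\int_A\deg_B\,\mathrm d\la=\int_{\phi^{-1}(A)}(\deg_B\circ\phi)\,\mathrm d\kappa=\int_{\phi^{-1}(A)}\deg_{\phi^{-1}(B)}\,\mathrm d\kappa.\]
Since the kernel on $X$ is reversible, \eqref{measurepres} applied to $\phi^{-1}(A),\phi^{-1}(B)\in\mathcal A$ shows the right-hand side equals $\int_{\phi^{-1}(B)}\deg_{\phi^{-1}(A)}\,\mathrm d\kappa$, which, by the same computation run with $A$ and $B$ interchanged, is $\int_B\deg_A\,\mathrm d\la$. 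This is precisely \eqref{measurepres} for $\mathbf H$, so the kernel on $Y$ is reversible.

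For part (b), fix a tree $F$ and any root $o$. By \eqref{unrootedformula} together with measure-preservation,
\[t(F,\mathbf H)=\int_Y t\bigl((F,o),(\mathbf H,y_0)\bigr)\,\mathrm d\la(y_0)=\int_X t\bigl((F,o),(\mathbf H,\phi(x_0))\bigr)\,\mathrm d\kappa(x_0).\]
Because $F$ is a tree, rooted homomorphism densities depend only on the underlying sub-Markov kernel (Proposition~\ref{rootedwelldef}(c)), so $t((F,o),(\mathbf H,\phi(x_0)))=t((F,o),(Y,\phi(x_0)))$ and $t((F,o),(\mathbf G,x_0))=t((F,o),(X,x_0))$; and since $\phi$ is kernel-preserving, Proposition~\ref{tpreserve} gives $t((F,o),(Y,\phi(x_0)))=t((F,o),(X,x_0))$. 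Substituting and invoking \eqref{unrootedformula} once more yields $t(F,\mathbf H)=\int_X t((F,o),(\mathbf G,x_0))\,\mathrm d\kappa(x_0)=t(F,\mathbf G)$.

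No step is a genuine obstacle: the proposition is a bookkeeping exercise assembling the measure- and kernel-preserving properties of $\phi$, Proposition~\ref{tpreserve}, and the reversibility definition. The only points needing a line of care are the identity $\deg_B\circ\phi=\deg_{\phi^{-1}(B)}$ and the remark that $\phi^{-1}$ sends $\mathcal B$ into $\mathcal A$, so that reversibility on $X$ may be applied to $\phi^{-1}(A),\phi^{-1}(B)$.
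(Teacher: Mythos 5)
Your proof is correct and follows essentially the same route as the paper: part (a) is the identical chain of equalities via $\deg_B\circ\phi=\deg_{\phi^{-1}(B)}$ and reversibility on $X$ applied to $\phi^{-1}(A),\phi^{-1}(B)$, and part (b) is the same change of variables under $\phi$ combined with Proposition~\ref{tpreserve}. The extra detail you supply (spelling out the kernel-preservation identity and noting $\phi^{-1}(\mathcal B)\subseteq\mathcal A$) only makes explicit what the paper leaves implicit.
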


\begin{proof} (a) For all $A, B\in\mathcal B$, we have $$\int_A\deg_B\mathrm d\la=\int_{\phi^{-1}(A)} (\deg_B\circ \phi)\mathrm d\kappa=
\int_{\phi^{-1}(A)} \deg_{\phi^{-1}(B)}\mathrm d\kappa,$$
which is symmetric w.r.t.\ $A$ and $B$ because the kernel on $X$ is reversible.

\medskip

(b) We have $$t(F,\mathbf G)=\int_Xt((F,o), (\mathbf G, x))\mathrm d\kappa(x)=\int_Yt((F,o), (\mathbf H, y))\mathrm d\la(y)=t(F,\mathbf H)$$ for all rooted trees $(F,o)$.
 \end{proof}

\subsection{The space of consistent measure sequences}
We now wish to construct a compact metrizable space that, for sequences of large essential girth, will play a  role analogous to that of the space of bounded-degree rooted graphs in the Benjamini--Schramm limit theory \cite[Subsection 18.3]{L}.

Given  a compact metric space $K$, let $\mathcal M(K)$ be the space of Borel measures on $K$ whose total mass is $\le 1$ (i.e., sub-probability measures). This, endowed with the L\'evy--Prokhorov metric, is again a  compact metric space, where convergence is the weak convergence of measures.
A continuous map $f:K\to L$ of compact metric spaces induces  a continuous map $f_*:\mathcal M(K)\to\mathcal M (L)$.

Let $
M_0$ be a point and let $
 M_{r}=\mathcal
  M(
  M_{r-1})$. E.g., $
   M_1\simeq [0,1]$. Let $f_0:
    M_1\to
     M_0$ be the unique map, and let $f_r=(f_{r-1})_*:
      M_{r+1}\to
       M_r$.
A \it consistent sequence \rm is a sequence $$\sigma=(\sigma_r)_{r=0}^\infty\in \prod _{r=0}^\infty 
 M_r$$ such that $f_r(\sigma_{r+1})=\sigma_r$ for all $r$. Let $
  M$ be the set of consistent sequences. This is the inverse limit of the spaces $M_r$ with respect to the maps $f_r$. It is closed in the above product space, and therefore compact. Let $\mathcal B$ be the $\sigma$-algebra of Borel sets in $
   M$.

There is a canonical sub-Markov kernel on $(M, \mathcal B)$. Let $$\tilde A=\{\sigma\in M: \sigma _r\in A\} $$ whenever  $A\subseteq M_r$ is   Borel. Let $$\mathcal A=\{\tilde A:A \;\textrm {Borel in}\;  M_r, r=0,1,\dots\}.$$ This is an algebra of sets, and it generates $\mathcal B$ as a  $\sigma$-algebra.
Define $\deg_{\tilde A}(\sigma)=\sigma_{r+1}(A)$ whenever $A\subseteq  M_r$ is a  Borel set and $\sigma\in M$.
Then $\deg (\sigma)$ is a finite measure on $\mathcal A$, therefore it extends to a unique measure on $\mathcal B$
by the Hahn--Kolmogorov Theorem \cite[Section IV.4]{D}. This defines $\deg : M\times \mathcal B\to [0,1]$. The class of sets  $A\in \mathcal B$ such that $\deg_A:M\to[0,1]$ is measurable contains $\mathcal A$ and is closed under monotone sequential limits, therefore equals  $\mathcal B$. Thus, $\deg$ is a sub-Markov kernel.

This sub-Markov kernel, when viewed as a map $\deg :M\to\mathcal M (M)$, is a  homeomorphism.
Indeed, its inverse is given by projecting a  measure $\sigma\in \mathcal M(M)$ to each $M_r$ to get a  consistent sequence
of measures $\sigma_{r+1}\in \mathcal M(M_r)=M_{r+1}$ which we complete by the unique element $\sigma_0$ of $M_0$. This two-sided inverse map of $\deg$ is continuous and $\mathcal M(M)$ is compact, so $\deg$ is a homeomorphism.

A useful consequence of this is
\begin{Lemma}\label{cont} If $f:M\to[0,1]$ is continuous, then so is $\deg_f:M\to [0,1]$.
\end{Lemma}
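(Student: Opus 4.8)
The plan is to leverage the homeomorphism $\deg\colon M\to\mathcal M(M)$ that was just established before the lemma statement. The point is that $\deg_f$ factors as the composition
\[
M \xrightarrow{\ \deg\ } \mathcal M(M) \xrightarrow{\ \mathrm{ev}_f\ } [0,1],
\qquad \mathrm{ev}_f(\nu)=\int_M f\,\mathrm d\nu ,
\]
and $\deg$ is continuous (being a homeomorphism) when $\mathcal M(M)$ carries the weak, i.e.\ L\'evy--Prokhorov, topology. So the whole task reduces to showing that the evaluation functional $\mathrm{ev}_f$ is continuous on $\mathcal M(M)$ for a fixed continuous $f\colon M\to[0,1]$.

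For that, I would recall that on a compact metric space $K$ the weak topology on sub-probability measures (metrized by the L\'evy--Prokhorov metric) is exactly the one in which $\mu\mapsto\int_K g\,\mathrm d\mu$ is continuous for every $g\in C(K)$; this is immediate from the definition of weak convergence together with the Portmanteau theorem. The only wrinkle is that $\mathcal M(M)$ consists of sub-probability rather than probability measures. One clean way to absorb this is to pass to the one-point extension $M^+=M\sqcup\{*\}$: the map $\nu\mapsto\nu+(1-\nu(M))\delta_*$ is a homeomorphism of $\mathcal M(M)$ onto a closed subspace of the space $\mathcal M_1(M^+)$ of probability measures on $M^+$, and, extending $f$ by $f^+(*)=0$, one has $\int_M f\,\mathrm d\nu=\int_{M^+}f^+\,\mathrm d\bigl(\nu+(1-\nu(M))\delta_*\bigr)$ with $f^+\in C(M^+)$. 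Continuity of $\mathrm{ev}_f$ then follows from continuity of $\mu\mapsto\int_{M^+}f^+\,\mathrm d\mu$ on $\mathcal M_1(M^+)$.

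Putting these together, $\deg_f=\mathrm{ev}_f\circ\deg$ is a composition of continuous maps, hence continuous, which is the claim. There is essentially no hard step here; the only mild bookkeeping point is the sub-probability versus probability distinction, and once that is handled — either by the one-point-extension trick or simply by invoking that $\mathcal M(M)$ is, by construction, topologized so that integration against continuous functions is continuous — the argument is immediate. A sequential phrasing works equally well: if $\sigma^{(n)}\to\sigma$ in $M$, then $\deg(\sigma^{(n)})\to\deg(\sigma)$ weakly in $\mathcal M(M)$ by the homeomorphism property, whence $\deg_f(\sigma^{(n)})=\int_M f\,\mathrm d\deg(\sigma^{(n)})\to\int_M f\,\mathrm d\deg(\sigma)=\deg_f(\sigma)$.
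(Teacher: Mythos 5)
Your proposal is correct and follows essentially the same route as the paper: the paper's proof is exactly your sequential phrasing — take $x_n\to x$, use continuity of $\deg$ to get $\deg(x_n)\to\deg(x)$ weakly, then use continuity of $f$ to conclude $\int_M f\,\mathrm d\deg(x_n)\to\int_M f\,\mathrm d\deg(x)$. Your extra care about sub-probability versus probability measures (the one-point-extension device) is a sound way to justify a step the paper takes for granted, but it does not change the argument.
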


\begin{proof} Let $x_n\to x$ in $M$. Since $\deg$ is continuous, $\deg(x_n)\to \deg (x)$ weakly. Since $f$ is continuous, $$\deg_f(x_n)=\int_Mf\mathrm d\deg (x_n)\to \int_Mf\mathrm d\deg (x)=\deg_f(x).$$
\end{proof}

We now have a sub-Markov kernel on $M$, so rooted tree homomorphism densities of $M$ are defined.

\begin{Lemma}For a  fixed rooted tree $(F,o)$ of radius $\le r$,  the rooted homomorphism density $$t((F,o), (M,\sigma))=t((F,o), (M_r,\sigma_r))$$ depends only on $\sigma_r$, and this dependence is continuous.
\end{Lemma}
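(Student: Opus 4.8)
The plan is to induct on the number $\vv(F)$ of nodes of $F$, using the recursion for rooted tree densities from Lemma~\ref{rootedtree}. First I would observe that the base case $F=K_1$ is immediate: then $t((F,o),(M,\sigma))=1=t((K_1,o),(M_0,\sigma_0))$ depends only on $\sigma_0$ trivially and continuously. For the inductive step, write $F-o$ as a disjoint union of rooted trees $(F_1,o_1),\dots,(F_k,o_k)$ where each $F_i$ has radius $\le r-1$ from $o_i$; then by Lemma~\ref{rootedtree},
\[t((F,o),(M,\sigma))=\prod_{i=1}^k\int_M t((F_i,o_i),(M,\tau))\,\mathrm d\deg(\sigma)(\tau).\]
By the induction hypothesis, each $t((F_i,o_i),(M,\tau))$ equals $t((F_i,o_i),(M_{r-1},\tau_{r-1}))$, i.e.\ depends only on $\tau_{r-1}$, and is continuous in $\tau_{r-1}$, hence continuous in $\tau\in M$.

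Next I would unwind the kernel. By construction $\deg(\sigma)\in\mathcal M(M)$ is the measure whose projection to $M_{r-1}$ is $\sigma_r\in M_r=\mathcal M(M_{r-1})$ — more precisely, writing $\pi_{r-1}:M\to M_{r-1}$ for the canonical projection $\tau\mapsto\tau_{r-1}$, we have $(\pi_{r-1})_*\deg(\sigma)=\sigma_r$ directly from the definition $\deg_{\tilde A}(\sigma)=\sigma_r(A)$ for Borel $A\subseteq M_{r-1}$. Since the integrand depends on $\tau$ only through $\pi_{r-1}(\tau)$, say $t((F_i,o_i),(M,\tau))=g_i(\tau_{r-1})$ with $g_i:M_{r-1}\to[0,1]$ continuous, the change-of-variables formula gives
\[\int_M g_i(\tau_{r-1})\,\mathrm d\deg(\sigma)(\tau)=\int_{M_{r-1}} g_i\,\mathrm d\sigma_r.\]
Therefore $t((F,o),(M,\sigma))=\prod_{i=1}^k\int_{M_{r-1}}g_i\,\mathrm d\sigma_r$ depends only on $\sigma_r$, as claimed. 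The same computation carried out inside $M_r$ rather than $M$ — using that the kernel on $M_r$ likewise has $\deg(\sigma_r)=\sigma_{r}$ pushed to $M_{r-1}$ — identifies this with $t((F,o),(M_r,\sigma_r))$, so the two densities agree; one should spell out that the sub-Markov kernel structure on each $M_r$ is the analogous canonical one (or simply note that $\pi_r:M\to M_r$ is kernel preserving and appeal to Proposition~\ref{tpreserve}, which is the cleanest route and makes the equality $t((F,o),(M,\sigma))=t((F,o),(M_r,\sigma_r))$ automatic).

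For continuity: the map $\sigma\mapsto\sigma_r$ is continuous $M\to M_r$, so it suffices to show $\sigma_r\mapsto\prod_i\int_{M_{r-1}}g_i\,\mathrm d\sigma_r$ is continuous on $M_r=\mathcal M(M_{r-1})$. But weak convergence $\sigma_r^{(n)}\to\sigma_r$ together with continuity (hence boundedness) of each $g_i$ on the compact space $M_{r-1}$ gives $\int g_i\,\mathrm d\sigma_r^{(n)}\to\int g_i\,\mathrm d\sigma_r$, and a finite product of convergent sequences converges; this is exactly the mechanism of Lemma~\ref{cont}. I do not anticipate a serious obstacle here — the only point needing care is bookkeeping about which space the kernel lives on and making sure the "radius $\le r$" hypothesis propagates correctly to "radius $\le r-1$" for the components $F_i$ (a subtree hanging off the root of a radius-$\le r$ tree has radius $\le r-1$ from its own attachment point), and that the projection maps $M\to M_r$ and $M_{r+1}\to M_r$ really are kernel preserving, which is immediate from the defining formula for $\deg$.
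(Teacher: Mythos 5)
Your proof is correct and is essentially the paper's own argument: the paper's entire proof reads ``Induction on $r$, using Lemmas~\ref{rootedtree} and \ref{cont}'', and your induction on $\vv(F)$ together with the identity $(\pi_{r-1})_*\deg(\sigma)=\sigma_r$ and the weak-convergence continuity of $\sigma_r\mapsto\int_{M_{r-1}}g_i\,\mathrm d\sigma_r$ is exactly that argument written out. The one caveat concerns your parenthetical alternative: $M_r$ carries no sub-Markov kernel in the paper's sense (the natural structure sends a point of $M_r$ to a measure on $M_{r-1}$, not on $M_r$), so Proposition~\ref{tpreserve} does not literally apply, and $t((F,o),(M_r,\sigma_r))$ must be read as defined by your direct computation through the tower --- which is how the paper itself uses the notation.
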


\begin{proof}
Induction on $r$, using Lemmas~\ref{rootedtree} and \ref{cont}.\end{proof}
Let $\mathcal T^\bullet$ be the set of rooted trees such that the root has exactly  one neighbor. 
\begin{Pro}\label{tinjective} The map \begin{equation*}\begin{aligned}t:M & \to [0,1]^{\mathcal T^\bullet}\\
\sigma & \mapsto (t((F,o), (M,\sigma)))_{(F,o)\in\mathcal T^\bullet}
\end{aligned}\end{equation*} is a homeomorphism between $M$ and its image $t(M)$.
\end{Pro}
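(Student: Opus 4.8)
Continuity of $t$ is immediate: by the lemma preceding the Proposition, each coordinate $\sigma\mapsto t((F,o),(M,\sigma))$ is continuous, so $t$ maps $M$ continuously into the compact metrizable space $[0,1]^{\mathcal T^\bullet}$ ($\mathcal T^\bullet$ being countable). Since $M$ is compact and $[0,1]^{\mathcal T^\bullet}$ is Hausdorff, a continuous \emph{injection} $M\to[0,1]^{\mathcal T^\bullet}$ is automatically a homeomorphism onto its image. Thus the entire content is the injectivity of $t$.

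I record two elementary facts. First, for any rooted tree $(F,o)$ with $F-o=F_1\sqcup\dots\sqcup F_k$ ($o_i$ adjacent to $o$ in the $i$-th branch), Lemma~\ref{rootedtree} gives $t((F,o),(M,\sigma))=\prod_{i=1}^k t((F_i^+,o),(M,\sigma))$, where $F_i^+$ is the rooted subtree of $F$ spanned by $o$ and the $i$-th branch; note $(F_i^+,o)\in\mathcal T^\bullet$ and $\operatorname{rad}(F_i^+)=\operatorname{rad}_{o_i}(F_i)+1$. Hence the $\mathcal T^\bullet$-densities distinguish $\sigma\ne\tau$ as soon as \emph{some} rooted-tree density does (the one-vertex tree, whose density is constantly $1$, never distinguishes, so the distinguishing tree has $k\ge1$ and the product decomposition applies). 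Second, for $(F,o)\in\mathcal T^\bullet$ with single branch $(F_1,o_1)$, Lemma~\ref{rootedtree} reads $t((F,o),(M,\sigma))=\int_M t((F_1,o_1),(M,x))\,\mathrm d\deg(\sigma)(x)$; when $\operatorname{rad}(F_1)\le r$ the integrand depends only on $x_r$ (preceding lemma), so writing $p_r\colon M\to M_r$, $\sigma\mapsto\sigma_r$, writing $\bar F_1$ for the induced continuous function on the compact set $K_r:=p_r(M)$, and using $(p_r)_*\deg(\sigma)=\sigma_{r+1}$ — which follows directly from $\deg_{\tilde A}(\sigma)=\sigma_{r+1}(A)$ for Borel $A\subseteq M_r$, since $\tilde A=p_r^{-1}(A)$ — we obtain the recursion $t((F,o),(M,\sigma))=\int_{K_r}\bar F_1\,\mathrm d\sigma_{r+1}$. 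Note also that the product of two radius-$\le r$ rooted-tree densities is again one (glue the trees at their roots and apply Lemma~\ref{rootedtree}), and the one-vertex tree gives the constant $1$.

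Now I prove by induction on $r$ the statement $H(r)$: \emph{if $\sigma,\tau\in M$ satisfy $\sigma_r\ne\tau_r$, then $t((F,o),(M,\sigma))\ne t((F,o),(M,\tau))$ for some rooted tree $(F,o)$ with $\operatorname{rad}(F)\le r$.} $H(0)$ is vacuous since $M_0$ is a point. For the step, assume $\sigma_{r+1}\ne\tau_{r+1}$; these are sub-probability Borel measures on $M_r$, both concentrated on $K_r=p_r(M)$ (because $(p_r)_*\deg(\sigma)=\sigma_{r+1}$), hence distinct as measures on the compact space $K_r$. By $H(r)$ the continuous functions $\{\bar F_1:\operatorname{rad}(F_1)\le r\}$ separate the points of $K_r$ (two distinct points of $K_r$ are $p_r(x)\ne p_r(x')$ with $x_r\ne x'_r$); by the second paragraph they span a point-separating subalgebra of $C(K_r)$ containing the constants, hence dense in $C(K_r)$ by the Stone--Weierstrass theorem. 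Therefore some $\bar F_1$ satisfies $\int_{K_r}\bar F_1\,\mathrm d\sigma_{r+1}\ne\int_{K_r}\bar F_1\,\mathrm d\tau_{r+1}$, and by the recursion this is exactly $t((F,o),(M,\sigma))\ne t((F,o),(M,\tau))$ for the associated $(F,o)\in\mathcal T^\bullet$, which has $\operatorname{rad}(F)\le r+1$; this proves $H(r+1)$. Finally, if $\sigma\ne\tau$ in $M$ then $\sigma_r\ne\tau_r$ for some $r$, so $H(r)$ together with the first elementary fact produces a tree in $\mathcal T^\bullet$ on which $t(\sigma)$ and $t(\tau)$ disagree; thus $t$ is injective, and the Proposition follows.

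The main obstacle is the inductive step: keeping the level bookkeeping straight so that one extra integration against $\sigma_{r+1}$ promotes radius-$\le r$ tree densities to radius-$\le r+1$ ones (this rests on $(p_r)_*\deg(\sigma)=\sigma_{r+1}$ and on the preceding lemma that radius-$\le r$ densities factor through $p_r$), and then checking that these radius-$\le r$ densities form a point-separating subalgebra of $C(K_r)$ so that Stone--Weierstrass lets us recover the measure $\sigma_{r+1}$ from its ``tree moments''. Everything else is soft.
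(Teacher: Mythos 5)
Your proof is correct and follows essentially the same route as the paper's: induction on the radius $r$, with the key step being that the $\mathcal T^\bullet_{\le r+1}$-densities of $\sigma$ are exactly the moments of $\sigma_{r+1}$ against the algebra generated by radius-$\le r$ rooted-tree densities, which pins down $\sigma_{r+1}$ once those densities separate points at level $r$. Your only (cosmetic) deviation is applying Stone--Weierstrass directly on $K_r=p_r(M)$ rather than, as the paper does, pushing $\sigma_{r+1}$ forward to the cube $[0,1]^{\mathcal T^\bullet_{\le r}}$ via the injective map $t_r$ and invoking that moments determine measures there.
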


\begin{proof} Since $M$ is compact and $t$ is continuous, it suffices to prove that $t$ is injective.
Let  \begin{equation*}\begin{aligned}t_r:M_r & \to [0,1]^{\mathcal T_{\le r}^\bullet}\\
\sigma & \mapsto (t((F,o), (M,\sigma)))_{(F,o)\in\mathcal T_{\le r}^\bullet},
\end{aligned}\end{equation*} where $\mathcal T_{\le r}^\bullet$ is the set of rooted trees in $\mathcal T^\bullet$ with radius $\le r$. It suffices to prove that $t_r$ is injective for all $r$.
For $r=0$, this holds because $M_0$ is a point. Assume that it holds for $r$. Let us prove it for $r+1$. Suppose that $t_{r+1}(\sigma_{r+1})=t_{r+1}(\sigma'_{r+1})$ for some $\sigma_{r+1}, \sigma_{r+1}'\in M_{r+1}=\mathcal M(M_r)$.
We need to show that $\sigma_{r+1}= \sigma_{r+1}'$.
We have $$(t_r)_*\sigma_{r+1}
\in
\mathcal M
\left([0,1]
^{\mathcal T_{\le r}^\bullet}\right),$$
and similarly for $\sigma_{r+1}'$. Since $t_r$ is injective, it suffices to prove that these two measures on the cube coincide, or, equivalently, their moments coincide. But a  moment of $(t_r)_*\sigma_{r+1}$  is the same thing as  a homomorphism density $t((F,o), (M_{r+1},\sigma_{r+1}))$, where
$(F,o)\in \mathcal T_{\le r+1}^\bullet$. Indeed, if we think of  $F-o$ as a  family of elements of $\mathcal T^\bullet_{\le r}$  that are glued together at their roots (the roots become the node adjacent to $o$ in $F$), and each rooted tree $(T,p)\in\mathcal T^\bullet_{\le r}$ occurs $m(T,p)$ times in this family, then
 $$t((F,o), (M_{r+1},\sigma_{r+1}))=\int_{M_r}\prod_{(T,p)\in \mathcal T^\bullet_{\le r}} t((T,p), (M_r,\sigma_r))^{m(T,p)}\mathrm d\sigma_{r+1}(\sigma_r).$$
\end{proof}

We now show that $M$ is the terminal object in the category of sub-Markov spaces.

\begin{Pro}
Any space 
with a sub-Markov kernel admits a unique kernel preserving map to $M
$.\end{Pro}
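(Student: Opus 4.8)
The statement asserts that $M$ is the terminal object in the category of spaces with a sub-Markov kernel, so the proof is the usual inverse-limit argument; I expect essentially no obstacle, the only point needing care being measurability into a space of measures.

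The plan is to build the map $\phi=(\phi_r)_{r\ge 0}:X\to M$ coordinate by coordinate. First I would let $\phi_0:X\to M_0$ be the unique map to the one-point space, and recursively set $\phi_{r+1}(x)=(\phi_r)_*\bigl(\deg(x)\bigr)\in\mathcal M(M_r)=M_{r+1}$, the pushforward of the sub-probability measure $\deg(x)$ along $\phi_r$. To see that $\phi_{r+1}$ is Borel once $\phi_r$ is, I would use that the Borel $\sigma$-algebra of $\mathcal M(M_r)$ is generated by the evaluations $\nu\mapsto\int_{M_r} g\,\mathrm d\nu$ with $g\in C(M_r)$ (equivalently by $\nu\mapsto\nu(B)$, $B$ Borel), together with the identity $\int_{M_r}g\,\mathrm d\phi_{r+1}(x)=\int_X(g\circ\phi_r)\,\mathrm d\deg(x)=\deg_{g\circ\phi_r}(x)$, which is measurable in $x$ by Corollary~\ref{degf} (after normalizing $g$ into $[0,1]$, using compactness of $M_r$). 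A one-line induction on $r$, using $f_r=(f_{r-1})_*$ and the recursion, yields $f_r\circ\phi_{r+1}=\phi_r$, so $\phi(x)$ is always a consistent sequence; hence $\phi$ maps $X$ into $M$, and it is measurable because $M$ is a Borel (indeed closed) subset of the countable product $\prod_r M_r$, whose Borel structure is the product of the $\mathcal B(M_r)$.

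Next I would check that $\phi$ is kernel preserving, i.e.\ $\phi_*(\deg(x))=\deg(\phi(x))$ for every $x\in X$. Both are finite measures on $(M,\mathcal B)$, so by uniqueness of measure extension it suffices to check agreement on the generating algebra $\mathcal A$ of cylinders $\tilde A$ with $A\subseteq M_r$ Borel. There $\phi^{-1}(\tilde A)=\phi_r^{-1}(A)$, so $\phi_*(\deg(x))(\tilde A)=\deg(x)\bigl(\phi_r^{-1}(A)\bigr)=\phi_{r+1}(x)(A)$ by definition of $\phi_{r+1}$, while $\deg(\phi(x))(\tilde A)=\phi(x)_{r+1}(A)=\phi_{r+1}(x)(A)$ by definition of the canonical kernel on $M$; the two coincide.

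Finally, for uniqueness, I would take any kernel-preserving measurable $\psi:X\to M$ with coordinates $\psi_r$ and prove $\psi_r=\phi_r$ by induction on $r$: the case $r=0$ is trivial, and for the step, for every Borel $A\subseteq M_r$ one computes $\psi_{r+1}(x)(A)=\deg_{\tilde A}(\psi(x))=\deg(\psi(x))(\tilde A)=\psi_*(\deg(x))(\tilde A)=\deg(x)\bigl(\psi_r^{-1}(A)\bigr)=\deg(x)\bigl(\phi_r^{-1}(A)\bigr)=\phi_{r+1}(x)(A)$, using kernel-preservation of $\psi$ and the inductive hypothesis, whence $\psi_{r+1}=\phi_{r+1}$. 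The one genuinely non-formal step is the measurability of $x\mapsto\phi_{r+1}(x)$ as a map into $\mathcal M(M_r)$, which is why I would route it through Corollary~\ref{degf} and the generating evaluations rather than arguing directly.
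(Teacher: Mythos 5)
Your proof is correct, and the existence half coincides with the paper's: the same recursive definition $\phi_{r+1}(x)=(\phi_r)_*(\deg(x))$, the same induction for consistency, and the same verification of kernel preservation by comparing the two measures on the cylinder sets over each $M_r$. Two points differ. First, your uniqueness argument is a direct coordinatewise induction showing that kernel preservation forces $\psi_{r+1}(x)(A)=\deg(x)\bigl(\psi_r^{-1}(A)\bigr)$, hence $\psi_{r+1}=\phi_{r+1}$; the paper instead gets uniqueness ``for free'' from Proposition~\ref{tpreserve} (kernel preserving maps preserve rooted tree densities) combined with Proposition~\ref{tinjective} (the tree-density profile map is injective on $M$). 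Your route is more elementary and self-contained --- it needs nothing about tree densities --- while the paper's route reuses machinery it has already built and makes clear conceptually that a point of $M$ is determined by its rooted tree densities; both are valid. Second, you explicitly verify that each $\phi_{r+1}$ is Borel as a map into $\mathcal M(M_r)$, via the generating evaluations $\nu\mapsto\int g\,\mathrm d\nu$ and Corollary~\ref{degf}; the paper's proof passes over this in silence even though measurability of $\phi$ is part of the definition of a kernel preserving map, so this is a worthwhile addition rather than a detour.
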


\begin{proof}
Uniqueness is immediate from Propositions~\ref{tpreserve} and \ref{tinjective}.

To prove existence, let $(X, \mathcal B, \deg)$ be a space with a sub-Markov kernel.
We construct a kernel preserving map $\sigma : X\to M$. Let $\sigma_0:X\to M_0$ be the unique map. If $\sigma_r:X\to M_r$ is already defined, then put $$\sigma_{r+1}(x)= (\sigma_r)_*(\deg (x))\in \mathcal M(M_r)=M_{r+1}$$ for all $x\in   X$. Let $\sigma(x)=(\sigma_r(x))_{r=0}^\infty$. This is a consistent sequence, i.e., $f_r(\sigma_{r+1}(x))=\sigma_r(x)$ for all $r$. We show this by induction on $r$. It is true for $r=0$ because both sides are elements of the singleton $M_0$. Let us assume it is true for $r-1$.  Then it is true for $r$ because $$f_r(\sigma_{r+1}(x))=(f_{r-1}\circ\sigma_r)_*(\deg (x))=(\sigma_{r-1})_*(\deg (x))=\sigma_r(x).$$

Thus, we have $\sigma: X\to M$. We must prove that the map $\sigma$ is kernel preserving, i.e., $\sigma_*(\deg(x))=\deg(\sigma(x))$ for all $x$. It suffices to show that these two measures on $M$ coincide when pushed down to $M_r$, for all $r$. Using the definition of $\deg(x)$ on the right hand side, this amounts to $(\sigma_r)_*(\deg(x))=\sigma_{r+1}(x)$. This is true by the very  definition of $\sigma_{r+1}(x)$.
\end{proof}

Let us now examine 
probability measures on $(M,\mathcal B)$ that make the canonical kernel $\deg$ reversible, i.e.,  involution-invariant measures. 
These are  analogous to a  basic concept in the Benjamini--Schramm graph limit theory: involution-invariant probability distributions on the space of rooted graphs with a  degree bound.

\begin{Pro} The set of involution-invariant probability measures $\la$ on $(M,\mathcal B)$ 
 is  closed under affine combinations that are nonnegative measures, and is closed  in the weak topology.
\end{Pro}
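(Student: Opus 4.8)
The statement about affine combinations is pure linearity, so I would dispatch it first. If $\la_1,\dots,\la_k$ are involution-invariant probability measures, $c_1,\dots,c_k\in\R$ with $\sum_i c_i=1$, and $\la=\sum_i c_i\la_i$ happens to be a nonnegative measure, then $\la$ is automatically a probability measure; and since $\deg_A,\deg_B$ are bounded measurable functions, $\int_A\deg_B\,\mathrm d\la=\sum_i c_i\int_A\deg_B\,\mathrm d\la_i=\sum_i c_i\int_B\deg_A\,\mathrm d\la_i=\int_B\deg_A\,\mathrm d\la$ for all $A,B\in\mathcal B$, which is the measure preserving property~\eqref{measurepres} for $\la$.

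For weak closedness I would use that the space of probability measures on the compact metric space $M$ is itself compact and metrizable in the weak topology, so it suffices to take a weakly convergent sequence $\la_n\to\la$ of involution-invariant measures and show that $\la$ is involution-invariant. The obstacle is that $\deg_B$ need not be continuous for a general Borel set $B$, so one cannot simply pass to the limit in $\int_A\deg_B\,\mathrm d\la_n=\int_B\deg_A\,\mathrm d\la_n$; instead I would test against continuous functions only. The key input is Lemma~\ref{cont}: for any continuous $g\colon M\to\R$ the function $x\mapsto\int_M g\,\mathrm d\deg(x)$ is continuous (decompose $g=a+bg_0$ with $g_0\colon M\to[0,1]$ continuous and apply Lemma~\ref{cont} to $g_0$ and to the constant function $1$). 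Hence for continuous $f,g\colon M\to\R$ the function $h(x)=f(x)\int_M g\,\mathrm d\deg(x)-g(x)\int_M f\,\mathrm d\deg(x)$ is continuous and bounded on the compact space $M$. By Corollary~\ref{pseudostronger} (equivalently Lemma~\ref{crucial}), extended from $[0,1]$-valued to bounded measurable functions by linearity, reversibility of $\deg$ with respect to $\la_n$ gives $\int_M h\,\mathrm d\la_n=0$ for every $n$, and passing to the weak limit yields $\int_M h\,\mathrm d\la=0$; that is, $\int_M f(x)\bigl(\int_M g\,\mathrm d\deg(x)\bigr)\mathrm d\la(x)=\int_M g(x)\bigl(\int_M f\,\mathrm d\deg(x)\bigr)\mathrm d\la(x)$ for all continuous $f,g$.

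It remains to upgrade this to~\eqref{measurepres} for $\la$, and here I would invoke the measure-theoretic reformulation noted after Definition~\ref{pseudo-graphoning}: \eqref{measurepres} for $\la$ is equivalent to the coincidence of the two finite Borel measures $\nu,\nu'$ on $M\times M$ determined on rectangles by $\nu(A\times B)=\int_A\deg_B\,\mathrm d\la$ and $\nu'(A\times B)=\int_B\deg_A\,\mathrm d\la$ (namely the joint law of $(x,y)$ with $x$ drawn from $\la$ and $y$ from $\deg(x)$, and its image under the coordinate swap). The identity just established says precisely that $\int_{M\times M}f(x)g(y)\,\mathrm d\nu=\int_{M\times M}f(x)g(y)\,\mathrm d\nu'$ for all continuous $f,g$, so $\nu$ and $\nu'$ agree on the linear span of the functions $(x,y)\mapsto f(x)g(y)$. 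That span is a subalgebra of $C(M\times M)$ which contains the constants and separates points, hence is dense by the Stone--Weierstrass theorem; since $M\times M$ is compact metric, two Borel measures that agree on all of $C(M\times M)$ are equal, so $\nu=\nu'$ and $\la$ is involution-invariant. The one genuinely delicate point in the whole argument is the passage from Borel test sets to continuous test functions, which is made possible exactly by Lemma~\ref{cont} together with the compactness of $M$.
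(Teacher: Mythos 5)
Your proof is correct, and its first half (linearity for affine combinations, and the passage to the weak limit of $\int_M (f\cdot\deg_g-g\cdot\deg_f)\,\mathrm d\la_n=0$ for continuous $f,g$ via Lemma~\ref{cont}) coincides with the paper's argument. Where you diverge is the final upgrade from continuous test functions to the full measure preserving property~\eqref{measurepres}. The paper stays in one variable: for fixed continuous $g$ it observes that the class of measurable $f:M\to[0,1]$ satisfying~\eqref{fg} is closed under monotone pointwise limits and contains all continuous functions, hence contains all measurable $f$ (a Baire-class/monotone class argument on the metric space $M$), and then repeats this in $g$. You instead pass to the product space: you encode the two sides of~\eqref{measurepres} as the finite Borel measures $\nu$ and $\nu'$ on $M\times M$ (exactly the two measures the paper itself introduces when defining involution-invariance), note that agreement on products $f(x)g(y)$ of continuous functions plus Stone--Weierstrass and the Riesz representation on the compact metric space $M\times M$ forces $\nu=\nu'$, and read off~\eqref{measurepres} on rectangles. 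Both routes are sound; yours trades the iterated monotone class argument for a single soft functional-analytic step, at the cost of having to justify that $\nu$ is a genuine Borel measure on $M\times M$ (which follows from Lemma~\ref{stronger} and monotone convergence, and which the paper has already implicitly granted). Your reduction of weak closedness to sequential closedness is also legitimate, since the weak topology on sub-probability measures over the compact metric space $M$ is metrizable via the L\'evy--Prokhorov metric, as the paper uses throughout.
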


\begin{proof}
The measure preserving condition is linear in $\la$, hence remains true for affine combinations.

To prove closedness under weak limits, let $\la_n$ satisfy the measure preserving equation for $n=1,2,\dots$, and let $\la_n\to\la$ weakly.
We prove the equality~\eqref{fg} for $\la$. Using Lemma~\ref{cont}, we get the equality 
  for continuous $f$ and $g$. For a fixed continuous $g$, the class  of measurable $f:M\to [0,1]$ for which the equality holds is closed under monotone pointwise limits by the Monotone Convergence Theorem, thus this class contains all measurable $f$. The same argument for fixed measurable $f$ and varying $g$ finishes the proof.
\end{proof}

We are ready for the main result of this section.
\begin{Th}\label{unique}
Let $(G_n,d_n)$ be an admissible sequence such that $t(F,G_n,d_n)$ converges for all trees $F$. Then there is a unique involution-invariant Borel probability measure $\la$ on $M$ such that
$t(F,G_n,d_n)\to t(F,
 (M,\mathcal B,\la, \deg))$ for all trees  $F$ as $n\to\infty$.
\end{Th}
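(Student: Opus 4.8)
The plan is to realize each admissible pair as a probability space carrying a reversible sub-Markov kernel, transport the uniform measure to the universal space $M$ via the terminal property, and then take a weak limit; uniqueness will come from the injectivity in Proposition~\ref{tinjective} combined with a moment argument.

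For existence, attach to $(G_n,d_n)$ the probability space $X_n=V(G_n)$ with uniform measure $\la_n$ and the sub-Markov kernel $\deg_n(v)(A)=|N(v)\cap A|/d_n$. This is the degree kernel of the graphoning associated to $(G_n,d_n)$ at the end of Remark~\ref{graphgraphoning}, hence reversible w.r.t.\ $\la_n$, and since tree densities do not depend on $W$, its unrooted tree densities are exactly $t(F,G_n,d_n)$. By the terminal property of $M$ there is a (unique) kernel-preserving map $\sigma^{(n)}\colon X_n\to M$; it is trivially measurable and, by definition of the push-forward, measure-preserving onto $\mu_n:=\sigma^{(n)}_*\la_n$. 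The Proposition on simultaneously kernel- and measure-preserving maps then shows that $\mu_n$ is involution-invariant and $t(F,(M,\mathcal B,\mu_n,\deg))=t(F,G_n,d_n)$ for every tree $F$. Since $M$ is compact metrizable, the space of Borel probability measures on $M$ is weakly compact, so some subsequence $\mu_{n_k}$ converges weakly to a probability measure $\la$, which is involution-invariant because that class is weakly closed. For a tree $F$ with at least two nodes pick a leaf root $o$ (the case $F=K_1$ being trivial); the function $\sigma\mapsto t((F,o),(M,\sigma))$ is continuous on $M$ by the Lemma preceding Proposition~\ref{tinjective}, so weak convergence gives
\[t(F,G_{n_k},d_{n_k})=\int_M t((F,o),(M,\sigma))\,\mathrm d\mu_{n_k}\longrightarrow \int_M t((F,o),(M,\sigma))\,\mathrm d\la=t(F,(M,\mathcal B,\la,\deg)).\]
As $t(F,G_n,d_n)$ converges by hypothesis, its limit equals $t(F,(M,\mathcal B,\la,\deg))$, which is what we want.

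For uniqueness, let $\la,\la'$ be involution-invariant probability measures on $M$ with $t(F,(M,\la,\deg))=t(F,(M,\la',\deg))$ for all trees $F$. The key point is that every mixed moment of the coordinate functions $\sigma\mapsto t((F_j,o_j),(M,\sigma))$, $(F_j,o_j)\in\mathcal T^\bullet$, is itself an unrooted tree density: writing $T_j=F_j-o_j$ rooted at the neighbour $p_j$ of $o_j$, and letting $G$ be the tree obtained by attaching a fresh root $o$ to the roots $p_j$ of disjoint copies of the $T_j$, Lemma~\ref{rootedtree} gives $t((G,o),(M,\sigma))=\prod_j t((F_j,o_j),(M,\sigma))$, so that $\int_M\prod_j t((F_j,o_j),(M,\sigma))\,\mathrm d\la(\sigma)=t(G,(M,\la,\deg))$, and likewise for $\la'$. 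Hence $\la$ and $\la'$ integrate every polynomial in finitely many coordinate functions to the same value. These coordinate functions separate points of $M$ by the injectivity part of Proposition~\ref{tinjective}, so by Stone--Weierstrass the unital algebra they generate is dense in $C(M)$; therefore $\int f\,\mathrm d\la=\int f\,\mathrm d\la'$ for all $f\in C(M)$, and $\la=\la'$ by the Riesz representation theorem.

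The main obstacle is the uniqueness half: one has to recognize that the joint law of the tree-density coordinates — which, by Proposition~\ref{tinjective}, determines a measure on $M$ — can be reconstructed from \emph{unrooted} tree densities alone, and this is precisely what the gluing construction together with Lemma~\ref{rootedtree} delivers. The existence half is a routine compactness argument once the push-forward measures $\mu_n$ are set up; the only delicate point there is the continuity of the test functions $\sigma\mapsto t((F,o),(M,\sigma))$, which is exactly the content of the Lemma before Proposition~\ref{tinjective}. (As a byproduct, uniqueness shows that the whole sequence $\mu_n$, not merely a subsequence, converges weakly to $\la$.)
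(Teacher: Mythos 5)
Your proposal is correct and follows essentially the same route as the paper: for existence, push the uniform measures forward to $M$ via the terminal (kernel-preserving) map from the graphoning of Remark~\ref{graphgraphoning} and pass to a weak limit; for uniqueness, observe via the gluing argument and Lemma~\ref{rootedtree} that the mixed moments of the coordinate functions $t((F,o),(M,\cdot))$ are unrooted tree densities, and invoke the injectivity of $t$ from Proposition~\ref{tinjective}. The only cosmetic differences are that the paper transports everything to $[0,1]^{\mathcal T^\bullet}$ and argues with moment convergence there, whereas you work directly on $M$ with Prokhorov compactness and Stone--Weierstrass.
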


\begin{proof}
To prove uniqueness, observe that if $\la$ and $\la'$ both have the desired property, then $t(F,\mathbf G)=t(F,\mathbf G')$ for all trees $F$, whence the measures $$t_*\la, t_*\la'\in \mathcal M\left( [0,1]^{\mathcal T^\bullet} \right)$$ have the same moments, so they coincide. Thus, $\la=\la'$.

To prove existence, consider the graphoning $\mathbf G_n=\left(V(G_n), \mathcal P(V(G_n)), \la_n, \mu_n, W_n\right)$ corresponding to $(G_n,d_n)$ by Remark~\ref{graphgraphoning}. Push $\la_n$ forward to $M$ using the unique degree preserving map $\mathbf G_n\to M$. Then push it further to $
[0,1]^{\mathcal T^\bullet}$ using $t$. The resulting sequence of probability measures converges weakly because all  moments converge. The weak limit is a probability measure $\la$ concentrated on $t(M)$ which, when pulled back to $M$ using $t^{-1}:t(M)\to M$, has the desired properties.
\end{proof}
There is a corresponding version of the Aldous--Lyons Conjecture:
\begin{?} Is it true that for every involution-invariant Borel probability measure $\la $ on $M$ there exists a convergent sequence $(G_n,d_n)$ of large essential girth such that
$t(F,G_n,d_n)\to t(F,(M,\mathcal B,\la, \deg))$ for all trees  $F$ as $n\to\infty$ ?
\end{?}
In the Benjamini--Schramm case, the  affirmative answer was proved by G.\ Elek~\cite{E2}.

If $(G_n,d_n)$ is a convergent sequence of large essential girth, then the tree densities carry all the information, so the pseudo-graphoning $$\mathbf G=(M,\mathcal B,\la, W=0, \deg),$$ where $\la$ is given by  Theorem~\ref{unique}, is  a limit  for the sequence. This  may be unsatisfactory: we might want  large essential girth to be reflected in the acyclicity of  the kernel $\deg$ rather than  only in the fact that $W=0$ (because an acyclic $\deg$ would give us some hope of finding a different $W$ that would turn $\mathbf G$ into a true  graphoning with unchanged homomorphism densities). This is easy to achieve, as we explain below. The price to pay is that the new probability measure and reversible sub-Markov kernel will not be on the space $M$, and we lose uniqueness.

If $(X,\mathcal A)$ and $(Y,\mathcal B)$ are two measurable spaces with a sub-Markov kernel on each one, then we get a sub-Markov kernel on $(X\times Y, \mathcal A\otimes \mathcal B)$ by defining the measure $\deg (x,y)$ to be the product of the measures $\deg(x)$ and $\deg(y)$. Then $\deg^k(x,y)$ is the
product of the measures $\deg^k(x)$ and $\deg^k(y)$ for all $k$. Thus, if $\deg^k(x)\perp\deg(x)$, then $\deg^k(x,y)\perp\deg(x,y)$. 

A product of reversible kernels given on two probability spaces  is clearly reversible on the product space. The homomorphism density of any tree in the product will be the product of the densities in the factors.

Thus, we can multiply any probability space endowed with a reversible sub-Markov kernel  by either one of the many acyclic 1-regular graphonings constructed in Subsection~\ref{limcubeproj} to get an acyclic space with unchanged  tree densities.  This proves
\begin{Th}\label{acyclicthm} 
 Let $(G_n,d_n)$ be an admissible sequence such that $t(F,G_n,d_n)$ converges for all trees $F$. Then there exists a probability space  $\mathbf G$ endowed with an acyclic reversible sub-Markov kernel such that $t(F,G_n,d_n)\to t(F,\mathbf G)$ for all trees $F$.
\end{Th}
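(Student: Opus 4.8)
The plan is to combine Theorem~\ref{unique} with the acyclic $1$-regular graphonings produced in Lemma~\ref{constr}. First I would invoke Theorem~\ref{unique} to obtain the involution-invariant Borel probability measure $\la$ on $M$ with $t(F,G_n,d_n)\to t(F,(M,\mathcal B,\la,\deg))$ for every tree $F$; write $\mathbf M=(M,\mathcal B,\la,\deg)$ for the resulting probability space with reversible sub-Markov kernel. There is no reason for $\mathbf M$ to be acyclic, so the task is to modify it without disturbing tree densities.

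Next I would fix any acyclic $1$-regular Hausdorff graphoning. By Lemma~\ref{constr}(a), applied with $\alpha=1$ and any admissible gauge function (for instance $h(x)=1/\log_2(1/x)$ near $0$, which is continuous, non-decreasing, vanishes at $0$, and satisfies $h(x)/x\to\infty$), such a graphoning exists with $\{0,1\}$-valued edge function. Discarding that edge function and the (in general non-$\sigma$-finite) measure $\mu$, I am left with a probability space $(Y,\mathcal C,\kappa)$ carrying a reversible, acyclic, $1$-regular sub-Markov kernel $\deg_{\mathbf H}$; by Proposition~\ref{alphareggraphoning} we then have $t(F,\mathbf H)=1$ for every tree $F$, where $\mathbf H=(Y,\mathcal C,\kappa,W=0,\deg_{\mathbf H})$.

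Then I would form the product $\mathbf G=(M\times Y,\ \mathcal B\otimes\mathcal C,\ \la\times\kappa,\ \deg)$, where $\deg(x,y)$ is the product measure $\deg(x)\otimes\deg(y)$ on $M\times Y$. Three routine verifications are needed. (i) $\deg$ is a reversible sub-Markov kernel: measurability of $\deg_A$ for a general $A\in\mathcal B\otimes\mathcal C$ follows by a monotone-class argument starting from product sets, and reversibility of a product of reversible kernels on probability spaces is immediate from Fubini for the finite product measure $\la\times\kappa$, since the identity \eqref{measurepres} need only be checked on sets of the form $A\times B$. (ii) $\mathbf G$ is acyclic: an induction on $k$ using the iteration formula \eqref{degk} gives $\deg^k(x,y)=\deg^k(x)\otimes\deg^k(y)$, and for $k\ne 1$, writing for a fixed $y$ a decomposition $Y=A\cup B$ with $\deg^k_A(y)=\deg_B(y)=0$ (possible since $\deg^k(y)\perp\deg(y)$ for $\kappa$-a.e.\ $y$), we get $\deg^k_{M\times A}(x,y)=\deg^k_M(x)\deg^k_A(y)=0$ and $\deg_{M\times B}(x,y)=0$, so $\deg^k(x,y)\perp\deg(x,y)$ for $(\la\times\kappa)$-a.e.\ $(x,y)$. (iii) Tree densities multiply: an induction on $\vv(F)$ via the recursion of Lemma~\ref{rootedtree}, together with the fact that integrating a product $f(x_i)g(y_i)$ against $\deg(x_0)\otimes\deg(y_0)$ factors, shows $t((F,o),(\mathbf G,(x_0,y_0)))=t((F,o),(\mathbf M,x_0))\cdot t((F,o),(\mathbf H,y_0))$; integrating over $(x_0,y_0)$ with respect to $\la\times\kappa$ (Fubini is legitimate here, all integrands lying in $[0,1]$) yields $t(F,\mathbf G)=t(F,\mathbf M)\cdot t(F,\mathbf H)=t(F,\mathbf M)$ for every tree $F$. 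Hence $t(F,G_n,d_n)\to t(F,\mathbf M)=t(F,\mathbf G)$, which is the assertion.

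I expect no serious obstacle: the conceptual content is entirely carried by Theorem~\ref{unique} and Lemma~\ref{constr}. The one point that deserves care is that the product construction must be performed at the level of the probability spaces and their reversible sub-Markov kernels — so that Fubini applies to $\la\times\kappa$ — rather than at the level of the graphoning measures $\mu$, which need not be $\sigma$-finite; this is precisely why one first passes to pseudo-graphonings with $W=0$. The remaining work is the bookkeeping in steps (i)--(iii), in particular the singular-measure manipulation in (ii) and the monotone-class verification of measurability.
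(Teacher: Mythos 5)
Your proposal is correct and follows essentially the same route as the paper: take the reversible sub-Markov space on $M$ furnished by Theorem~\ref{unique}, multiply it by an acyclic $1$-regular space obtained from Lemma~\ref{constr} (whose tree densities are all $1$ by Proposition~\ref{alphareggraphoning}), and use that product kernels are reversible, that $\deg^k$ of a product is the product of the $\deg^k$'s (hence singularity in one factor gives acyclicity of the product), and that tree densities multiply. The paper states these three product facts without proof; your steps (i)--(iii) simply supply the routine verifications.
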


\section{Regularity lemma?}
To conclude, we briefly speculate on one of the questions involved in Problem~\ref{problem}(a):
 does every convergent sequence $(G_n,d_n)$ tend to a  pseudo-graphoning $\mathbf G$ ? 
 If $(G_n,d_n)$ has large essential girth, the answer is in the affirmative by Theorem~\ref{unique}: choose $W=0$.
In general, the proof of an affirmative answer might involve an appropriate version of Szemer\'edi's Regularity Lemma. The very weak version below is unlikely to suffice.
\begin{Pro}\label{Alon}
For any family $\mathcal G$ of admissible pairs $(G,d)$, 
and for any $\epsilon>0$, there exists an $N$ such that for every
$(G,d)\in\mathcal G$ there exists $(G',d')\in \mathcal G$ with $\vv(G')\le N$, $d'\le N$, and $$|t(F,G,d)-t(F,G',d')|<\epsilon$$ for all $F$ with $\vv(F)\le 1/\epsilon$.
\end{Pro}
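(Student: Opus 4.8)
The plan is to recognize Proposition~\ref{Alon} as a soft compactness statement and prove it by a finite $\epsilon$-net argument in a finite-dimensional cube; there is no combinatorics involved. First I would fix $\epsilon>0$ and let $\mathcal F$ be the (finite) set of isomorphism types of graphs $F$ with $\vv(F)\le 1/\epsilon$. Since $t(F,G,d)\in[0,1]$ for every admissible pair by Definition~\ref{t}, each admissible pair $(G,d)$ determines a point $\tau(G,d)=\bigl(t(F,G,d)\bigr)_{F\in\mathcal F}$ of the compact metric space $[0,1]^{\mathcal F}$ (a finite product of copies of $[0,1]$), which I would equip with the supremum metric, so that two points being within distance $\epsilon$ is precisely the inequality $|t(F,G,d)-t(F,G',d')|<\epsilon$ for all $F$ with $\vv(F)\le 1/\epsilon$.

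Next I would consider the image $S=\{\tau(G,d):(G,d)\in\mathcal G\}\subseteq[0,1]^{\mathcal F}$. As a subset of a compact metric space it is totally bounded, so it can be covered by finitely many open balls of radius $\epsilon/3$. Throwing away those balls disjoint from $S$ and selecting one member of $S$ inside each of the remaining balls produces finitely many pairs $(G_1,d_1),\dots,(G_m,d_m)\in\mathcal G$ that form an $\epsilon$-net of $S$ and, crucially, themselves belong to $\mathcal G$: indeed, for $s\in S$ lying in the ball around some center $c_j$, the point $s_j\in S$ chosen in that ball satisfies $\|s-s_j\|_\infty\le\|s-c_j\|_\infty+\|c_j-s_j\|_\infty<\epsilon/3+\epsilon/3<\epsilon$. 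Then $N:=\max_{1\le i\le m}\max\bigl(\vv(G_i),d_i\bigr)$ (taking $N=1$, say, in the vacuous case $\mathcal G=\emptyset$) works: given $(G,d)\in\mathcal G$, pick $i$ with $\|\tau(G,d)-\tau(G_i,d_i)\|_\infty<\epsilon$ and set $(G',d')=(G_i,d_i)$, which lies in $\mathcal G$, has $\vv(G')\le N$ and $d'\le N$, and satisfies the required density estimate.

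I do not expect a genuine obstacle; the only point requiring a little care is that the net must be pulled into $\mathcal G$ itself (the conclusion insists $(G',d')\in\mathcal G$), which is why I cover $S$ rather than its closure and then replace each center by a nearby member of $S$. This care is also exactly what makes the Proposition ``very weak'': it asserts only the existence, \emph{within} $\mathcal G$, of a bounded-size pair with nearly the same homomorphism-density profile on small test graphs, with no control over how $G'$ relates to $G$ — no vertex identification, no sampling, no structural construction. A regularity lemma strong enough to settle Problem~\ref{problem}(a) would instead have to \emph{produce} $(G',d')$ from $(G,d)$ by an explicit operation, and that is precisely the content missing here.
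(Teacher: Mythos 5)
Your proof is correct and is exactly the argument the paper intends: the paper gives no proof of its own but cites Noga Alon's compactness proof of \cite[Proposition 19.10]{L}, which is precisely your finite $\epsilon$-net argument in $[0,1]^{\mathcal F}$, with the net points pulled back into $\mathcal G$ and $N$ taken as the maximum of $\vv(G_i)$ and $d_i$ over the finitely many representatives. No gaps.
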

In the Benjamini--Schramm setting, i.e., when $\mathcal G$ is the family of pairs $(G,d)$ such that  $d$ is a fixed degree bound, this is equivalent to
\cite[Proposition 19.10]{L}, which answered a question of L.\   Lov\'asz. The very simple proof by Noga Alon carries over easily to Proposition~\ref{Alon}.

\section*{Acknowledgements}
Many thanks to Mikl\'os Ab\'ert for believing in this project, and for stimulating discussions. I am grateful to P\'eter Csikv\'ari, Viktor Kiss,   Bal\'azs Szegedy, and G\'abor Tardos for 
 useful comments. 

\end{document}